\newtheorem{theorem}{Theorem}[subsection]
\newtheorem*{theorem*}{Theorem}
\newtheorem{lemma}[theorem]{Lemma}
\newtheorem{corollary}[theorem]{Corollary}
\newtheorem{proposition}[theorem]{Proposition}
\theoremstyle{definition}
\newtheorem{definition}[theorem]{Definition}
\def\p{6} % <-- change p here
\newcommand{\cO}{\mathcal{O}}
\DeclareMathOperator{\Sym}{Sym}
\keywords{Koszul cohomology, canonical curves, group actions, automorphisms, equivariant Euler characteristics, Schur functors, virtual representations}
\subjclass[2020]{Primary 14H37; Secondary 13D02, 14F43, 20C15, 14C40}
\date{\today}
\title{Equivariant Koszul Cohomology of Canonical Curves}
  \author[K. Karagiannis]{Kostas Karagiannis}
  \address{Department of Mathematics, National and Kapodistrian  University of Athens
  Pane\-pist\-imioupolis, 15784 Athens, Greece}
  \email{konstantinos.v.karagiannis@gmail.com}
  \author[A. Kontogeorgis]{Aristides Kontogeorgis}
  \address{Department of Mathematics, National and Kapodistrian  University of Athens
  Pane\-pist\-imioupolis, 15784 Athens, Greece}
  \email{kontogar@math.uoa.gr}
  \author[K. Manousou Sotiropoulou]{Konstantia Manousou Sotiropoulou}
  \address{Department of Mathematics, National and Kapodistrian  University of Athens
  Pane\-pist\-imioupolis, 15784 Athens, Greece}
  \email{komanous@math.uoa.gr}
\date \today
\newcommand{\aprod}{\mathop{\operator@font \hbox{\Large$\ast$}}}
\begin{document}

\begin{abstract}
This paper investigates the representation-theoretic structure of the Koszul cohomology of a smooth projective variety $X$ over an algebraically closed field $k$, admitting an action of a finite group $G$ of order coprime to ${\rm char}(k)$. Properties of $G$-equivariant functors are employed to show that the associated Koszul complex is a complex of $kG$-modules, and to generalize known dimension formulas to identities between virtual representations. In the case of canonical curves, explicit formulas are obtained by combining the theory of equivariant Euler characteristics and equivariant Riemann-Roch theorems with that of generating functions for Schur functors.
\end{abstract}

\maketitle

 %\tableofcontents

%
%
%
\section{Introduction}

\subsection{Motivation and historical background}
The genealogy of the Koszul complex begins with Hilbert’s foundational 1890 paper \cite{MR1510634}, where it appears implicitly in his proof of the famous syzygy theorem that bears his name. However, its formal algebraic structure was not codified until Jean-Louis Koszul’s 1950 paper \cite{MR42428}. Though his primary motivation was topological, Koszul’s treatment provided the abstract definitions and results that characterize the modern understanding of the complex today.

The modern era of the theory began with Mark Green, who introduced the geometric counterpart of the Koszul complex \cite{MR739785} and demonstrated that geometric properties of projective varieties could be reinterpreted through homological vanishing theorems. Building on this foundation, Green and Lazarsfeld extended several classical results concerning syzygies of projective varieties into the language of Koszul cohomology, see the surveys \cite{MR1082354} and \cite{MR1082360}. Subsequent breakthroughs eventually resolved influential conjectures on the subject, leading to a period of significant growth for the field; see the preface of \cite{MR2573635} for a detailed account of this expansion and its various applications. Despite the depth of these developments, a direction that remains largely unexplored is the extent to which these results extend to the generalized setting where a finite group is permitted to act on the variety.

The paradigm of studying varieties through associated vector spaces naturally evolves into the study of group representations when a finite group acts on the variety via automorphisms. In this setting, the $G$-module structure of these spaces provides the key to understanding the underlying action. This is most classically illustrated by smooth projective curves, where the dictionary between geometry and function fields allows one to apply the machinery of Galois theory. By utilizing global-to-local methods, such as ramification theory and the study of decomposition groups, one can recover global geometric properties from the behavior of these field extensions.

Building on this perspective, the main aim of this paper is to investigate the behavior of the Koszul complex and its cohomology in the equivariant setting, where a finite group $G$ acts on a variety $X$. Our focus lies primarily on the case of smooth projective curves, with particular emphasis on canonical curves. We seek to explore the interplay between the geometry of the group action and the representation-theoretic structure of the Koszul cohomology groups, with the goal of obtaining results in both directions.

\subsection{Outline of the paper} 
In Section 2, we consider a smooth irreducible scheme $X$ over an algebraically closed field $k$ acted upon by a finite group $G$ whose order is coprime to the characteristic of $k$. After reviewing the definitions of $G$-equivariant quasicoherent sheaves and $G$-equivariant functors, we establish in Proposition \ref{prop:G-functors} a general criterion for a functor to be $G$-equivariant. This allows us to verify in Proposition \ref{prop:exGfunctors} and Proposition \ref{prop:contrprop} that the constituent operations in Green and Lazarsfeld’s functorial construction of the Koszul complex are $G$-equivariant, while endowing them with canonical $G$-structures derived solely from the action on $X$. This framework culminates in Theorem \ref{th:main}, in which we not only show that the Koszul complex is a complex of $kG$-modules, but also express the equivariant structure of its cohomology in terms of Zariski cohomology, lifting the classical construction of Green and Lazarsfeld into the equivariant setting.

In Section 3, we focus on canonical curves and provide a formula for differences of Koszul representations in Corollary \ref{cor:bettivanish}, motivated by the non-equivariant setting where such differences are explicitly computable via Riemann–Roch. Within the equivariant framework, this formula naturally splits into two distinct components. The first directly relates to the theory of equivariant Euler characteristics; to evaluate it, we use the equivariant Riemann–Roch theorem of Ellingsrud and Lønsted (Theorem \ref{th:EL}) and provide in Theorem \ref{th:mainwedge} an explicit decomposition into a direct sum of irreducible representations. The second component remarkably turns out to be related to the theory of Schur functors, which are central to the representation theory of $\text{GL}_n$ and are intimately connected to the combinatorics of Young tableaux. Since closed-form expressions for irreducible decompositions of such modules are unattainable due to the notorious difficulty of the plethysm problem, we provide explicit generating functions both for the character values of this module in Theorem \ref{th:genchars} and for its class in the representation ring of $G$ in Theorem \ref{th:Molien}. Finally, in Section \ref{sec:Kummer}, we apply the above mentioned formulas to get explicit values in the case of trigonal Kummer curves and demonstrate the validity of our results by retrieving them using an alternative, independent method.
\bigskip

\noindent {\bf Acknowledgements} The authors would like to thank Miltiadis Karakikes  for helpful comments on early versions of this paper. The research project is implemented in the framework of H.F.R.I Call “Basic research Financing (Horizontal support of all Sciences)” under the National Recovery and Resilience Plan “Greece 2.0” funded by the European Union Next Generation EU (H.F.R.I.  
Project Number: 14907).
% 16618 (MIS: 5163923)).
\begin{center}
\includegraphics[scale=0.4]{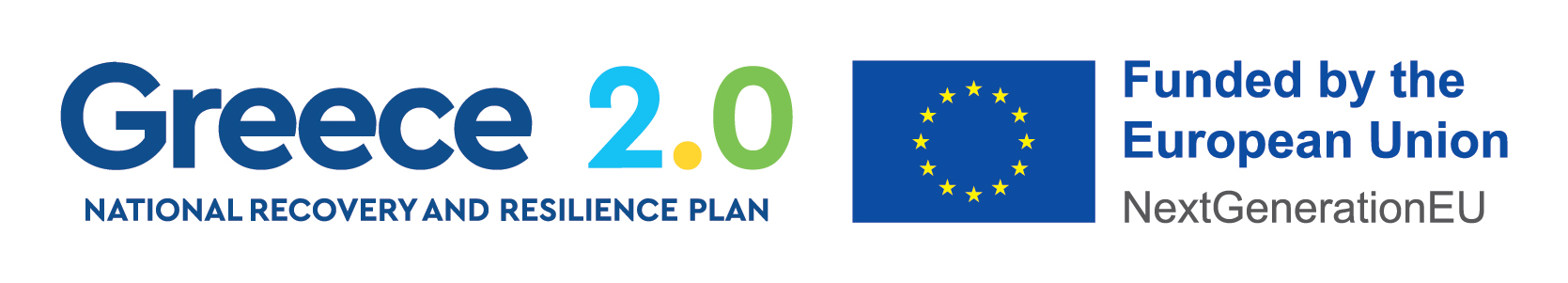}
\hskip 1cm
\includegraphics[scale=0.05]{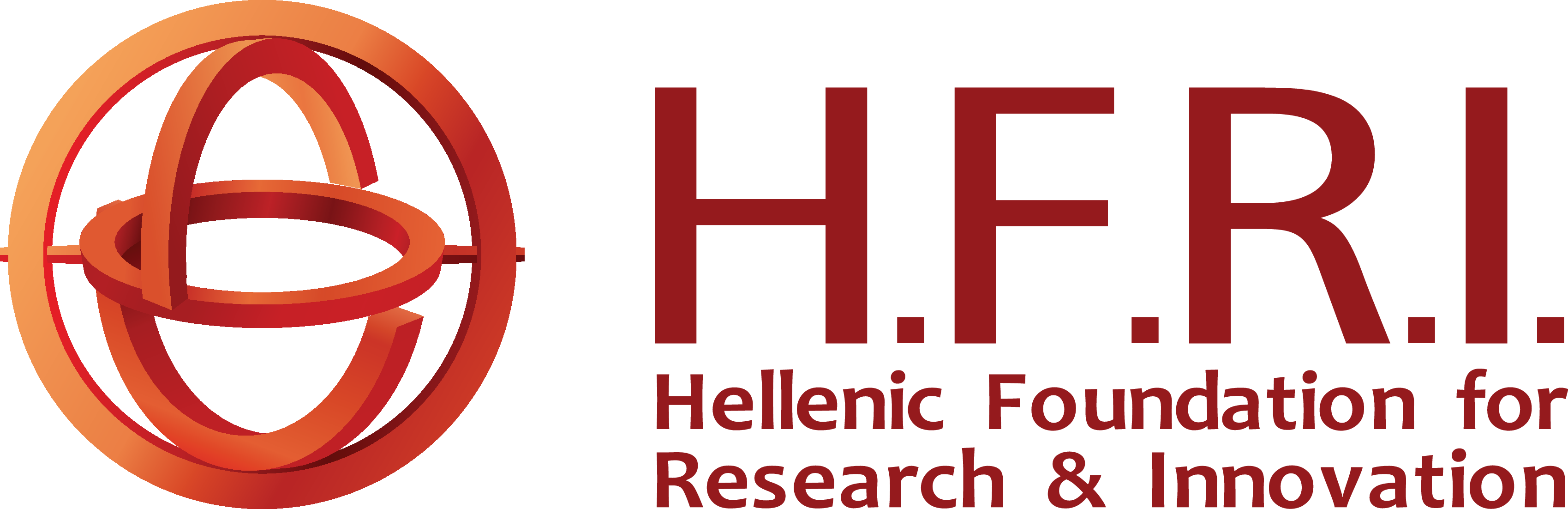}
\end{center}

\section{The Equivariant Koszul Complex}\label{sec:equiv}
Let $(X,\mathcal{O}_X)$ be a smooth, projective, irreducible scheme over an algebraically closed field $k$, and let $G$ be a finite group of order coprime to the characteristic of $k$ acting faithfully on the right on $X$. This section reviews the machinery necessary to define the Koszul complex and its cohomology in the $G$-equivariant setting. Many of the results leading up to the main theorem of this section are not original; the goal is to make the treatment as self-contained as possible.

\subsection{Equivariant quasicoherent sheaves}
Let $\rho_g:X\rightarrow X$ be the automorphism of $X$ corresponding to $g\in G$. Pulling back the induced isomorphism of sheaves $\rho^\#_g:\mathcal{O}_X\rightarrow (\rho_g)_*\mathcal{O}_X$ along $\rho_g$ yields a canonical isomorphism 
\[
c_g:\rho^*_g\mathcal{O}_X=\rho_g^{-1}\mathcal{O}_X\otimes_{\rho_g^{-1}\mathcal{O}_X} \mathcal{O}_X\overset{\rho^\#_g}{\longrightarrow}\rho_g^{-1}((\rho_g)_*\mathcal{O}_X)=\mathcal{O}_X,
\] 
which satisfies $c_{gh}=c_h\circ\rho^*_h(c_g)$, for all $g,h\in G$. The collection $\{c_g\}_{g\in G}$ will be referred to as {\em the $G$-linearization of $\mathcal{O}_X$}.

\begin{definition}[{\cite[\S 3.1.6]{MR1304906}}]\label{def:G-sheaf}
The category ${\rm QCoh}_G(X)$ of $G$-equivariant quasicoherent sheaves on $X$ is defined as follows.

\begin{enumerate}
\item The objects of ${\rm QCoh}_G(X)$ are pairs $(\mathcal{F},\{\phi_g\}_{g\in G})$, where $\mathcal{F}$ is a quasicoherent sheaf on $X$ and  $\phi_g:\rho^*_g\mathcal{F}\rightarrow\mathcal{F}$ is an isomorphism satisfying $\phi_{gh}=\phi_h\circ\rho^*_h(\phi_g)$ for all $g,h\in G$.
\[
\begin{tikzcd}
\rho_{gh}^*\mathcal F \arrow[rr, "\phi_{gh}"] \arrow[dr,"\rho_h^*(\phi_g)"'] & & 
\mathcal F  \\
& \rho_h^*\mathcal F \arrow[ur,"\phi_h"'] & 
\end{tikzcd}
\]
\item The arrows of ${\rm QCoh}_G(X)$ between objects $(\mathcal{F},\{\phi_g\}_{g\in G})$ and $(\mathcal{G},\{\psi_g\}_{g\in G})$ are morphisms $f:\mathcal{F}\rightarrow\mathcal{G}$ of quasicoherent sheaves satisfying $\psi_g\circ \rho^*_g(f) =f\circ\phi_g$ for all $g\in G$. 
\[
\begin{tikzcd}
\rho_g^*\mathcal F \arrow[r,"\rho_g^*(f)"] \arrow[d,"\varphi_g"'] & \rho_g^*\mathcal G \arrow[d,"\psi_g"] \\
\mathcal F \arrow[r,"f"'] & \mathcal G
\end{tikzcd}
\]
\end{enumerate}
\end{definition}

Henceforth, we shall refer to the objects of ${\rm QCoh}_G(X)$ as $G$-sheaves and to the arrows as $G$-morphisms or $G$-maps. The collection $\{\phi_g\}_{g\in G}$ defining the $G$-structure on $\mathcal{F}$ will be referred to as its $G$-linearization and shall be suppressed from the notation when clear from the context.

\begin{lemma}\label{lem:abcat}
${\rm QCoh}_G(X)$ is an abelian category.
\end{lemma}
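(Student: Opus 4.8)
The plan is to show that the forgetful functor $U:{\rm QCoh}_G(X)\to{\rm QCoh}(X)$, $(\mathcal{F},\{\phi_g\})\mapsto\mathcal{F}$, \emph{creates} the entire abelian structure from that of ${\rm QCoh}(X)$, which is abelian. The engine behind every step is the fact that each $\rho_g$ is an isomorphism, so $\rho_g^*$ is an exact autoequivalence of ${\rm QCoh}(X)$ and in particular commutes with the formation of kernels, cokernels, and finite biproducts.

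First I would establish that ${\rm QCoh}_G(X)$ is additive. The $G$-equivariance condition $\psi_g\circ\rho_g^*(f)=f\circ\phi_g$ is linear in $f$, so each ${\rm Hom}_{{\rm QCoh}_G(X)}\big((\mathcal{F},\phi),(\mathcal{G},\psi)\big)$ is an abelian subgroup of ${\rm Hom}_{{\rm QCoh}(X)}(\mathcal{F},\mathcal{G})$ and composition is bilinear. The zero sheaf carries a unique linearization and is a zero object; for a finite biproduct I equip $\mathcal{F}\oplus\mathcal{G}$ with the linearization $\phi_g\oplus\psi_g$, using the canonical identification $\rho_g^*(\mathcal{F}\oplus\mathcal{G})\cong\rho_g^*\mathcal{F}\oplus\rho_g^*\mathcal{G}$, and check the cocycle identity componentwise, after which the structure maps of the biproduct are visibly $G$-morphisms.

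Next I would construct kernels and cokernels. Given a $G$-morphism $f:(\mathcal{F},\phi)\to(\mathcal{G},\psi)$, let $k:\mathcal{K}\to\mathcal{F}$ be its kernel in ${\rm QCoh}(X)$. Exactness of $\rho_g^*$ identifies $\rho_g^*k$ with the kernel of $\rho_g^*f$, and the computation $f\circ(\phi_g\circ\rho_g^*k)=\psi_g\circ\rho_g^*(f)\circ\rho_g^*k=\psi_g\circ\rho_g^*(f\circ k)=0$ produces, by the universal property of $k$, a unique map $\kappa_g:\rho_g^*\mathcal{K}\to\mathcal{K}$ with $k\circ\kappa_g=\phi_g\circ\rho_g^*k$; it is an isomorphism because $\phi_g$ is. The cocycle condition $\kappa_{gh}=\kappa_h\circ\rho_h^*(\kappa_g)$ follows from that of $\{\phi_g\}$ together with the faithfulness of $U$, as both sides become equal after composition with the monomorphism $k$. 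This makes $(\mathcal{K},\{\kappa_g\})$ an object of ${\rm QCoh}_G(X)$ and $k$ a $G$-morphism, and its universal property is inherited: any $G$-morphism killed by $f$ factors uniquely through $\mathcal{K}$ in ${\rm QCoh}(X)$, and that factorization is automatically $G$-equivariant, again by faithfulness of $U$. Cokernels are obtained dually, using right exactness of $\rho_g^*$.

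The main obstacle is the final axiom: that every monomorphism is a kernel and every epimorphism is a cokernel. The key observation is that, because $U$ is faithful and kernels and cokernels are created as above, a $G$-morphism is mono (respectively epi) precisely when its underlying sheaf map is. Granting this, given a $G$-monomorphism $f$, its underlying map is the kernel of its cokernel in ${\rm QCoh}(X)$; since the cokernel in ${\rm QCoh}_G(X)$ has the same underlying map and its kernel is likewise computed on underlying sheaves, $f$ is the kernel of its cokernel in ${\rm QCoh}_G(X)$, the induced linearizations agreeing on the nose. The epimorphism case is dual, and together they show that coimage and image coincide, completing the verification that ${\rm QCoh}_G(X)$ is abelian.
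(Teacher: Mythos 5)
Your proof is correct and takes essentially the same route as the paper: both create the abelian structure through the forgetful functor to ${\rm QCoh}(X)$, with the key mechanism being that $\rho_g^*$ commutes with the formation of kernels and cokernels, so the linearizations $\phi_g$ descend to them via universal properties. The paper only sketches this (verifying the kernel and cokernel step and citing the literature for the rest), whereas you carry out the full verification --- additivity, biproducts, the inherited universal properties, the detection of monos and epis on underlying sheaves, and the coincidence of image and coimage --- so your argument is a complete version of the same idea.
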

\begin{proof}
This is straightforward and well known, see for example \cite[1.4]{MR921490}; we indicatively verify that $\ker f$ and ${\rm coker }f$ of a $G$-morphism $f:(\mathcal{F},\{\phi_g\}_{g\in G})\rightarrow(\mathcal{G},\{\psi_g\}_{g\in G})$ live in ${\rm QCoh}_G(X)$: kernels and images of morphisms of quasicoherent sheaves commute with pullbacks, and so for all $g\in G$,
\begin{eqnarray*}
&&\rho_g^*(\ker f)=\ker\rho_g^*(f)=\ker(\psi_g^{-1}\circ f\circ\phi_g)=\ker(f\circ\phi_g)\subseteq\phi_g^{-1}(\ker f).\\
&&\rho_g^*({\rm Im }f)={\rm Im}\rho_g^*(f)={\rm Im}(\psi_g^{-1}\circ f\circ\phi_g)={\rm Im}(f\circ\phi_g)\subseteq{\rm Im} f.
\end{eqnarray*}
Hence, the restriction $\rho_g^*\ker(f)\rightarrow\ker f$ of $\phi_g$ to $\rho_g^*\ker(f)$ is a $G$-linearization, and $\psi_g$ induces a well-defined linearization
$
\rho^*_g{\rm coker}f=\rho^*_g\left(\mathcal{G}/{\rm Im}f\right)\rightarrow \mathcal{G}/{\rm Im}f={\rm coker}f.
$
\end{proof}

\subsection{Equivariant functors and lifting criteria}
Let $\mathsf{For}:{\rm QCoh}_G(X)\rightarrow {\rm QCoh}(X)$ denote the forgetful functor. An additive functor $\mathsf{A}:{\rm QCoh}(X)\rightarrow {\rm QCoh}(X)$ will be called $G$-equivariant, or simply a $G$-functor, if there exists a functor $\mathsf{A}_G:{\rm QCoh}_G(X)\rightarrow {\rm QCoh}_G(X)$ such that the following diagram commutes
\[
\begin{tikzcd}[column sep=huge]
{\rm QCoh}_G(X) \arrow[r, "\mathsf{A}_G"] \arrow[d, "\mathsf{For}"] &{\rm QCoh}_G(X)\arrow[d, "\mathsf{For}"] \\
{\rm QCoh}(X) \arrow[r, "\mathsf{A}"] &{\rm QCoh}(X).
\end{tikzcd}
\]
In that case, one says that $\mathsf{A}$ lifts to $\mathsf{A}_G$ and the same terminology will be used if either one of the domain or target categories is replaced by ${\rm Vect}_k$ and the lifting is from or to $kG$-mod.

\begin{proposition}\label{prop:G-functors}
Let 
$
\mathsf{A}:{\rm QCoh}(X)\rightarrow {\rm QCoh}(X), \;\mathsf{B}:{\rm QCoh}(X)\rightarrow {\rm Vect}_k,\;\mathsf{C}:{\rm Vect}_k\rightarrow {\rm QCoh}(X)
$
be additive functors.
\begin{enumerate}
\item
$\mathsf{A}$ lifts to a functor $\mathsf{A}_G:{\rm QCoh}_G(X)\rightarrow {\rm QCoh}_G(X)$ if for each $g\in G$ there exists a natural isomorphism of functors $\sigma_g:\rho_g^*\circ \mathsf{A}\rightarrow \mathsf{A}\circ \rho_g^* $ satisfying $\sigma_{gh}=\sigma_h(\rho^*_g)\circ \rho_h^*(\sigma_g)$.\\

\item $\mathsf{B}$ lifts to a functor $\mathsf{B}_G:{\rm QCoh}_G(X)\rightarrow kG\text{-}{\rm mod}$ if for each $g\in G$ there exists a natural isomorphism of functors $\tau_g:\mathsf{B}\rightarrow \mathsf{B}\circ\rho^*_g$ satisfying $\tau_{gh}=\tau_h(\rho_g^*)\circ\tau_g$.\\

\item $\mathsf{C}$ lifts to a functor $\mathsf{C}_G:kG\text{-}{\rm mod}\rightarrow {\rm QCoh}_G(X)$ if for each $g\in G$ there exists a natural isomorphism of functors $\upsilon_g:\rho_g^*\mathsf{C}\rightarrow \mathsf{C}$ satisfying $\tau_{gh}=\tau_h\circ\rho_h^*(\tau_g)$.
\end{enumerate}
\end{proposition}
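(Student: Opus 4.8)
The plan is to handle the three statements by a single uniform device: in each case the requirement that the lift commute with the forgetful functor pins down the lifted functor on underlying objects and on morphisms, so the only data I must supply is a $G$-linearization (for (1) and (3)) or a $kG$-action (for (2)), which I manufacture by composing the functor applied to the ambient linearization/action with the given family of natural isomorphisms. The sole nontrivial point is then to check that this manufactured datum obeys the cocycle axiom of Definition \ref{def:G-sheaf} (resp.\ the module associativity axiom), and in every case this reduces to one diagram chase combining three ingredients: the cocycle for $\{\phi_g\}$, the coherence condition imposed on the family $\{\sigma_g\}$ (resp.\ $\{\tau_g\}$, $\{\upsilon_g\}$), and the naturality of the $h$-indexed transformation evaluated at the morphism $\phi_g$.

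For (1), given $(\mathcal{F},\{\phi_g\})$ I would set $\mathsf{A}_G(\mathcal{F},\{\phi_g\}):=(\mathsf{A}\mathcal{F},\{\phi_g^{\mathsf{A}}\})$ and $\mathsf{A}_G(f):=\mathsf{A}(f)$, where
\[
\phi_g^{\mathsf{A}} := \mathsf{A}(\phi_g)\circ\sigma_{g,\mathcal{F}}\colon \rho_g^*\mathsf{A}\mathcal{F}\xrightarrow{\ \sigma_{g,\mathcal{F}}\ }\mathsf{A}(\rho_g^*\mathcal{F})\xrightarrow{\ \mathsf{A}(\phi_g)\ }\mathsf{A}\mathcal{F}.
\]
Each $\phi_g^{\mathsf{A}}$ is an isomorphism, being a composite of two. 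The cocycle identity $\phi_{gh}^{\mathsf{A}}=\phi_h^{\mathsf{A}}\circ\rho_h^*(\phi_g^{\mathsf{A}})$ I would obtain by expanding $\phi_{gh}^{\mathsf{A}}=\mathsf{A}(\phi_h)\circ\mathsf{A}(\rho_h^*\phi_g)\circ\sigma_{h,\rho_g^*\mathcal{F}}\circ\rho_h^*(\sigma_{g,\mathcal{F}})$ via the two cocycles, rewriting the middle pair using the naturality square of $\sigma_h$ at $\phi_g$, namely $\mathsf{A}(\rho_h^*\phi_g)\circ\sigma_{h,\rho_g^*\mathcal{F}}=\sigma_{h,\mathcal{F}}\circ\rho_h^*(\mathsf{A}\phi_g)$, and regrouping. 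That $\mathsf{A}(f)$ is a $G$-morphism whenever $f$ is uses naturality of $\sigma_g$ at $f$ together with the defining relation of a $G$-map; functoriality of $\mathsf{A}_G$ and the equality $\mathsf{For}\circ\mathsf{A}_G=\mathsf{A}\circ\mathsf{For}$ are then immediate.

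Parts (2) and (3) follow the identical template. In (2) I equip $\mathsf{B}(\mathcal{F})$ with the operators $\rho_g^{\mathsf{B}}:=\mathsf{B}(\phi_g)\circ\tau_{g,\mathcal{F}}$; the same three-ingredient chase, now invoking the coherence $\tau_{gh}=\tau_h(\rho_g^*)\circ\tau_g$ and naturality of $\tau_h$ at $\phi_g$, yields $\rho_{gh}^{\mathsf{B}}=\rho_h^{\mathsf{B}}\circ\rho_g^{\mathsf{B}}$, making $\mathsf{B}(\mathcal{F})$ a module over $kG$, while naturality of $\tau_g$ at a $G$-map $f$ shows $\mathsf{B}(f)$ is $kG$-linear. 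In (3), writing $\lambda_g$ for the action of $g$ on a module $V$, I would put the linearization on $\mathsf{C}(V)$ to be $\theta_g:=\mathsf{C}(\lambda_g)\circ\upsilon_{g,V}$; here the coherence on $\{\upsilon_g\}$ should be read as $\upsilon_{gh}=\upsilon_h\circ\rho_h^*(\upsilon_g)$, and combining it with naturality of $\upsilon_h$ at $\lambda_g$ and the module axiom for $V$ gives the linearization cocycle for $\theta$, whereas naturality of $\upsilon_g$ at a $kG$-linear map produces a $G$-morphism.

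I expect the only genuine obstacle to be bookkeeping rather than anything conceptual: keeping straight the whiskerings and the evaluation objects of the natural transformations, and in particular respecting the order reversal $\rho_{gh}^*=\rho_h^*\circ\rho_g^*$ forced by the right action, so that each composite lands in the correct source $\rho_{gh}^*(-)$ and the cocycle/associativity identities close up on the nose. The one point worth flagging explicitly is consistency of conventions: the chases produce anti-homomorphisms $g\mapsto\rho_g^{\mathsf{B}}$ and require $g\mapsto\lambda_g$ to be an anti-homomorphism, matching the right $kG$-module structure that accompanies a right action on $X$.
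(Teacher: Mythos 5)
Your proposal is correct, and for parts (1) and (2) it coincides with the paper's own proof: you build the same linearizations $\mathsf{A}(\phi_g)\circ\sigma_{g,\mathcal{F}}$ and $\mathsf{B}(\phi_g)\circ\tau_{g,\mathcal{F}}$, and verify the cocycle/module identity by the same three-ingredient chase (cocycle for $\{\phi_g\}$, coherence for $\{\sigma_g\}$ resp.\ $\{\tau_g\}$, naturality of the $h$-indexed transformation at $\phi_g$), with morphisms handled by naturality exactly as in the paper.

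Part (3) is where you genuinely diverge, and your version is the stronger one. The paper's proof of (3) takes the linearization on $\mathsf{C}(V)$ to be $\upsilon_g(V)$ alone; this does satisfy the literal statement, but it discards the $kG$-module structure $\lambda$ on $V$, so the resulting lift factors through the forgetful functor $kG\text{-mod}\rightarrow{\rm Vect}_k$. Your linearization $\theta_g=\mathsf{C}(\lambda_g)\circ\upsilon_{g,V}$ incorporates the action, and it is the construction the paper actually relies on later: in Proposition \ref{prop:G-adj} the linearization on $V\otimes\mathcal{O}_X$ is taken to be $\kappa_g=\theta_g\otimes c_g$, which is precisely your recipe applied to $\mathsf{C}=-\otimes\mathcal{O}_X$, not the recipe from the paper's proof of (3) (which would give ${\rm id}_V\otimes c_g$, make the induced action on $H^0(V\otimes\mathcal{O}_X)\cong V$ trivial, and break the $G$-equivariance of the adjunction unit). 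Your closing remark on conventions is also apt: the chases in (2) and (3) produce $\theta_{gh}=\theta_h\circ\theta_g$, an anti-homomorphism --- exactly what the paper's own computation yields before silently calling it a $kG$-module --- and you correctly identify this as the right-module convention forced by the right action of $G$ on $X$.
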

\begin{proof}
Let $(\mathcal{F},\{\phi_g\}_{g\in G}),(\mathcal{F'},\{\phi'_g\}_{g\in G})\in{\rm QCoh}_G(X)$ and let $f:\mathcal{F}\rightarrow\mathcal{F'}$ be a $G$-morphism.\\

(1): The isomorphism $\psi_g=\mathsf{A}(\phi_g)\circ\sigma_g(\mathcal{F}):\rho_g^* \mathsf{A}(\mathcal{F})\rightarrow \mathsf{A}(\mathcal{F})$ is a $G$-linearization on $\mathsf{A}(\mathcal{F})$, since
\[
\psi_{gh}
=\mathsf{A}(\phi_h)\circ \mathsf{A}\rho_h^*(\phi_g)\circ\sigma_h(\rho_g^*\mathcal{F})\circ\rho_h^*\sigma_g(\mathcal{F})
=\mathsf{A}(\phi_h)\circ \sigma_h(\mathcal{F})\circ \rho_h^*\mathsf{A}(\phi_g)\circ\rho_h^*\sigma_g(\mathcal{F})
=\psi_h\circ\rho^*_h(\psi_g),
\]
where the second equality follows from naturality of $\sigma_h$
\[
\begin{tikzcd}
\rho_h^*\mathsf{A}(\rho_g^*\mathcal{F}) \arrow[r,"\sigma_h(\rho_g^*\mathcal{F})"] \arrow[d,"\rho_h^*\mathsf{A}(\phi_g)"'] & \mathsf{A}\rho_h^*(\rho_g^*\mathcal F) \arrow[d,"\mathsf{A}\rho^*_h(\phi_g)"] \\
\rho_h^*\mathsf{A}(\mathcal F) \arrow[r,"\sigma_h(\mathcal{F})"] & \mathsf{A}\rho_h^*(\mathcal F).
\end{tikzcd}
\]

If $\psi'_g=\mathsf{A}(\phi'_g)\circ\sigma_g(\mathcal{F'})$ is the $G$-linearization of $\mathsf{A}(\mathcal{F'})$, then $\mathsf{A}(f)$ is a $G$-morphism, since
\[
\psi'_g\circ\rho^*_g(\mathsf{A}(f))=\mathsf{A}(\phi'_g)\circ\sigma_g(\mathcal{F'})\circ\rho^*_g(\mathsf{A}(f))=\mathsf{A}(\phi'_g)\circ \mathsf{A}\rho_g^*(f)\circ\sigma_g(\mathcal{F})=\mathsf{A}(f)\circ \mathsf{A}(\phi_g)\circ\sigma_g(\mathcal{F})=\mathsf{A}(f)\circ\psi_g,
\]
by naturality of $\sigma_g$.\\

(2): The $k$-isomorphism $\theta_g=\mathsf{B}(\phi_g)\circ\tau_g(\mathcal{F}):\mathsf{B}(\mathcal{F})\rightarrow \mathsf{B}(\mathcal{F})$ makes $\mathsf{B}(\mathcal{F})$ a $kG$-module $\mathsf{B}(\mathcal{F})$, since
\[
\theta_{gh}
=\mathsf{B}(\phi_{gh})\circ\tau_{gh}(\mathcal{F})
=\mathsf{B}(\phi_h)\circ \mathsf{B}\rho_h^*(\phi_g)\circ\tau_h(\rho_g^*\mathcal{F})\circ \tau_g(\mathcal{F})
=\mathsf{B}(\phi_h)\circ \tau_h(\mathcal{F})\circ \mathsf{B}(\phi_g)\circ\tau_g(\mathcal{F})
=\theta_h\theta_g,
\]
where naturality of $\tau_g$ has been used as follows
\[
\begin{tikzcd}
\mathsf{B}\rho_g^*(\mathcal{F}) \arrow[r,"\tau_h(\rho_g^*\mathcal{F})"] \arrow[d,"\mathsf{B}(\phi_g)"'] & \mathsf{B}\rho_h^*\rho_g^*(\mathcal F) \arrow[d,"\mathsf{B}\rho_h^*(\phi_g)"] \\
\mathsf{B}(\mathcal F) \arrow[r,"\tau_h(\mathcal{F})"] & \mathsf{B}\rho_h^*(\mathcal F).
\end{tikzcd}
\]

If $\theta_g'$ is the $G$-action on $\mathsf{B}(\mathcal{F}')$, then $\mathsf{B}(f)$ is a morphism of $kG$-modules, since
\[
\theta_g'\circ \mathsf{B}(f)
=\mathsf{B}(\phi_g')\circ\tau_g(\mathcal{F'})\circ \mathsf{B}(f)
=\mathsf{B}(\phi_g')\circ \mathsf{B}(\rho_g^*(f))\circ\tau_g(\mathcal{F})
=\mathsf{B}(f)\circ \mathsf{B}(\phi_g)\circ\tau_g(\mathcal{F})
=\mathsf{B}(f)\circ \theta_g.
\]
Note that the second equality above is due to naturality of $\tau_g$ and the third due to $G$-equivariance of $f$.\\

(3): The isomorphism $\upsilon_g(\mathcal{F}):\rho_g^*\mathcal{F}\rightarrow\mathcal{F}$ automatically defines a $G$-linearization on $\mathsf{C}(V)$ for any $V\in{\rm Vect}_k$, and naturality of $\upsilon_g$ ensures that for any $k$-linear map $f:V\rightarrow W$, $\mathsf{C}(f)$ is $G$-equivariant.
\end{proof}

\noindent {\em Remark.}
 It is often the case, {\cite[2.8]{MR3979084}}, that the lifting criteria above are given definitions of $G$-functors.

\subsection{Fundamental equivariant operations}
Tensor products, exterior powers and global sections are known to be $G$-functors; see for example {\cite[p.32]{MR1304906}} or \cite[p.94]{MR2665168}. This can be verified by directly showing they satisfy the criteria Proposition \ref{prop:G-functors}.
\begin{proposition}\label{prop:exGfunctors}
Let $(\mathcal{F},\{\phi_g\}_{g\in G})\in {\rm QCoh}_G(X)$. The following functors are $G$-functors.
\begin{enumerate}
\item
$ -\otimes \mathcal{F}:{\rm QCoh}(X)\rightarrow{\rm QCoh}(X)$, for any $ (\mathcal{F},\{\phi_g\}_{g\in G})\in {\rm QCoh}_G(X)$.\\
\item
$\bigwedge^p(-):{\rm QCoh}(X)\rightarrow {\rm QCoh}(X)$, for any $p\in\mathbb{Z}_{\geq 0}$.\\
\item 
$H^0(-):{\rm QCoh}(X)\rightarrow{\rm Vect}_k$, where $H^0(-)=H^0(X,-)=\Gamma(X,-)$.\\
\item 
$-\otimes\mathcal{F}:{\rm Vect}_k\rightarrow{\rm QCoh}(X)$, for any $ (\mathcal{F},\{\phi_g\}_{g\in G})\in {\rm QCoh}_G(X)$.
\end{enumerate}
\end{proposition}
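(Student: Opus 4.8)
The plan is to verify that each of the four functors satisfies the appropriate lifting criterion of Proposition~\ref{prop:G-functors}: parts (1) and (2) of the statement fall under criterion (1) (endofunctors of ${\rm QCoh}(X)$), part (3) under criterion (2) (a functor valued in ${\rm Vect}_k$), and part (4) under criterion (3) (a functor defined on ${\rm Vect}_k$). In every case the natural isomorphisms demanded by the criteria arise from the single structural fact that the pullback $\rho_g^*$ commutes, up to canonical natural isomorphism, with the operation at hand, combined with the linearization $\{\phi_g\}_{g\in G}$ of $\mathcal{F}$ whenever the fixed $G$-sheaf $\mathcal{F}$ occurs.

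Concretely, for $\mathsf{A}=-\otimes\mathcal{F}$ I would take the standard isomorphism $\rho_g^*(\mathcal{E}\otimes\mathcal{F})\cong\rho_g^*\mathcal{E}\otimes\rho_g^*\mathcal{F}$ expressing compatibility of pullback with tensor products and postcompose it with $1\otimes\phi_g$ to obtain $\sigma_g:\rho_g^*\circ\mathsf{A}\to\mathsf{A}\circ\rho_g^*$. The exterior power case is the same but lighter, as no fixed sheaf intervenes: the canonical isomorphism $\rho_g^*\bigwedge^p\mathcal{E}\cong\bigwedge^p\rho_g^*\mathcal{E}$ supplies $\sigma_g$ directly. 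For global sections I would use that an automorphism induces an isomorphism on sections, so that $s\in H^0(X,\mathcal{E})$ pulls back to $\rho_g^* s\in H^0(X,\rho_g^*\mathcal{E})$; this is the natural isomorphism $\tau_g:H^0\to H^0\circ\rho_g^*$ of criterion (2), and it is invertible precisely because $\rho_g$ is an automorphism rather than a mere morphism. Finally, for $\mathsf{C}=-\otimes\mathcal{F}$ on ${\rm Vect}_k$, a bare vector space $V$ carries no geometry, so $\rho_g^*(V\otimes\mathcal{F})$ is canonically $V\otimes\rho_g^*\mathcal{F}$, and $\upsilon_g:=1_V\otimes\phi_g$ yields the isomorphism $\rho_g^*\mathsf{C}\to\mathsf{C}$ required by criterion (3).

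In each case the cocycle identity is inherited from two sources: the linearization relation $\phi_{gh}=\phi_h\circ\rho_h^*(\phi_g)$ of $\mathcal{F}$, and the compatibility of the base-change isomorphisms with the composition $\rho_{gh}^*=\rho_h^*\circ\rho_g^*$. I expect the only genuinely delicate point to be this bookkeeping: one must confirm that the canonical isomorphisms governing $\rho_g^*$ applied to tensor products, exterior powers, and global sections are themselves coherent under composition of the $\rho_g$, so that once combined with the $\phi_g$ they reproduce exactly the prescribed relations $\sigma_{gh}=\sigma_h(\rho_g^*)\circ\rho_h^*(\sigma_g)$, $\tau_{gh}=\tau_h(\rho_g^*)\circ\tau_g$, and the analogue for $\upsilon_g$. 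This amounts to a routine chase of coherence diagrams; naturality in the argument $\mathcal{E}$ (respectively $V$) is then automatic, since every constituent isomorphism is itself natural.
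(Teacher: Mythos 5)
Your proposal is correct and follows essentially the same route as the paper: it invokes the lifting criteria of Proposition~\ref{prop:G-functors}, building $\sigma_g$ from the pullback--tensor and pullback--exterior-power compatibility isomorphisms composed with ${\rm id}\otimes\phi_g$ for (1)--(2), taking $\tau_g$ to be pullback of global sections for (3) (which is exactly the paper's $s\mapsto\rho_g^*(s)\circ c_g^{-1}$, with the canonical identification $c_g:\rho_g^*\mathcal{O}_X\cong\mathcal{O}_X$ made explicit there), and $\upsilon_g={\rm id}_V\otimes\phi_g$ for (4). The only difference is one of detail: the paper carries out the cocycle verifications $\sigma_{gh}=\sigma_h(\rho_g^*)\circ\rho_h^*(\sigma_g)$, etc., explicitly, whereas you defer them to a coherence chase that you correctly identify as the delicate point.
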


\begin{proof}
%\item For $p\in\mathbb{Z}_{\geq 0}$, the functor $\bigwedge^p(-):{\rm QCoh}(X)\rightarrow {\rm QCoh}(X)$ is known to commute with pullbacks {\color{red} [ref]}; the canonical base-change natural isomorphism $\sigma_g: \bigwedge^p\rho_g^*(-)\rightarrow \rho_g^*\bigwedge^p(-)$ induced by the universal alternating map satisfies the condition $\sigma_{gh}=\sigma_h(\rho^*_g)\circ \rho_h^*(\sigma_g)$ as the construction is purely functorial. Thus  $\bigwedge^p(-)$ is a $G$-equivariant functor.\\
(1): If $(\mathcal{F},\{\phi_g\}_{g\in G})\in {\rm QCoh}_G(X)$ and $\kappa_g:\rho_g^*(-\otimes \mathcal{F})\rightarrow\rho_g^*(-)\otimes \rho_g^*\mathcal{F}$ is the natural isomorphism of the compatibility of tensor products with pullbacks \cite[II.5.16(e)]{Hartshorne:77}, then one has a commutative triangle
\[
\begin{tikzcd}
\rho_{gh}^*(-\otimes \mathcal{F}) \arrow[rr, "\kappa_{gh}"] \arrow[dr,"\rho_h^*(\kappa_g)"'] & & 
\rho^*_{gh}(-)\otimes\rho^*_{gh}(\mathcal{F}) \\
& \rho_h^*(\rho_g^*(-)\otimes\rho_g^*(\mathcal{F}) )  \arrow[ur,"\kappa_h"'] & 
\end{tikzcd}
\]
and thus the natural isomorphism
\[
\sigma_g:\rho_g^*(-\otimes \mathcal{F})\overset{\kappa_g}{\longrightarrow}\rho_g^*(-)\otimes \rho_g^*\mathcal{F}\overset{{\rm id}\otimes\phi_g}{\longrightarrow}\rho_g^*(-)\otimes \mathcal{F}
\]
satisfies the required condition
\begin{eqnarray*}
\sigma_{gh}
&=&{\rm id}\otimes \phi_{gh}\circ\kappa_{gh}
=
{\rm id} \otimes (\phi_h\circ \rho_h^*(\phi_g))\circ (\kappa_h\circ \rho_h^*(\kappa_g))
=
{\rm id}\otimes\phi_h \circ {\rm id}\otimes\rho_h^*(\phi_g)\circ \kappa_h\circ\rho_h^*(\kappa_g)
\\
&=&{\rm id}\otimes\phi_h \circ \kappa_h(\rho_g^*)\circ\rho_h^*({\rm id}\otimes\phi_g)\circ\rho_h^*(\kappa_g)
=\sigma_h(\rho_g^*)\circ\rho_h^*(\sigma_g).
\end{eqnarray*}
Hence $-\otimes \mathcal{F}:{\rm QCoh}(X)\rightarrow{\rm QCoh}(X)$ lifts to a functor ${\rm QCoh}_G(X)\rightarrow{\rm QCoh}_G(X)$.\\

(2):  Similarly, for any $p\in\mathbb{Z}_{p\geq 0}$, one uses the natural isomorphism of the compatibility of exterior powers with pullbacks \cite[7.8.(3)]{MR4225278},
\[
\sigma_g:\rho_g^*\big(\bigwedge^p(-)\big)
\rightarrow
\bigwedge^p\rho_g^*(-),
\]
to verify that $\bigwedge^p(-):{\rm QCoh}(X)\rightarrow {\rm QCoh}(X)$ lifts to a functor ${\rm QCoh}_G(X)\rightarrow{\rm QCoh}_G(X)$.\\

(3): Let $\{c_g\}_{g\in G}$ be the $G$-linearization on the structure sheaf $\mathcal{O}_X$. Suppressing the curve $X$ from the notation, consider the global sections functor $H^0(-):{\rm QCoh}(X)\rightarrow {\rm Vect}_k$. If $(\mathcal{F},\{\phi_g\})_{g\in G}\in {\rm QCoh}_G(X)$, then a global section $s\in H^0(\mathcal{F})$ is a morphism $s:\mathcal{O}_X\rightarrow \mathcal{F}$, and the composition
\[
\mathcal{O}_X\overset{c_g^{-1}}{\rightarrow}\rho_g^*\mathcal{O}_X\overset{\rho_g^*(s)}{\rightarrow}\rho_g^*\mathcal{F}
\]
gives an element of $H^0(\mathcal{\rho_g^*F})$. For each $g\in G$, the $k$-linear map
\[
\tau_g(\mathcal{F}):H^0(\mathcal{F})\rightarrow H^0(\rho_g^*\mathcal{F}),\;s\mapsto\rho_g^*(s)\circ c_g^{-1}
\]
defines a natural transformation $\tau_g:H^0(-)\rightarrow H^0(\rho_g^*-)$: if $f:\mathcal{F}\rightarrow \mathcal{G}$ is a morphism in ${\rm QCoh}_G(X)$, then for all $s\in H^0(\mathcal{F})$ one obtains 
\[
(\tau_g(\mathcal{G})\circ H^0(f))(s)
=\tau_g(\mathcal{G})(f\circ s)
=\rho_g^*(f\circ s)\circ c_g^{-1}
=\rho_g^*(f)\circ \rho_g^*(s)\circ c_g^{-1}
=(H^0(\rho_g^*(f))\circ\tau_g(\mathcal{F}))(s).
\]
To verify the condition $\tau_{gh}=\tau_h(\rho_g^*)\circ\tau_g$ one evaluates at $\mathcal{F}\in {\rm QCoh}_G(X)$, then at $s\in H^0(\mathcal{F})$
\[
(\tau_h(\rho^*_g\mathcal{F})\circ \tau_g(\mathcal{F}))(s)
=\tau_h(\rho^*_g\mathcal{F})(\rho_g^*(s)\circ c_g^{-1})
=\rho_h^*\rho_g^*(s)\circ \rho_h^*(c_g^{-1})\circ c_h^{-1}
=\rho_{gh}^*(s)\circ c_{gh}^{-1}
=\tau_{gh}(\mathcal{F})(s).
\]
So $H^0(-)$ is a $G$-functor.\\

(4): If $(\mathcal{F},\{\phi_g\}_{g\in G})\in {\rm QCoh}_G(X)$ and $V \in {\rm Vect}_k$, then the isomorphisms
\[
\upsilon_{g}(\mathcal{F})
={\rm id}_V\otimes \phi_g:V\otimes\rho_g^*(\mathcal{F})\rightarrow V\otimes \mathcal{F}
\]
are clearly natural in $V$ and thus, $\upsilon_g:\rho_g^*(-\otimes\mathcal{F})=-\otimes\rho_g^*(\mathcal{F})\rightarrow -\otimes\mathcal{F}$ is a natural isomorphism, satisfying
\[
\upsilon_{gh}=
{\rm id}_V\otimes\phi_{gh}={\rm id}_V\otimes (\phi_h\circ \rho_h^*(\phi_g))=({\rm id}_V\otimes\phi_h)\circ ({\rm id}_V\otimes\rho_h^*(\phi_g))=\upsilon_h\circ\rho_h^*(\upsilon_g),
\]
meaning that $-\otimes\mathcal{F}:{\rm Vect}_k\rightarrow{\rm QCoh}(X)$ is a $G$-functor.
\end{proof}

It is worth noting that by \cite[2.19]{MR3979084}, an adjunction between functors that satisfy the $G$-compatibility conditions of Proposition \ref{prop:G-functors} lifts canonically to an adjunction between their $G$-equivariant counterparts. We give a self-contained proof in the special case of the adjunction $-\otimes\mathcal{O}_X\dashv H^0(-)$ as functors between the categories ${\rm Vect_k}$ and ${\rm QCoh}(X)$,  \cite[7.11]{MR4225278}.
\begin{proposition}\label{prop:G-adj}
The functor $-\otimes\mathcal{O}_X:kG\text{-mod}\rightarrow {\rm QCoh}_G(X)$ is left adjoint to the functor $H^0(-):{\rm QCoh}_G(X)\rightarrow kG\text{-mod}$.
\end{proposition}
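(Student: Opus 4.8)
The plan is to bootstrap from the non-equivariant adjunction $-\otimes\mathcal{O}_X\dashv H^0(-)$ of \cite[7.11]{MR4225278}, which supplies for every $V\in{\rm Vect}_k$ and $\mathcal{F}\in{\rm QCoh}(X)$ a natural bijection
\[
\Phi_{V,\mathcal{F}}:{\rm Hom}_{{\rm QCoh}(X)}(V\otimes\mathcal{O}_X,\mathcal{F})\xrightarrow{\ \sim\ }{\rm Hom}_{{\rm Vect}_k}(V,H^0(\mathcal{F})),\qquad \Phi_{V,\mathcal{F}}(u)(v)=u\circ(e_v\otimes{\rm id}_{\mathcal{O}_X}),
\]
where $e_v:\mathcal{O}_X\to V\otimes\mathcal{O}_X$ is induced by $1\mapsto v$. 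Since a $G$-morphism is just an underlying morphism satisfying the linearization compatibility of Definition \ref{def:G-sheaf}(2), and a $kG$-module map is just a $k$-linear map commuting with the actions, it suffices to show that for $V\in kG\text{-}{\rm mod}$ and $\mathcal{F}\in{\rm QCoh}_G(X)$ the bijection $\Phi_{V,\mathcal{F}}$ restricts to a bijection between the equivariant sub-Hom-sets; naturality in both arguments then descends for free. The relevant $G$-structures are exactly those produced in Proposition \ref{prop:exGfunctors}: the linearization on $V\otimes\mathcal{O}_X$ is $\Lambda_g=\lambda_g\otimes c_g$, combining the action $\lambda_g$ of $g$ on $V$ with the canonical linearization $c_g$ of $\mathcal{O}_X$, while the action on $H^0(\mathcal{F})$ is $\theta_g=H^0(\phi_g)\circ\tau_g(\mathcal{F})$, that is $s\mapsto\phi_g\circ\rho_g^*(s)\circ c_g^{-1}$.

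The crux is a single compatibility square expressing that $\upsilon_g$ and $\tau_g$ are mates under $\Phi$. Writing $\upsilon_g(V)={\rm id}_V\otimes c_g$ and setting $u_g:=\rho_g^*(u)\circ\upsilon_g(V)^{-1}:V\otimes\mathcal{O}_X\to\rho_g^*\mathcal{F}$, the claim is that
\[
\Phi_{V,\rho_g^*\mathcal{F}}(u_g)=\tau_g(\mathcal{F})\circ\Phi_{V,\mathcal{F}}(u).
\]
This I would verify directly on elements: unwinding $\upsilon_g(V)^{-1}\circ(e_v\otimes{\rm id})=\rho_g^*(e_v\otimes{\rm id}_{\mathcal{O}_X})\circ c_g^{-1}$ under the identification $\rho_g^*(V\otimes\mathcal{O}_X)=V\otimes\rho_g^*\mathcal{O}_X$, one gets $u_g\circ(e_v\otimes{\rm id})=\rho_g^*\big(u\circ(e_v\otimes{\rm id})\big)\circ c_g^{-1}=\rho_g^*(\Phi(u)(v))\circ c_g^{-1}$, which is precisely $\tau_g(\mathcal{F})$ applied to $\Phi(u)(v)$ by the explicit formula for $\tau_g$ established in the proof of Proposition \ref{prop:exGfunctors}(3).

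With this square in hand, the equivalence of the two equivariance conditions is formal. Using $\Lambda_g=(\lambda_g\otimes{\rm id}_{\mathcal{O}_X})\circ\upsilon_g(V)$, the compatibility $\phi_g\circ\rho_g^*(u)=u\circ\Lambda_g$ rearranges to $\phi_g\circ u_g=u\circ(\lambda_g\otimes{\rm id}_{\mathcal{O}_X})$. Applying $\Phi_{V,\mathcal{F}}$ and invoking naturality in $\mathcal{F}$ for the morphism $\phi_g$ together with the compatibility square, the left-hand side becomes $H^0(\phi_g)\circ\tau_g(\mathcal{F})\circ\Phi(u)=\theta_g\circ\Phi(u)$; invoking naturality in $V$ for the morphism $\lambda_g$, the right-hand side becomes $\Phi(u)\circ\lambda_g$. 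As $\Phi$ is a bijection, $u$ satisfies the equivariance condition for $g$ if and only if $\theta_g\circ\Phi(u)=\Phi(u)\circ\lambda_g$, which is exactly $kG$-linearity of $\Phi(u)$ for $g$. Ranging over all $g\in G$ shows that $\Phi$ identifies $G$-morphisms with $kG$-module maps, so the adjunction descends to the equivariant categories. I expect the main obstacle to be bookkeeping rather than conceptual: correctly pinning down the induced linearizations $\Lambda_g$ and $\theta_g$ from Proposition \ref{prop:exGfunctors}, keeping the pullback identifications $\rho_g^*(V\otimes\mathcal{O}_X)=V\otimes\rho_g^*\mathcal{O}_X$ and the cocycle conventions $\phi_{gh}=\phi_h\circ\rho_h^*(\phi_g)$ consistent throughout, and checking that the non-equivariant naturality of $\Phi$ genuinely forces descent of the restricted bijection.
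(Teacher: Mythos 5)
Your proposal is correct and takes essentially the same route as the paper: both bootstrap from the non-equivariant adjunction and show that the bijection $f\mapsto H^0(f)$ restricts to the equivariant Hom-sets, and your ``mate'' square $\Phi_{V,\rho_g^*\mathcal{F}}(u_g)=\tau_g(\mathcal{F})\circ\Phi_{V,\mathcal{F}}(u)$ is exactly the section-level computation the paper carries out with its explicit action formulas $\eta_g(s)=\phi_g\circ\rho_g^*(s)\circ c_g^{-1}$ and $\kappa_g=\theta_g\otimes c_g$. The only difference is organizational: the paper checks the two implications separately by direct computation (its converse uses a cancellation against the generating sections $\rho_g^*(s)$), whereas you obtain the equivalence in one stroke from naturality and injectivity of $\Phi$, which is a slightly cleaner packaging of the same argument.
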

\begin{proof}
Let $(\mathcal{F},\{\phi_g\}_{g\in G})\in {\rm QCoh}_G(X)$, let $(V,\{\theta_g\}_{g\in G})\in kG\text{-mod}$, and let
$\{c_g\}_{g\in G}$ be the $G$-linearization of the structure sheaf $\mathcal{O}_X$. Recall by the examples above that the $G$-action on $H^0(\mathcal{F})$ is given by $\eta_g(s)=\phi_g\circ \rho_g^*(s)\circ c_g^{-1}$ and that the $G$-linearization on $V\otimes\mathcal{O}_X$ is given by $\kappa_g=\theta_g\otimes c_g$. Accordingly, the $G$-action on $H^0(V\otimes \mathcal{O}_X)$ induced by that $G$-linearization is given by $\zeta_g(s)=(\theta\otimes c_g)\circ \rho_g^*(s)\circ c_g^{-1}$.\\

If $f:V\otimes\mathcal{O}_X\rightarrow \mathcal{F}$ is a $G$-morphism, then $\phi_g\circ\rho_g^*(f)=f\circ\kappa_g=f\circ(\theta_g\otimes c_g)$. For $s\in H^0(V\otimes \mathcal{O}_X)$,
\[
(\eta_g\circ H^0(f))(s)
=\eta_g(f\circ s)=\phi_g\circ \rho_g^*(f\circ s)\circ c_g^{-1}
=f\circ(\theta_g\otimes c_g)\circ\rho_g^*(s)\circ c_g^{-1}=f\circ\zeta_g(s)=(H^0(f)\circ\zeta_g)(s)
\]
and thus $H^0(f)$ is a $kG$-module homomorphism.\\

Conversely, the non-equivariant adjunction bijection ${\rm Hom}_{{\rm QC}(X)}(V\otimes \mathcal{O}_X,\mathcal{F})\cong{\rm Hom}_k(V,H^0(\mathcal{F}))$ implies that any $k$-linear map $V\cong H^0(V\otimes\mathcal{O}_X)\rightarrow H^0(\mathcal{F})$ must be of the form $H^0(f)$ for some $f:V\otimes\mathcal{O}_X\rightarrow \mathcal{F}$. The computations above demonstrate that if $H^0(f)$ is a $kG$-module homomorphism then 
\[
\eta_g(f\circ s)=f\circ\zeta_g(s)\Leftrightarrow
\phi_g\circ \rho_g^*(f)\circ\rho_g^*(s)\circ c_g^{-1}=f\circ(\theta_g\otimes c_g)\circ\rho_g^*(s)\circ c_{g^{-1}}.
\]
Right-composition with $c_g\circ\rho^*_{g^{-1}}(s)$ gives $\phi_g\circ\rho_g^*(f)=f\circ\kappa_g=f\circ(\theta_g\otimes c_g)$, and so $f$ is a $G$-morphism.
\end{proof}

\subsection{Contraction maps}
Let $\mathcal{F}\in{\rm QCoh}(X)$ and let $p\in\mathbb{Z}_{\geq 0}$. Any permutation $\sigma\in\mathfrak{S}_p$ in the symmetric group defines a unique natural isomorphism $\tau_\sigma: \mathcal{F}^{\otimes p} \rightarrow \mathcal{F}^{\otimes p}$, often referred to as the {\em braiding} morphism, which acts by permuting the tensor factors. For $1\leq i\leq p$ let $\sigma_i\in\mathfrak{S}_p$ be the cycle $(i,i+1,\ldots,p)$ and define
\[
{\rm Alt}_p=\sum_{i=1}^p(-1)^{p-i} \tau_{\sigma_i} :\mathcal{F}^{\otimes p} \longrightarrow \mathcal{F}^{\otimes p}.
\]
By construction, ${\rm Alt}_p$ is alternating and $G$-equivariant. Thus, if $\pi_p:\mathcal{F}^{\otimes p}\twoheadrightarrow \bigwedge^p\mathcal{F}$ denotes the natural quotient map, the universal property of exterior powers implies that there exists a unique $G$-morphism $\Delta_p$ that makes the following diagram commute.
\[
\begin{tikzcd}[column sep=huge]
\mathcal{F}^{\otimes p} \arrow[r, "{\rm Alt}_p"] \arrow[d, "\pi_p"] & \mathcal{F}^{\otimes p-1} \otimes \mathcal{F} \arrow[d, "\pi_{p-1} \otimes \text{id}"] \\
\bigwedge^p \mathcal{F} \arrow[r, "\Delta_p"] & \bigwedge^{p-1} \mathcal{F} \otimes \mathcal{F}.
\end{tikzcd}
\]

It should be noted that locally on the level of sections $\Delta_p$ is given by
\[
s_1\wedge\cdots\wedge s_p\mapsto \sum_{i=1}^{p}(-1)^{p-i}s_1\wedge\cdots\wedge s_{i-1}\wedge s_{i+1}\wedge\cdots \wedge s_p \otimes s_i.
\]
\begin{definition}\label{def:contr}
The {\em contraction map} associated to a morphism $f:\mathcal{F}\rightarrow\mathcal{G}$ in ${\rm QCoh}(X)$ is
\[
\delta_{f,p}:\bigwedge^p\mathcal{F}\rightarrow\bigwedge^{p-1}\mathcal{F}\otimes \mathcal{G},\;\delta_{f,p}=({\rm id}_{\bigwedge^{p-1}\mathcal{F}}\otimes f)\circ \Delta_p.
\]
\end{definition}

By construction, contraction maps associated to $G$-morphisms are $G$-morphisms themselves. This allows us to extend a fundamental ingredient of the construction of Koszul complexes to the $G$-equivariant setting.

\begin{proposition}\label{prop:contrprop}
A short exact sequence of vector bundles in ${\rm QCoh}_G(X)$ 
\[
0\rightarrow \mathcal{H}\overset{g}{\rightarrow}\mathcal{F}\overset{f}{\rightarrow}\mathcal{G}\rightarrow 0
\]
with ${\rm rank}(\mathcal{G})=1$, gives rise to a short exact sequence in ${\rm QCoh}_G(X)$ 
\[
0\rightarrow \bigwedge^{p}\mathcal{H}\overset{\wedge^p(g)}{\longrightarrow}\bigwedge^{p}\mathcal{F}\overset{\delta_{f,p}}{\longrightarrow}\bigwedge^{p-1}\mathcal{H}\otimes\mathcal{G}\rightarrow 0.
\]
\end{proposition}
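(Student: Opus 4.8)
The plan is to decouple the statement into two logically independent claims: exactness of the displayed sequence as a sequence of quasicoherent sheaves, and the fact that all of its objects and arrows belong to ${\rm QCoh}_G(X)$. Exactness is a local, indeed fibrewise, assertion, since exactness of a complex of quasicoherent sheaves may be tested on stalks; the equivariant enhancement, by contrast, will follow almost formally from the functorial results already at our disposal.

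First I would establish exactness by reducing to a local computation. As $0\to\mathcal{H}\overset{g}{\rightarrow}\mathcal{F}\overset{f}{\rightarrow}\mathcal{G}\to 0$ is a short exact sequence of vector bundles, it splits over a suitable affine cover: on such an open $U$ there is a section $s\in\mathcal{F}(U)$ with $f(s)$ a local generator of $\mathcal{G}$, giving $\mathcal{F}|_U\cong g(\mathcal{H})|_U\oplus\mathcal{O}_U\!\cdot\! s$. The hypothesis ${\rm rank}(\mathcal{G})=1$ enters precisely here: it forces $\bigwedge^j\mathcal{G}=0$ for $j\geq 2$, collapsing the expansion of $\bigwedge^p(\mathcal{H}\oplus\mathcal{G})$ so that every local section of $\bigwedge^p\mathcal{F}$ is uniquely $\omega+\eta\wedge s$ with $\omega\in\bigwedge^p\mathcal{H}|_U$ and $\eta\in\bigwedge^{p-1}\mathcal{H}|_U$. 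Feeding this into the local formula for $\Delta_p$ recorded after Definition \ref{def:contr} and using $f\circ g=0$, every term in $\delta_{f,p}(\omega)$ acquires a factor $f(g(-))=0$, whence $\delta_{f,p}(\omega)=0$, while only the last summand of $\delta_{f,p}(\eta\wedge s)$ survives and yields $\eta\otimes f(s)$. Thus locally $\wedge^p(g)$ identifies $\bigwedge^p\mathcal{H}$ with the first summand and $\delta_{f,p}$ maps the complement isomorphically onto $\bigwedge^{p-1}\mathcal{H}|_U\otimes\mathcal{G}|_U$, so the sequence takes the split shape $0\to A\to A\oplus B\to B\to 0$ and is exact; as this holds on a cover, exactness follows globally.

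It remains to install the $G$-structure. The objects $\bigwedge^p\mathcal{H}$ and $\bigwedge^p\mathcal{F}$ carry canonical $G$-linearizations by Proposition \ref{prop:exGfunctors}(2), and $\bigwedge^{p-1}\mathcal{H}\otimes\mathcal{G}$ does so by combining (2) with the tensor $G$-functor of (1). Since $g$ and $f$ are $G$-morphisms, functoriality of the lifted exterior power makes $\wedge^p(g)$ a $G$-morphism, and the observation following Definition \ref{def:contr}, that a contraction associated to a $G$-morphism is again a $G$-morphism, makes $\delta_{f,p}$ a $G$-morphism. Finally, the corestriction of $\delta_{f,p}$ to $\bigwedge^{p-1}\mathcal{H}\otimes\mathcal{G}$ stays equivariant because this subsheaf is the image of the $G$-morphism $\wedge^{p-1}(g)\otimes{\rm id}_{\mathcal{G}}$, hence a $G$-subsheaf by the abelian structure of Lemma \ref{lem:abcat}.

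The only genuinely non-formal point, and the step I expect to be the main obstacle, is verifying that the image of $\delta_{f,p}$ lands in $\bigwedge^{p-1}\mathcal{H}\otimes\mathcal{G}$ rather than merely in $\bigwedge^{p-1}\mathcal{F}\otimes\mathcal{G}$; this is exactly where ${\rm rank}(\mathcal{G})=1$ is indispensable, and it is what simultaneously makes the stated target well defined and drives the exactness. Everything else is bookkeeping built from the earlier propositions.
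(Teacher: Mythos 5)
Your proof is correct and follows essentially the same route as the paper's: both reduce the equivariant statement to the non-equivariant one (the maps are $G$-morphisms, so kernels and images are $G$-subsheaves by Lemma \ref{lem:abcat}), and both establish exactness via the local splitting of the sequence together with the collapse of $\bigwedge^p(\mathcal{H}\oplus\mathcal{G})$ forced by ${\rm rank}(\mathcal{G})=1$. The only difference is cosmetic: you spell out the local computation of $\delta_{f,p}$ on sections $\omega+\eta\wedge s$, which the paper instead sketches and delegates to the cited references.
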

%5{\color{blue}
%First we show by induction on $p$ that ${\rm Im}\delta_{f,p} = \bigwedge^{p-1}\mathcal{H}\otimes\mathcal{G}$. For $p=1$ this is clear; if $\delta_{f,p-1}:\bigwedge^{p-1}\mathcal{F}\rightarrow \bigwedge^{p-2}\mathcal{H}\otimes\mathcal{G}$ is surjective, consider the commutative diagram.
%\[
%\begin{tikzcd}
%\bigwedge^{p-1}\mathcal{F}\otimes \mathcal{H}
% \arrow[r,"\delta_{f,p-1}\otimes {\rm id}_{\mathcal{H}}"] 
%\arrow[d,"m"'] 
%& 
%[2cm] \bigwedge^{p-2}\mathcal{H}\otimes\mathcal{G}\otimes\mathcal{H}
%\arrow[d,"(\overline{m}\;\otimes {\rm id}_{\mathcal{G}})\circ ({\rm switch}) "]
% \\
%\bigwedge^p\mathcal{F}\arrow[r,"\delta_{f,p}"] 
%& 
%\bigwedge^{p-1}\mathcal{F}\otimes\mathcal{G}
%\end{tikzcd}
%\]
%By the inductive hypothesis, the image of the composition of the rightmost vertical arrow with the top arrow is $\bigwedge^{p-1}\mathcal{H}\otimes\mathcal{G}$. 
%}

\begin{proof}
The contraction map $\delta_{f,p}$ is a $G$-map, and by Lemma \ref{lem:abcat} its kernel, image and cokernel are in ${\rm QCoh}_G(X)$. It thus suffices to prove the statement in the non-equivariant case; this is well known and can be found in \cite[3.2]{MR1644323} or \cite[4.13]{MR1335917}. Indicatively, its proof goes as follows: if $\mathcal{U}=\{U_a\}_{a\in A}$ is an affine open cover of $X$, then one has, for all $a\in A$, split short exact sequences of $\mathcal{O}_X(U_a)$-modules
$0\rightarrow \mathcal{H}\mid_{U_a}\overset{g\mid_{U_a}}{\rightarrow}\mathcal{F}\mid_{U_a}\overset{f\mid_{U_a}}{\rightarrow}\mathcal{G}\mid_{U_a}\rightarrow 0$. Then
\[
\bigwedge^p\mathcal{F}\mid_{U_a}\cong \bigoplus_{m+n=p}\bigwedge^{m}\mathcal{H}\mid_{U_a}\otimes_{\mathcal{O}_X(U_a)}\bigwedge^{n}\mathcal{G}\mid_{U_a}
\cong
\bigwedge^{p}\mathcal{H}\mid_{U_a}
\oplus
 \bigwedge^{p-1}\mathcal{H}\mid_{U_a}\otimes_{\mathcal{O}_X(U_a)}\mathcal{G}\mid_{U_a},
\]
since $\bigwedge^{n}\mathcal{G}\mid_{U_a}=0$ for all $n>{\rm rank}(\mathcal{G})=1$, with $\wedge^p(g)\mid_{U_a}$ the inclusion of the first factor and $\delta_{f,p}\mid_{U_a}$ the projection onto the second. Hence $\ker \delta_{f,p}\cong \bigwedge^{p}\mathcal{H}\text{ and }{\rm Im}{\delta_{f,p}}=\bigwedge^{p-1}\mathcal{H}\otimes\mathcal{G}.
$
\end{proof}

\subsection{The Koszul complex}
Let $L$ be a very ample line bundle on $X$, and let 
\[
V=H^0(L)=H^0(X,L)\text{ and }S={\rm Sym}(V)=\bigoplus_{q\geq 0}{\rm Sym}^q(V).
\]
\begin{definition}\label{def:Koszul}
The Koszul complex associated to the pair $(X,L)$ is the complex of graded $S$-modules 
\[
\cdots
\overset{d_{p+2,q-2}}{\longrightarrow} 
\bigwedge^{p+1}V\otimes H^0(L^{\otimes (q-1)})
\overset{d_{p+1,q-1}}{\longrightarrow} 
\bigwedge^{p}V\otimes H^0(L^{\otimes q})\overset{d_{p,q}}{\longrightarrow} 
\bigwedge^{p-1}V\otimes H^0(L^{\otimes (q+1)})\overset{d_{p-1,q+1}}{\longrightarrow}\cdots
\]
with differentials given by
\[
d_{p,q}(s_1\wedge\cdots \wedge s_p\otimes f)=\sum_{i=1}^p(-1)^is_1\wedge\cdots\wedge s_{i-1}\wedge s_{i+1}\wedge \cdots \wedge s_p\otimes (fs_i).
\]
Its cohomology $ \ker d_{p,q}/{\rm Im}\;d_{p+1,q-1}$ will be denoted by $K_{p,q}(X,L)$ and referred to as the $(p,q)$-th Koszul cohomology of the pair $(X,L)$.
\end{definition}

The Koszul complex can be built functorially from the evaluation map ${\rm ev}_L:V\otimes\mathcal{O}_X\rightarrow L$ which on the level of sections is given by $s\otimes f\mapsto fs$, see \cite[\S2.1]{MR2573635}. One then obtains formulas for the dimensions of $K_{p,q}(X,L)$ over $k$ in terms of that of the Zariski cohomology of $L$, its tensor powers and the kernel of the evaluation map, see \cite[Proposition 2.5]{MR2573635} or \cite[Proposition 3.2]{MR3729076}. This section revisits the construction to verify that if $L\in{\rm QCoh}_G(X)$, then every step is $G$-equivariant; our aim is to generalize the stated dimension formulas into relationships between the induced $G$-representations.\\

\subsection{Equivariant Koszul cohomology}
Let $L\in{\rm QCoh}_G(X)$ be a very ample line bundle on $X$, so that by Proposition \ref{prop:exGfunctors}, $V=H^0(L)$ is a $kG$-module and $V\otimes \mathcal{O}_X\in{\rm QCoh}_G(X)$. Let 
\[
\Phi:{\rm End}_{kG}(V)\rightarrow{\rm Hom}_{{\rm QCoh}_G(X)}(V\otimes \mathcal{O}_X,L)
\]
be the bijection associated to the adjunction $-\otimes \mathcal{O}_X\dashv H^0(-)$ of Proposition \ref{prop:G-adj}.

\begin{definition}
The map $\Phi({\rm id}_V):V\otimes\mathcal{O}_X\rightarrow L$ will be denoted by ${\rm ev}_L$ or simply ${\rm ev}$ and referred to as the evaluation map associated to $L$. Its kernel will be denoted by $\mathcal{M}_L$ and referred to as the kernel bundle or Lazarsfeld bundle.
\end{definition}
In what follows, we write $[W]$ for the virtual representation corresponding to a $kG$-module $W$.

\begin{theorem}\label{th:main}
The Koszul complex associated to a pair $(X,L)$ with $L$ very ample and $L\in{\rm QCoh}_G(X)$ is a complex of $kG$-modules; the virtual representation of its cohomology satisfies the identity
\[
[K_{p,q}(X,L)]
=[H^0\big(\bigwedge^p\mathcal{M}_L\otimes L^{\otimes q}\big)]
-[H^0\big(\bigwedge^{p+1}V\otimes  L^{\otimes (q-1)}\big)]
+[H^0\big( \bigwedge^{p+1}\mathcal{M}_L\otimes L^{\otimes (q-1)}\big)].
\]
\end{theorem}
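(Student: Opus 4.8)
The plan is to realize every term and every differential of the Koszul complex as the global sections of an explicit $G$-equivariant complex of sheaves built from the evaluation map, so that both the $kG$-module structure and the virtual-representation identity drop out of the long exact sequences attached to two short exact sequences of $G$-sheaves.

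First I would record that the complex consists of $kG$-modules. By Proposition \ref{prop:exGfunctors}, $V=H^0(L)$ is a $kG$-module, each tensor power $L^{\otimes q}$ lies in ${\rm QCoh}_G(X)$, and hence each $H^0(L^{\otimes q})$ is a $kG$-module; applying $\bigwedge^p$ to $V$ and tensoring endows the terms $\bigwedge^pV\otimes H^0(L^{\otimes q})$ with a $kG$-structure. The essential point is to identify the differential $d_{p,q}$ with the map on global sections induced by the twisted contraction $\delta_{{\rm ev},p}\otimes{\rm id}_{L^{\otimes q}}$. Comparing the section-level formula for $\Delta_p$ recorded before Definition \ref{def:contr} with the formula for $d_{p,q}$ in Definition \ref{def:Koszul}, the two agree up to the overall sign $(-1)^p$, which affects neither kernels nor images; since $\delta_{{\rm ev},p}$ is a $G$-morphism and $H^0$ is a $G$-functor, this exhibits $d_{p,q}$ as a $kG$-module homomorphism and settles the first assertion.

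Next I would feed the defining sequence $0\to\mathcal{M}_L\to V\otimes\mathcal{O}_X\overset{{\rm ev}}{\to}L\to 0$, which lies in ${\rm QCoh}_G(X)$ because ${\rm ev}$ is a $G$-morphism and $\mathcal{M}_L$ its kernel (Lemma \ref{lem:abcat}), into Proposition \ref{prop:contrprop}. Applying it at level $p$ and twisting by $L^{\otimes q}$ yields the $G$-equivariant sequence
\[
0\to\bigwedge^p\mathcal{M}_L\otimes L^{\otimes q}\to\bigwedge^pV\otimes L^{\otimes q}\overset{\bar\delta_p}{\to}\bigwedge^{p-1}\mathcal{M}_L\otimes L^{\otimes(q+1)}\to 0,
\]
while applying it at level $p+1$ and twisting by $L^{\otimes(q-1)}$ gives
\[
0\to\bigwedge^{p+1}\mathcal{M}_L\otimes L^{\otimes(q-1)}\to\bigwedge^{p+1}V\otimes L^{\otimes(q-1)}\overset{\bar\delta_{p+1}}{\to}\bigwedge^p\mathcal{M}_L\otimes L^{\otimes q}\to 0,
\]
where $\bar\delta_p,\bar\delta_{p+1}$ are the corestrictions of the twisted contractions onto their images. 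Since the full contraction $\delta_{{\rm ev},p}\otimes{\rm id}$ factors as $\bar\delta_p$ followed by the inclusion $\bigwedge^{p-1}\mathcal{M}_L\otimes L^{\otimes(q+1)}\hookrightarrow\bigwedge^{p-1}V\otimes L^{\otimes(q+1)}$, and $H^0$ of the latter inclusion is injective by left-exactness, one gets $\ker d_{p,q}=\ker H^0(\bar\delta_p)$, and the long exact sequence of the first displayed sequence identifies this kernel with $H^0\big(\bigwedge^p\mathcal{M}_L\otimes L^{\otimes q}\big)$ as $kG$-modules.

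Finally I would compute the image. The same factorization gives $\operatorname{Im}d_{p+1,q-1}\cong\operatorname{Im}H^0(\bar\delta_{p+1})$ as $kG$-modules, and the long exact sequence of the second displayed sequence furnishes the short exact sequence of $kG$-modules
\[
0\to H^0\big(\bigwedge^{p+1}\mathcal{M}_L\otimes L^{\otimes(q-1)}\big)\to\bigwedge^{p+1}V\otimes H^0(L^{\otimes(q-1)})\to\operatorname{Im}H^0(\bar\delta_{p+1})\to 0.
\]
Because the order of $G$ is coprime to ${\rm char}(k)$ the category of $kG$-modules is semisimple, so in the representation ring $[K_{p,q}]=[\ker d_{p,q}]-[\operatorname{Im}d_{p+1,q-1}]$; substituting the two identifications and using $\bigwedge^{p+1}V\otimes H^0(L^{\otimes(q-1)})=H^0\big(\bigwedge^{p+1}V\otimes L^{\otimes(q-1)}\big)$ produces the stated identity. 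I expect the main obstacle to be the bookkeeping of the second paragraph — verifying that $d_{p,q}$ really is $H^0(\delta_{{\rm ev},p}\otimes{\rm id})$ as $kG$-maps and that the two long exact sequences are sequences of $kG$-modules with $kG$-linear boundary maps — rather than any deep input; once the differentials are matched, the entire argument is a formal consequence of the $G$-functoriality established in Propositions \ref{prop:exGfunctors} and \ref{prop:contrprop}.
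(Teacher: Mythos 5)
Your proposal is correct and follows essentially the same route as the paper: both build the complex from the $G$-equivariant evaluation sequence $0\to\mathcal{M}_L\to V\otimes\mathcal{O}_X\to L\to 0$, apply Proposition \ref{prop:contrprop} and the twist $-\otimes L^{\otimes q}$ to get the two short exact sequences of $G$-sheaves, identify $d_{p,q}$ with $H^0$ of the composed contraction, and compute $\ker d_{p,q}$ and $\operatorname{Im}d_{p+1,q-1}$ from left-exactness of $H^0$. Your explicit handling of the overall sign $(-1)^p$ between $\Delta_p$ and the Koszul differential is a small point the paper glosses over, and your appeal to semisimplicity is harmless though not needed (additivity in the representation ring over short exact sequences suffices); otherwise the two arguments coincide.
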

\begin{proof}
By construction, ${\rm ev}_L$ is a $G$-morphism, and by Lemma \ref{lem:abcat}, $\mathcal{M}_L\in{\rm QCoh}_G(X)$. Thus
\[
0\rightarrow \mathcal{M}_L\overset{i}{\rightarrow} V\otimes \mathcal{O}_X\overset{{\rm ev}}{\rightarrow} L\rightarrow 0
\]
is a short exact sequence in ${\rm QCoh}_G(X)$ and by Proposition \ref{prop:contrprop}, the same is true for
\[
0\rightarrow 
\bigwedge^p\mathcal{M}_L
\overset{\wedge^p(i)}{\rightarrow}
\bigwedge^p(V\otimes \mathcal{O}_X)
\overset{\delta_{p,{\rm ev}}}{\rightarrow}\bigwedge^{p-1} \mathcal{M}_L \otimes L\rightarrow 0,
\]
where $\delta_{{\rm ev},p}$ is the contraction map of Definition \ref{def:contr} associated to the evaluation map. Applying the exact $G$-functor $-\otimes L^{\otimes q}$, see Proposition \ref{prop:exGfunctors}, and setting $f_{p,q}=\wedge^p(i)\otimes{\rm id}_{L^{\otimes q}},\;g_{p,q}=\delta_{{\rm ev},p}\otimes {\rm id}_{L^{\otimes q}}$ gives that
\[
0\rightarrow 
\bigwedge^{p}\mathcal{M}_L\otimes L^{\otimes q}
\overset{f_{p,q}}{\longrightarrow}
\bigwedge^{p}V\otimes L^{\otimes q}
\overset{g_{p,q}}{\longrightarrow}
\bigwedge^{p-1}\mathcal{M}_L\otimes L^{\otimes (q+1)}
\rightarrow 0
 \tag{*}\label{eq:ses1}
\]
is also a short exact sequence in ${\rm QCoh}_G(X)$. The compositions
\[
\delta_{p,q}:
\bigwedge^{p}V\otimes L^{\otimes q}
\overset{ g_{p,q}}{\longrightarrow} 
\bigwedge^{p-1}\mathcal{M}_L\otimes L^{\otimes (q+1)}
\overset{f_{p-1,q+1}}{\longrightarrow}
\bigwedge^{p-1}V\otimes L^{\otimes q+1};
\]
are $G$-morphisms, and thus Proposition \ref{prop:exGfunctors} implies that $d_{p,q}=H^0(\delta_{p,q})$ are maps of $kG$-modules. They agree with the Koszul differentials of Definition \ref{def:Koszul} and thus the complex of $kG$-modules
\[
\cdots
\overset{d_{p+2,q-2}}{\longrightarrow} 
\bigwedge^{p+1}V\otimes H^0(L^{\otimes (q-1)})
\overset{d_{p+1,q-1}}{\longrightarrow} 
\bigwedge^{p}V\otimes H^0(L^{\otimes q})\overset{d_{p,q}}{\longrightarrow} 
\bigwedge^{p-1}V\otimes H^0(L^{\otimes (q+1)})\overset{d_{p-1,q+1}}{\longrightarrow}\cdots
\]
is exactly the Koszul complex as defined above.\\

One then applies the left exact $G$-functor $H^0(-)$ to (\ref{eq:ses1}) to obtain the exact sequence
\[
0\rightarrow 
H^0\big(\bigwedge^{p}\mathcal{M}_L\otimes L^{\otimes q}\big)
\overset{H^0(f_{p,q})}{\longrightarrow}
H^0\big(\bigwedge^{p}V\otimes L^{\otimes q}\big)
\overset{H^0(g_{p,q})}{\longrightarrow}
 H^0\big(\bigwedge^{p-1}\mathcal{M}_L\otimes L^{\otimes (q+1)}\big)
\]
and computes
\begin{align*}
\ker d_{p,q}&=\ker H^0(f_{p-1,q+1}\circ g_{p,q})=\ker H^0(g_{p,q})={\rm Im}H^0( f_{p,q})\cong H^0\big(\bigwedge^{p}\mathcal{M}_L\otimes L^{\otimes q}\big)\\
{\rm Im}d_{p+1,q-1}&={\rm Im}H^0(f_{p,q}\circ g_{p+1,q-1})\cong{\rm Im} H^0(g_{p+1,q-1})\cong \bigwedge^{p+1}V\otimes H^0(L^{\otimes (q-1)})/H^0\big( \bigwedge^{p+1}\mathcal{M}_L\otimes L^{\otimes (q-1)}\big),\\
\end{align*}
to conclude that
\[
[K_{p,q}(X,L)]
=[H^0\big(\bigwedge^p\mathcal{M}_L\otimes L^{\otimes q}\big)]
-[H^0\big(\bigwedge^{p+1}V\otimes  L^{\otimes (q-1)}\big)]
+[H^0\big( \bigwedge^{p+1}\mathcal{M}_L\otimes L^{\otimes (q-1)}\big)].
\]
\end{proof}

%
%\begin{eqnarray*}
%[K_{p,1}]-[K_{p-1,2}]&=&
%-[H^0\big(\bigwedge^{p+1}V\otimes  \mathcal{O}_X\big)]
%+[H^0\big( \bigwedge^{p+1}\mathcal{M}_L\otimes \mathcal{O}_X\big)]-[H^0\big(\bigwedge^{p-1}\mathcal{M}_L\otimes \Omega_X^{\otimes 2}\big)]
%+[H^0\big(\bigwedge^{p}V\otimes  \Omega_X\big)]\\
%&=&
%-[\bigwedge^{p+1}V]
%+[H^0\big( \bigwedge^{p+1}\mathcal{M}_L\big)]-[H^0\big(\bigwedge^{p-1}\mathcal{M}_L\otimes \Omega_X^{\otimes 2}\big)]
%+[\bigwedge^{p}V\otimes V]
%\end{eqnarray*}

\section{Explicit formulas for canonical curves}
Let $(X,\mathcal{O}_X)$ be a smooth, projective, irreducible curve of genus $g\geq 4$ over an algebraically closed field $k$, and let $G$ be a finite group of order coprime to the characteristic of $k$ acting faithfully on the right on $X$. 

\subsection{Equivariant structure of the canonical bundle} Let $L=\Omega_X$ be the canonical bundle on $X$ and write $\{c_g\}_{g\in G}$ for the $G$-linearization of $\Omega_X$. If $d:\mathcal{O}_X\rightarrow\Omega_X$ denotes the universal derivation, then for all $g\in G$, the maps
\[
D_g:\mathcal{O}_X\overset{c_g^{-1}}{\longrightarrow}\rho_g^*\mathcal{O}_X\overset{\rho^*_g(d)}{\longrightarrow}\rho_g^*\Omega_X
\]
are $k$-derivations. The universal property of the pair $(\Omega_X,d)$ then gives the existence of unique isomorphisms $a_g:\rho_g^*\Omega_X\rightarrow \Omega_X$, satisfying the cocycle condition of Definition \ref{def:G-sheaf}. Thus $\Omega_X\in{\rm QCoh}_G(X)$.

\subsection{Virtual Differences of Koszul Representations}. For $p,q\in\mathbb{Z}_{\geq 0}$, let $K_{p,q}(X,\Omega_X)$ be the Koszul cohomology groups of Definition \ref{def:Koszul}. Then by \cite[3.6]{MR3729076} $K_{0,0}(X,\Omega_X)=k=K_{g-2,3}(X,\Omega_X)$ and $K_{p,q}(X,\Omega_X)=0$ for all other values of $(p,q)$ except when $q=1$ and $1\leq p\leq g-3$, or $q=2$ and $1\leq p\leq g-3$. This leads us to consider the following differences of virtual representations, mimicking the approach of calculating differences of $k$-vector space dimensions.

\begin{corollary}\label{cor:bettivanish}
Let $V=H^0(\Omega_X)$ and let $\mathcal{M}_{\Omega_X}=\ker{\rm ev}_{\Omega_X}$. For $1\leq p\leq g-3$,
\[
[K_{p,1}(X,\Omega_X)]-[K_{p-1,2}(X,\Omega_X)]=
[\bigwedge^{p}V\otimes V]-[\bigwedge^{p+1}V]
-[H^0\big(\bigwedge^{p-1}\mathcal{M}_{\Omega_X}\otimes \Omega_X^{\otimes 2}\big)].
\]
\end{corollary}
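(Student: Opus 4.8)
The plan is to derive the identity by applying Theorem \ref{th:main} twice and subtracting. First I would specialize the main theorem to $L=\Omega_X$ in the two relevant bidegrees. Using $\Omega_X^{\otimes 0}=\mathcal{O}_X$ together with the $kG$-isomorphisms $H^0(\bigwedge^{p+1}V\otimes\mathcal{O}_X)\cong\bigwedge^{p+1}V$ (as $H^0(\mathcal{O}_X)$ is the trivial module $k$) and $H^0(\bigwedge^{p}V\otimes\Omega_X)\cong\bigwedge^{p}V\otimes V$, both instances of the equivariance established in Proposition \ref{prop:exGfunctors}, the theorem yields
\[
[K_{p,1}(X,\Omega_X)] = [H^0(\textstyle\bigwedge^p\mathcal{M}_{\Omega_X}\otimes\Omega_X)] - [\textstyle\bigwedge^{p+1}V] + [H^0(\textstyle\bigwedge^{p+1}\mathcal{M}_{\Omega_X})],
\]
\[
[K_{p-1,2}(X,\Omega_X)] = [H^0(\textstyle\bigwedge^{p-1}\mathcal{M}_{\Omega_X}\otimes\Omega_X^{\otimes 2})] - [\textstyle\bigwedge^p V\otimes V] + [H^0(\textstyle\bigwedge^p\mathcal{M}_{\Omega_X}\otimes\Omega_X)].
\]

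Subtracting, the common term $[H^0(\bigwedge^p\mathcal{M}_{\Omega_X}\otimes\Omega_X)]$ cancels and I am left with
\[
[K_{p,1}]-[K_{p-1,2}] = [\textstyle\bigwedge^p V\otimes V] - [\textstyle\bigwedge^{p+1}V] + [H^0(\textstyle\bigwedge^{p+1}\mathcal{M}_{\Omega_X})] - [H^0(\textstyle\bigwedge^{p-1}\mathcal{M}_{\Omega_X}\otimes\Omega_X^{\otimes 2})].
\]
Comparing with the claimed formula, the entire content reduces to establishing the vanishing $H^0(\bigwedge^{p+1}\mathcal{M}_{\Omega_X})=0$ for $1\le p\le g-3$, which I expect to be the only genuine obstacle; everything preceding it is bookkeeping forced by Theorem \ref{th:main}.

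For the vanishing I would invoke Theorem \ref{th:main} once more, now in the strand $q=0$ and degree $p+1$. Since $\deg\Omega_X^{-1}=2-2g<0$ forces $H^0(\Omega_X^{-1})=0$, the middle term drops out and the theorem collapses to
\[
[K_{p+1,0}(X,\Omega_X)] = [H^0(\textstyle\bigwedge^{p+1}\mathcal{M}_{\Omega_X})] + [H^0(\textstyle\bigwedge^{p+2}\mathcal{M}_{\Omega_X}\otimes\Omega_X^{-1})].
\]
By the vanishing table quoted from \cite[3.6]{MR3729076} (or directly, since the relevant Koszul differential $\bigwedge^{p+1}V\to\bigwedge^{p}V\otimes V$ is injective for $p+1\ge 1$) the left-hand side is $0$ throughout the range $2\le p+1\le g-2$. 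Passing to $k$-dimensions turns the right-hand side into a sum of two nonnegative integers equal to zero, so each summand vanishes; in particular $H^0(\bigwedge^{p+1}\mathcal{M}_{\Omega_X})=0$ as a $kG$-module. Substituting this into the displayed difference produces exactly the asserted identity. It is worth stressing that this nonnegativity-of-dimensions argument is what makes the step clean: it lets one deduce the vanishing from the already-known scalar Koszul data, sidestepping the heavier and more natural routes through semistability of $\mathcal{M}_{\Omega_X}$ or Bott vanishing on the canonical image $X\hookrightarrow\mathbb{P}(V^*)$.
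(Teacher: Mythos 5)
Your proposal is correct, and its skeleton is exactly the paper's: specialize Theorem \ref{th:main} to the bidegrees $(p,1)$ and $(p-1,2)$, cancel the common term $[H^0\big(\bigwedge^{p}\mathcal{M}_{\Omega_X}\otimes\Omega_X\big)]$, and invoke the vanishing $H^0\big(\bigwedge^{p+1}\mathcal{M}_{\Omega_X}\big)=0$. The one genuine difference is how that vanishing is handled: the paper simply cites \cite[Remark 2.6]{MR2573635}, whereas you derive it internally, either from the $q=0$ strand of Theorem \ref{th:main} combined with $K_{p+1,0}(X,\Omega_X)=0$, or directly from injectivity of the comultiplication $\bigwedge^{p+1}V\rightarrow\bigwedge^{p}V\otimes V$ (which is in fact essentially the argument behind the cited remark, so your ``direct'' variant reconstructs it). Your nonnegativity-of-dimensions step is sound precisely because $[H^0\big(\bigwedge^{p+1}\mathcal{M}_{\Omega_X}\big)]$ and $[H^0\big(\bigwedge^{p+2}\mathcal{M}_{\Omega_X}\otimes\Omega_X^{-1}\big)]$ are classes of honest modules rather than virtual ones, so a vanishing sum forces each summand to vanish as a $kG$-module. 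What your route buys is self-containedness: the corollary then rests only on Theorem \ref{th:main} and the Koszul vanishing table already quoted before the statement. The small price is that you implicitly extend Theorem \ref{th:main} to $q=0$, where $L^{\otimes(q-1)}=\Omega_X^{-1}$ is a negative power; this is harmless, since tensoring with the dual line bundle is still an exact $G$-functor and $H^0(\Omega_X^{-1})=0$, but it deserves the one-line remark you essentially give.
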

\begin{proof}
Noting that $H^0\big( \bigwedge^{p+1}\mathcal{M}_L\big)$ vanishes, see \cite[Remark 2.6]{MR2573635}, the result follows by Theorem \ref{th:main}.
\end{proof}
To calculate the right hand side of the above equation one employs the theory of $G$-equivariant Euler characteristics. To this end, we first review the results of Ellingsrud and L\o nsted from \cite{MR589254} which provide explicit decompositions of the virtual representations $[H^0(\mathcal{E})]-[H^1(\mathcal{E})]$ for $G$-equivariant vector bundles $\mathcal{E}$ on $X$. For a more recent treatment of the subject and generalizations, see \cite{MR2205171}.

\subsection{ Equivariant Euler characteristics.}

Let $\pi:X\rightarrow Y=X/G$ be the quotient map of the action of $G$ on $X$ and let $G_P={\rm Stab}_G(P)$ be the decomposition group of $P\in X$. It is a cyclic group of order equal to the ramification index of $P$; the latter is an invariant of the orbit of $P$ and thus may be denoted by $e_Q$, with $Q=\pi(P)\in Y$. Write $\{\chi_P^d:\;0\leq d\leq e_Q-1\}$ for the irreducible characters of $G_P$, where $\chi_P$ is the fundamental character of $G_P$ obtained by its action on the local uniformizer $u_P$ at $P$ considered modulo $\mathfrak{m}_P^2=\langle u_P^2\rangle$. Note that for any $P\in\pi^{-1}(Q)$, one has that $e_Q>1$, i.e. $G_P$ is non-trivial, if and only if $P\in\mathfrak{r}\subset X$, the ramification locus of $\pi$; equivalently $Q\in\mathfrak{b}\subset Y$, the branch locus of $\pi$. 

\begin{definition}\label{def:localmult}
Let $Q\in \mathfrak{b}$, let $P\in\pi^{-1}(Q)\subseteq\mathfrak{r}$ and let $\mathcal{E}\in{\rm QCoh}_G(X)$ be a vector bundle. 
\begin{enumerate}
\item The completed fiber of $\mathcal{E}$ over $P$ is the $kG_P$-module
\[
V_{P}(\mathcal{E})=(\mathcal{E}\otimes_{C_Q}\widehat{\mathcal{O}}_{X,P})\otimes_{\widehat{\mathcal{O}}_{X,P}} k,
\text{ where }
C_Q=\mathcal{O}_X\otimes_{\mathcal{O}_Y}\widehat{\mathcal{O}}_{Y,Q}.
\]
\item The local multiplicities of an irreducible $kG$-module $I$ with respect to $\mathcal{E}$ are the integers
\[
n_{d,Q,I}(\mathcal{E})=\langle \chi_P^d, V_P(\mathcal{E})\otimes {\rm Res}^G_{G_P}(I)\rangle,
\text{ where }
0\leq d\leq e_Q-1.
\]
In the special case that $\mathcal{E}=\mathcal{O}_X$, we write $n_{d,Q,I}$ for $n_{d,Q,I}(\mathcal{O}_X)=\langle \chi_P^d, {\rm Res}^G_{G_P}(I)\rangle$.
\end{enumerate}
\end{definition}

\vspace{3mm}

\noindent{\em Remark.} The terminology {\em completed fiber} is justified as follows. By the discussion following \cite[Lemma 3.4]{MR589254},
$C_Q$ is isomorphic to the product of the complete local rings $\widehat{\mathcal{O}}_{X,P'}$ with $P'$ running over all points in $\pi^{-1}(Q)$. Thus $\mathcal{E}\otimes_{C_Q}\widehat{\mathcal{O}}_{X,P}\cong
\mathcal{E}\otimes_{\prod\widehat{\mathcal{O}}_{X,P'}}\widehat{\mathcal{O}}_{X,P}
\cong
\widehat{\mathcal{E}_P}$ is the completion of the stalk of $\mathcal{E}$ at the point $P$ and $V_{P}(\mathcal{E})\cong \widehat{\mathcal{E}}_P\otimes_{\widehat{\mathcal{O}}_{X,P}} \widehat{\mathcal{O}}_{X,P}/\mathfrak{m}_P$ is its fiber over $P$. 

\vspace{1.5mm}
 
Let ${\rm Rep}_k(G)$ be the representation ring of $G$ over $k$ and let $[W]\in{\rm Rep_k}(G)$ denote the virtual representation corresponding to a $kG$-module $W$. 
\begin{definition}\label{def:Euler}
The equivariant Euler characteristic of a $G$-equivariant vector bundle $\mathcal{E} \in {\rm QCoh}_G(X)$ is the virtual representation
\[
\chi_G(\mathcal{E})=[H^0(\mathcal{E})]-[H^1(\mathcal{E})]\in{\rm Rep}_k(G).
\]
\end{definition}

The reader is referred to  {\cite[(3.7)]{MR589254}} for the proof of the following.
\begin{theorem} \label{th:EL}
If $\mathcal{E}\in{\rm QCoh}_G(X)$ is a vector bundle on $X$, then
\[
\chi_G(\mathcal{E})
=
  \left( 
\frac{1}{|G|} \deg(\mathcal{E})+(1-g_Y) \mathrm{rk}(\mathcal{E})
  \right)[kG]
  - 
  \sum_{I \in \mathrm{Irr}(G)} 
  \sum_{Q\in Y}
  \sum_{d=1}^{e_Q}
\left(
1 -\frac{d}{e_Q}
\right)
n_{d,Q,I}(\mathcal{E}) [I],
\]
where $Y=X/G$, $g_Y$ is the genus of $Y$, ${\rm Irr}(G)$ is the set of irreducible representations of $G$, $e_Q$ is the ramification index of $P\in\pi^{-1}(Q)$ and the $n_{d,Q,I}(\mathcal{E})$ are as in Definition \ref{def:localmult}. 
\end{theorem}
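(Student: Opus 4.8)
The plan is to exploit the semisimplicity of $kG$, which holds because $|G|$ is coprime to $\operatorname{char}(k)$, so that the virtual representation $\chi_G(\mathcal{E})$ is completely determined by the multiplicities $\langle I, \chi_G(\mathcal{E})\rangle$ for $I\in\operatorname{Irr}(G)$. Since these multiplicities are additive on virtual representations, it suffices to compute $\langle I, H^0(\mathcal{E})\rangle-\langle I, H^1(\mathcal{E})\rangle$ for each $I$. The key reduction is to descend everything to the quotient curve $Y=X/G$: for each irreducible $I$ I would form the $G$-equivariant sheaf $\mathcal{E}\otimes_k I^*$ on $X$ (with the diagonal action) and set $\mathcal{E}_I=\big(\pi_*(\mathcal{E}\otimes_k I^*)\big)^G$, a coherent sheaf on $Y$. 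Because $\pi$ is finite the functor $\pi_*$ is exact and preserves cohomology, and because $|G|$ is coprime to $\operatorname{char}(k)$ the invariants functor $(-)^G$ is exact by Maschke's theorem and commutes with cohomology; combining these gives
\[
H^i(Y,\mathcal{E}_I)\cong\big(H^i(X,\mathcal{E})\otimes_k I^*\big)^G\cong\operatorname{Hom}_G\big(I,H^i(X,\mathcal{E})\big),
\]
so that $\langle I,\chi_G(\mathcal{E})\rangle=\chi(Y,\mathcal{E}_I)$ is an ordinary Euler characteristic on the curve $Y$.

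Next I would apply classical Riemann--Roch on the smooth curve $Y$,
\[
\chi(Y,\mathcal{E}_I)=\deg(\mathcal{E}_I)+\operatorname{rk}(\mathcal{E}_I)\,(1-g_Y),
\]
which reduces the theorem to identifying $\operatorname{rk}(\mathcal{E}_I)$ and $\deg(\mathcal{E}_I)$. The rank is computed at the generic point: $K(X)/K(Y)$ is Galois with group $G$, so by the normal basis theorem $K(X)\cong K(Y)\otimes_k kG$ as a $K(Y)[G]$-module, and every generically free semilinear $G$-action is trivializable (Speiser/Hilbert 90), whence $\mathcal{E}\otimes_{\mathcal{O}_X}K(X)\cong K(X)^{\oplus\operatorname{rk}\mathcal{E}}$. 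Taking $G$-invariants of the generic fibre of $\mathcal{E}\otimes_k I^*$ and using $(kG\otimes_k I^*)^G\cong\operatorname{Hom}_G(I,kG)\cong I^*$ yields $\operatorname{rk}(\mathcal{E}_I)=\operatorname{rk}(\mathcal{E})\cdot\dim I$. This already reproduces the rank part of the coefficient of $[I]$ in the asserted formula, since $[kG]=\sum_{I}(\dim I)[I]$.

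The degree $\deg(\mathcal{E}_I)$ is where the local ramification data enters, and this is the main obstacle. Away from the branch locus $\mathfrak{b}$ the map $\pi$ is \'etale of degree $|G|$, so the unramified part contributes the main term $\tfrac{1}{|G|}\deg(\mathcal{E})\dim I$; the corrections are concentrated at each $Q\in\mathfrak{b}$ and must be extracted from the $G_P$-representation on the completed fibre $V_P(\mathcal{E})$ for $P\in\pi^{-1}(Q)$. The plan is to pass to completions $\widehat{\mathcal{O}}_{Y,Q}=k[[t]]$ and $\widehat{\mathcal{O}}_{X,P}=k[[u_P]]$ with $t=u_P^{e_Q}$ (tame, since $e_Q$ divides $|G|$ and is coprime to $\operatorname{char}(k)$), diagonalize the cyclic $G_P$-action so that $\widehat{\mathcal{E}}_P$ splits into eigenlines whose characters $\chi_P^d$ are exactly those recorded by $V_P(\mathcal{E})$, and then compute the $k[[t]]$-lattice of $G_P$-invariants inside $\widehat{\mathcal{E}}_P\otimes_k I^*$. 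An eigenvector of total weight congruent to $d$ modulo $e_Q$ must be multiplied by the minimal power $u_P^{n_0}$ with $n_0\equiv -d$ to become invariant, and since $u_P^{n_0}=t^{n_0/e_Q}$ this shifts the local degree by the fractional amount $n_0/e_Q$.

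The final bookkeeping step is to sum these fractional defects over all eigenlines and all isotypic components of $\operatorname{Res}^G_{G_P}(I)$. By the very definition $n_{d,Q,I}(\mathcal{E})=\langle\chi_P^d, V_P(\mathcal{E})\otimes\operatorname{Res}^G_{G_P}(I)\rangle$ counts precisely the eigenlines whose required shift is indexed by $d$, so after reindexing the local contribution at $Q$ assembles into $\sum_{d=1}^{e_Q}\big(1-\tfrac{d}{e_Q}\big)\,n_{d,Q,I}(\mathcal{E})$. Combining the generic main term, the rank term $(1-g_Y)\operatorname{rk}(\mathcal{E})\dim I$, and these local corrections summed over $Q\in\mathfrak{b}$, and finally repackaging $\sum_{I}(\dim I)[I]=[kG]$, produces the stated identity. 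I expect the genuine difficulty to lie entirely in the tame local computation and its reindexing: getting the congruence conventions and the direction of the character twist correct so that the fractional shifts $n_0/e_Q$ collapse exactly to the weights $1-d/e_Q$, with everything else being formal consequences of descent to $Y$ and classical Riemann--Roch.
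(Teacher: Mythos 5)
The first thing to note is that the paper does not prove Theorem \ref{th:EL} at all: it is quoted from Ellingsrud--L{\o}nsted, and the reader is referred to \cite[(3.7)]{MR589254} for the proof. So your attempt cannot be compared with an internal argument; it must stand on its own as a reconstruction of the cited result. Your strategy is the classical Chevalley--Weil-style descent, and its formal skeleton is correct: reduction to multiplicities by semisimplicity of $kG$; the isomorphism $H^i(Y,\mathcal{E}_I)\cong\operatorname{Hom}_G\big(I,H^i(X,\mathcal{E})\big)$ for $\mathcal{E}_I=\big(\pi_*(\mathcal{E}\otimes_k I^*)\big)^G$, which follows from exactness of $\pi_*$ (finite morphism) and of $(-)^G$ (Maschke); classical Riemann--Roch on $Y$; and the rank identity $\operatorname{rk}(\mathcal{E}_I)=\operatorname{rk}(\mathcal{E})\dim I$ via the normal basis theorem. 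This correctly reduces the theorem to the single identity $\deg(\mathcal{E}_I)=\tfrac{\dim I}{|G|}\deg(\mathcal{E})-\sum_{Q}\sum_{d}\big(1-\tfrac{d}{e_Q}\big)n_{d,Q,I}(\mathcal{E})$.

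The genuine gap is that this degree identity --- which is where the entire content of the theorem lies --- is only gestured at, and you defer exactly the part that decides whether the stated formula is the one you get. Two concrete points. First, you need a mechanism converting local ``fractional degree shifts'' into an actual degree: the standard device is the natural map $\pi^*\mathcal{E}_I\to\mathcal{E}\otimes_k I^*$, a generically bijective morphism of bundles of equal rank $\operatorname{rk}(\mathcal{E})\dim I$, so that $|G|\deg(\mathcal{E}_I)=\dim(I)\deg(\mathcal{E})-\operatorname{length}(\operatorname{coker})$, with the cokernel a torsion sheaf supported on the ramification locus whose length at each $P$ is computed by your eigenline analysis; without something of this kind the passage from lattice shifts to a global degree is unjustified. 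Second, and more seriously, the ``congruence conventions and direction of the character twist'' that you explicitly leave open are not a routine verification. Depending on whether $\chi_P$ is taken on the cotangent line $\mathfrak{m}_P/\mathfrak{m}_P^2$ or its dual, and whether the twist is by $I$ or $I^\vee$, the local term comes out as $\sum_{d}\big(1-\tfrac{d}{e_Q}\big)n_{d,Q,I}(\mathcal{E})$ or as $\sum_{d}\tfrac{d}{e_Q}\,n_{d,Q,I}(\mathcal{E})=\sum_{d}\big(1-\tfrac{d}{e_Q}\big)n_{d,Q,I^\vee}(\mathcal{E})$ (using $n_{d,Q,I^\vee}=n_{e_Q-d,Q,I}$), and these two expressions genuinely differ unless $\operatorname{Res}^G_{G_P}(I)$ is self-dual. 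A complete proof must pin down which variant the paper's right-action conventions produce; as written, your argument cannot distinguish the stated formula from its dual twin, and this indexing is precisely what feeds Lemma \ref{lem:nomega} and Theorem \ref{th:mainwedge} downstream.
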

Setting $G$ to be the trivial group, the ramification terms vanish and the quotient map becomes the identity, and so the result above specializes to the classical Riemann–Roch theorem for vector bundles on curves. Among its many applications, the latter may be used to calculate Euler characteristics of direct sums, tensor products and exterior powers. The goal being to explore similar techniques in the equivariant case, we first examine the behaviour of $V_P(-)$ with respect to these operations.
\begin{lemma}\label{lem:localprops}
Let $\mathcal{E}, \mathcal{E}',\mathcal{E}''$ be $G$-equivariant vector bundles.
\begin{enumerate}
\item If $0\rightarrow \mathcal{E}'\rightarrow \mathcal{E}\rightarrow\mathcal{E}''\rightarrow 0$ is a short exact sequence in ${\rm QCoh}_G(X)$, then 
$
V(\mathcal{E})\cong V(\mathcal{E}')\oplus V(\mathcal{E}'')
$.\\
\item $V_{P}(\mathcal{E}\otimes \mathcal{E}')\cong V_{P}(\mathcal{E})\otimes V_P( \mathcal{E}')$ and $V_P\big(\bigwedge^r\mathcal{E}\big)\cong \bigwedge^r V_P(\mathcal{E})$.\\
\item If $W\in kG_P$-mod, then $V_P(W\otimes \mathcal{E})\cong W\otimes V_P(\mathcal{E})$.
\end{enumerate}
\end{lemma}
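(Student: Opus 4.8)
The plan is to reduce all three statements to the completed local stalk at $P$, exploiting the identification from the remark following Definition \ref{def:localmult}: one has $V_P(\mathcal{E}) \cong \widehat{\mathcal{E}}_P \otimes_{\widehat{\mathcal{O}}_{X,P}} k$, where $\widehat{\mathcal{O}}_{X,P} \cong k[[u_P]]$ is a complete discrete valuation ring with residue field $k$, and $\widehat{\mathcal{E}}_P$ is a free $\widehat{\mathcal{O}}_{X,P}$-module of rank ${\rm rank}(\mathcal{E})$ carrying a semilinear action of the decomposition group $G_P$ (which fixes $P$) induced by the $G$-linearization of $\mathcal{E}$. Thus $V_P(-)$ is naturally isomorphic to the fiber functor $\mathcal{E} \mapsto \widehat{\mathcal{E}}_P \otimes_{\widehat{\mathcal{O}}_{X,P}} k$ with values in $kG_P$-mod, and each claim becomes a base-change identity for free modules over $\widehat{\mathcal{O}}_{X,P}$, upgraded to respect the $G_P$-action.

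For (1), I would first complete the short exact sequence at $P$. Since completion is exact on finitely generated modules over the Noetherian ring $\mathcal{O}_{X,P}$, one obtains a short exact sequence $0 \to \widehat{\mathcal{E}'}_P \to \widehat{\mathcal{E}}_P \to \widehat{\mathcal{E}''}_P \to 0$ of free $\widehat{\mathcal{O}}_{X,P}$-modules. Applying $- \otimes_{\widehat{\mathcal{O}}_{X,P}} k$ is right exact, and as $\widehat{\mathcal{E}''}_P$ is free, hence flat, $\mathrm{Tor}_1^{\widehat{\mathcal{O}}_{X,P}}(\widehat{\mathcal{E}''}_P, k) = 0$, so the resulting sequence $0 \to V_P(\mathcal{E}') \to V_P(\mathcal{E}) \to V_P(\mathcal{E}'') \to 0$ of $kG_P$-modules is exact. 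Since $|G_P|$ divides $|G|$ and is therefore coprime to ${\rm char}(k)$, Maschke's theorem makes $kG_P$ semisimple, so the sequence splits $G_P$-equivariantly and yields $V_P(\mathcal{E}) \cong V_P(\mathcal{E}') \oplus V_P(\mathcal{E}'')$.

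For (2), I would invoke the standard compatibility of the tensor product and the $r$-th exterior power with the base change $\widehat{\mathcal{O}}_{X,P} \to k$: for free $\widehat{\mathcal{O}}_{X,P}$-modules $M, N$ there are canonical isomorphisms
\[
(M \otimes N) \otimes_{\widehat{\mathcal{O}}_{X,P}} k \cong (M \otimes_{\widehat{\mathcal{O}}_{X,P}} k) \otimes_k (N \otimes_{\widehat{\mathcal{O}}_{X,P}} k), \qquad \big(\textstyle\bigwedge^r M\big) \otimes_{\widehat{\mathcal{O}}_{X,P}} k \cong \textstyle\bigwedge^r (M \otimes_{\widehat{\mathcal{O}}_{X,P}} k).
\]
Taking $M = \widehat{\mathcal{E}}_P$ and $N = \widehat{\mathcal{E}'}_P$ produces the underlying $k$-isomorphisms; what remains is $G_P$-equivariance. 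This holds because the linearizations carried by $\mathcal{E} \otimes \mathcal{E}'$ and $\bigwedge^r \mathcal{E}$ are precisely the tensor and exterior-power linearizations supplied by Proposition \ref{prop:exGfunctors}, and the base-change isomorphisms above are natural in $M, N$, hence commute with the induced diagonal $G_P$-actions. Statement (3) is handled analogously: interpreting $W \otimes \mathcal{E}$ in the completed local picture as $W \otimes_k \widehat{\mathcal{E}}_P$ with $\widehat{\mathcal{O}}_{X,P}$ acting on the second factor and $G_P$ acting diagonally, the claim is the associativity identity $(W \otimes_k \widehat{\mathcal{E}}_P) \otimes_{\widehat{\mathcal{O}}_{X,P}} k \cong W \otimes_k (\widehat{\mathcal{E}}_P \otimes_{\widehat{\mathcal{O}}_{X,P}} k)$, which is $G_P$-equivariant for the diagonal actions.

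The main obstacle throughout is bookkeeping the $kG_P$-structures: one must confirm that the natural $k$-linear base-change isomorphisms intertwine the $G_P$-actions descending from the global linearizations, rather than establishing these isomorphisms on underlying vector spaces alone. Once naturality of the base-change maps is invoked this is routine, and the only genuinely nontrivial input is the use of Maschke's theorem in (1) to promote the exact sequence of fibers to a direct sum decomposition.
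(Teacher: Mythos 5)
Your proposal is correct and takes essentially the same route as the paper: both identify $V_P(\mathcal{E})$ with the fiber $\widehat{\mathcal{E}}_P\otimes_{\widehat{\mathcal{O}}_{X,P}}k$ of the completed stalk (via the remark after Definition \ref{def:localmult}) and reduce all three claims to the compatibility of stalks, completion, and fibers with the relevant operations, with naturality supplying $G_P$-equivariance. If anything, your handling of (1) is more careful than the paper's, which simply cites compatibility with direct sums and leaves implicit both the exactness of the fiber sequence and the equivariant splitting that your Tor-vanishing and Maschke steps make explicit.
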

\begin{proof}
Using the isomorphism $V_{P}(\mathcal{E})\cong \widehat{\mathcal{E}}_P\otimes_{\widehat{\mathcal{O}}_{X,P}} \widehat{\mathcal{O}}_{X,P}/\mathfrak{m}_P$ discussed in the Remark following Definition \ref{def:localmult},  we regard $V_{P}(\mathcal{E})$ as the fiber of a completed stalk. Formation of stalks commutes with direct sums, tensor products and exterior powers \cite[7.4.1, 7.4.7 and 7.20.5]{MR4225278}. Completion commutes with direct sums \cite[7.15]{Eisenbud:95}; interpreting it as flat base change, \cite[7.2]{Eisenbud:95}, it also does so with tensor products,  \cite[Appendix A, Formula 11]{MR1011461}, and exterior powers, \cite[Appendix C, (4)]{MR1011461}. The exact same arguments apply to taking fibers, as it amounts to tensoring free modules with residue fields.
\end{proof}
It follows directly from part (1) of Lemma \ref{lem:localprops} that equivariant Euler characteristics are additive with respect to short exact sequences. While their behavior under general tensor products and exterior powers is governed by more complex combinatorial rules, the specific case of tensoring with trivial bundles is tractable and particularly relevant to the present analysis.
\begin{corollary}\label{eq:prodchar}
Let $\mathcal{E}, \mathcal{E}',\mathcal{E}''$ be $G$-equivariant vector bundles.
\begin{enumerate}
\item If $0\rightarrow \mathcal{E}'\rightarrow \mathcal{E}\rightarrow\mathcal{E}''\rightarrow 0$ is a short exact sequence in ${\rm QCoh}_G(X)$, then 
$
\chi_G(\mathcal{E})= \chi_G(\mathcal{E}')+\chi_G(\mathcal{E}'')
$.\\
\item If $W\in kG$-mod, then 
\begin{multline*}
\chi_G(W\otimes\mathcal{E})=\dim_k(W)  \left( 
\frac{1}{|G|} \deg(\mathcal{E})+(1-g_Y) \mathrm{rk}(\mathcal{E})
  \right)[kG]
\\
  - 
  \sum_{I \in \mathrm{Irr}(G)} 
  \sum_{Q\in Y}
  \sum_{d=1}^{e_Q}
\sum_{k=0}^{e_Q-1}
\left(
1 -\frac{d}{e_Q}
\right)\langle \chi_P^{d-k}, W\rangle \; n_{k,Q,I}(\mathcal{E}) [I].
\end{multline*}
\end{enumerate}

\end{corollary}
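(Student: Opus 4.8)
The plan is to deduce both parts from the Ellingsrud--L\o nsted formula of Theorem \ref{th:EL}, using Lemma \ref{lem:localprops} to control how the ingredients of that formula transform.

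For part (1), I would argue that each of the three quantities entering the right-hand side of Theorem \ref{th:EL} is additive along the short exact sequence $0\to\mathcal{E}'\to\mathcal{E}\to\mathcal{E}''\to 0$. The degree and rank of a vector bundle are additive on short exact sequences, and Lemma \ref{lem:localprops}(1) gives $V_P(\mathcal{E})\cong V_P(\mathcal{E}')\oplus V_P(\mathcal{E}'')$, whence $n_{d,Q,I}(\mathcal{E})=n_{d,Q,I}(\mathcal{E}')+n_{d,Q,I}(\mathcal{E}'')$ for every $d,Q,I$. Since the expression in Theorem \ref{th:EL} depends linearly on these data, additivity of $\chi_G$ follows termwise. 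Equivalently, and more conceptually, one may take the long exact cohomology sequence of $kG$-modules associated to the short exact sequence, which terminates at $H^1$ because $X$ is a curve, and use that the alternating sum of the classes of an exact sequence vanishes in ${\rm Rep}_k(G)$.

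For part (2), I would first note that $W\otimes\mathcal{E}$ is again a $G$-equivariant vector bundle by Proposition \ref{prop:exGfunctors}(4), and that after forgetting the $G$-structure $W\otimes\mathcal{E}\cong\mathcal{E}^{\oplus\dim_k W}$, so that $\deg(W\otimes\mathcal{E})=\dim_k(W)\deg(\mathcal{E})$ and $\mathrm{rk}(W\otimes\mathcal{E})=\dim_k(W)\mathrm{rk}(\mathcal{E})$. Substituting these into the leading term of Theorem \ref{th:EL} produces the factor $\dim_k(W)$ in front of $[kG]$. What remains is to compute the local multiplicities of $W\otimes\mathcal{E}$. Restricting to the decomposition group $G_P$ and applying Lemma \ref{lem:localprops}(3) identifies $V_P(W\otimes\mathcal{E})\cong {\rm Res}^G_{G_P}(W)\otimes V_P(\mathcal{E})$, so that
\[
n_{d,Q,I}(W\otimes\mathcal{E})=\big\langle \chi_P^d,\, {\rm Res}^G_{G_P}(W)\otimes V_P(\mathcal{E})\otimes {\rm Res}^G_{G_P}(I)\big\rangle.
\]

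The heart of the argument, and the step I expect to demand the most care, is the resulting character computation over the cyclic group $G_P$. Its irreducible characters are the powers $\chi_P^0,\ldots,\chi_P^{e_Q-1}$ of the fundamental character, so that products of characters add exponents modulo $e_Q$; in particular, for the one-dimensional $\chi_P^k$ and any $G_P$-module $M$ one has $\langle\chi_P^d,\chi_P^k\otimes M\rangle=\langle\chi_P^{d-k},M\rangle$. Decomposing ${\rm Res}^G_{G_P}(W)=\bigoplus_{k}\langle\chi_P^k,W\rangle\,\chi_P^k$ and taking $M=V_P(\mathcal{E})\otimes {\rm Res}^G_{G_P}(I)$ yields the convolution
\[
n_{d,Q,I}(W\otimes\mathcal{E})=\sum_{k=0}^{e_Q-1}\langle\chi_P^k,W\rangle\,n_{d-k,Q,I}(\mathcal{E}),
\]
where the subscript $d-k$ is read modulo $e_Q$. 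Reindexing $k\mapsto d-k$, which is harmless since both the character exponents and the local multiplicities are periodic of period $e_Q$, turns this into $\sum_{k=0}^{e_Q-1}\langle\chi_P^{d-k},W\rangle\,n_{k,Q,I}(\mathcal{E})$; inserting this into the ramification sum of Theorem \ref{th:EL} gives exactly the stated formula. The genuinely delicate points are keeping the restrictions to $G_P$ straight and justifying the periodicity that underlies the reindexing.
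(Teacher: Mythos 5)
Your proposal is correct and follows essentially the same route as the paper: part (1) via additivity of degree, rank, and (by Lemma \ref{lem:localprops}(1)) the local multiplicities $n_{d,Q,I}$ in the formula of Theorem \ref{th:EL}, and part (2) via the scaling of degree and rank by $\dim_k W$ together with the convolution identity $n_{d,Q,I}(W\otimes\mathcal{E})=\sum_{k}\langle\chi_P^{d-k},W\rangle\,n_{k,Q,I}(\mathcal{E})$ obtained from Lemma \ref{lem:localprops}(2)--(3). The only difference is that you spell out the cyclic-group character computation and the mod-$e_Q$ reindexing that the paper states in one line, which is a welcome amount of extra care rather than a deviation.
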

\begin{proof}
(1) follows immediately by part(1) of Lemma \ref{lem:localprops}. For (2), since $\deg(W\otimes\mathcal{E})=\dim_k(W)\deg(\mathcal{E})$ and ${\rm rk}(W\otimes\mathcal{E})=\dim_k(W){\rm rk}(\mathcal{E})$, the coefficient of $k[G]$ in $\chi_G(W\otimes\mathcal{E})$ is $\dim_k(W)$ times the coefficient of $k[G]$ in $\chi_G(\mathcal{E})$. Regarding the triple sum, parts (2) and (3) of Lemma \ref{lem:localprops} imply that
\[
n_{d,Q,I}(W\otimes\mathcal{E})=
\langle \chi_P^d,V_P(W\otimes\mathcal{E})\otimes I\rangle
=
\langle \chi_P^d,W\otimes V_P(\mathcal{E})\otimes I\rangle
=
\sum_{k=0}^{e_Q-1}\langle \chi_P^{d-k}, W\rangle \; n_{k,Q,I}(\mathcal{E}).
\]
\end{proof}
Ranks, degrees and genera are, more often than not, straightforward to compute; thus, the main challenge to obtaining explicit results from Theorem \ref{th:EL} is to determine its local ingredients. The best that one can hope for is to express each $n_{d,Q,I}(\mathcal{E})$ in terms of $n_{d,Q,I}=n_{d,Q,I}(\mathcal{O}_X)$.
 For $\mathcal{E}=\Omega_X^{\otimes m},\;m\in\mathbb{Z}_{\geq 0}$, one has the following formula, which is 
implicit in \cite[3.9]{MR589254}; for clarity of the exposition, we include a short proof.
\begin{lemma}\label{lem:nomega}
For all $Q\in Y,\;1\leq d\leq e_Q,\; I\in{\rm Irr}(G)$ and $m\in\mathbb{N}$,
\[
n_{d,Q,I}\big(\Omega_X^{\otimes m}\big)=n_{e_Q\left\{\frac{m-d}{e_Q}\right\},Q,I^\vee},
\]
where $\{ x\}=x-\lfloor x \rfloor$ denotes the fractional part of $x\in\mathbb{Z}$ and $I^\vee$ is the dual of $I$.
\end{lemma}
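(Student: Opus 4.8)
The plan is to reduce the claim to a single computation of the $kG_P$-module $V_P(\Omega_X)$, propagate the answer through the tensor power using Lemma \ref{lem:localprops}, and then finish with elementary character arithmetic over the cyclic group $G_P$.

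First I would identify $V_P(\Omega_X)$ as a $kG_P$-module. By the Remark following Definition \ref{def:localmult}, $V_P(\Omega_X)$ is the fiber of the completed stalk of $\Omega_X$ at $P$, that is, $\Omega_X\otimes_{\mathcal{O}_X}k(P)$. The universal derivation $d$ induces the canonical $k$-isomorphism $\Omega_X\otimes_{\mathcal{O}_X}k(P)\overset{\sim}{\longrightarrow}\mathfrak{m}_P/\mathfrak{m}_P^2$ sending the class of $df$ to the class of $f-f(P)$; since the linearization $a_g$ of $\Omega_X$ was built precisely from $d$ in the construction of $\Omega_X\in{\rm QCoh}_G(X)$ above, this isomorphism is $G_P$-equivariant. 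Hence $V_P(\Omega_X)\cong\mathfrak{m}_P/\mathfrak{m}_P^2$ as $kG_P$-modules, and the latter carries the character $\chi_P$ by the very definition of the fundamental character. Matching the canonical linearization of $\Omega_X$ with the action defining $\chi_P$---in particular, obtaining $\chi_P$ and not $\chi_P^{-1}$---is the delicate point, and the one I would verify most carefully, since it fixes the sign of the exponent throughout.

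Next, writing $\psi={\rm Res}^G_{G_P}(\chi_I)$ for the restricted character, Lemma \ref{lem:localprops}(2) gives $V_P(\Omega_X^{\otimes m})\cong V_P(\Omega_X)^{\otimes m}$ as $kG_P$-modules, so its character is $\chi_P^m$. Substituting into Definition \ref{def:localmult} and using that $\chi_P$ takes values in roots of unity, I compute
\[
n_{d,Q,I}(\Omega_X^{\otimes m})=\langle \chi_P^d,\chi_P^m\cdot\psi\rangle=\langle \chi_P^{d-m},\psi\rangle=n_{(d-m)\bmod e_Q,\,Q,\,I},
\]
where the middle equality is the identity $\langle \chi_P^d,\chi_P^m\psi\rangle=\langle \chi_P^{d-m},\psi\rangle$ and the last reduces the exponent modulo $e_Q$ via $\chi_P^{e_Q}=1$.

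Finally I would trade $I$ for its dual. Restriction commutes with dualization, and dualizing a one-dimensional character inverts it, so if $\chi_P^i$ occurs in ${\rm Res}^G_{G_P}(I)$ with multiplicity $a_i$, then $\chi_P^{-i}$ occurs in ${\rm Res}^G_{G_P}(I^\vee)$ with the same multiplicity. Comparing multiplicities yields the duality relation $n_{j,Q,I}=n_{(-j)\bmod e_Q,\,Q,\,I^\vee}$ for every $j$, and applying it with $j=(d-m)\bmod e_Q$ gives
\[
n_{d,Q,I}(\Omega_X^{\otimes m})=n_{(m-d)\bmod e_Q,\,Q,\,I^\vee}=n_{e_Q\{\frac{m-d}{e_Q}\},\,Q,\,I^\vee},
\]
the last equality being the elementary identity $e_Q\{n/e_Q\}=n\bmod e_Q$ for $n\in\mathbb{Z}$. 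The discrepancy between the index range $0\le d\le e_Q-1$ of Definition \ref{def:localmult} and the range $1\le d\le e_Q$ of the statement is immaterial, since all indices are read modulo $e_Q$. Beyond the identification of $V_P(\Omega_X)$ in the second step, I expect no genuine obstacle: the remaining steps are bookkeeping with characters of the cyclic group $G_P$.
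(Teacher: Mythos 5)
Your proof is correct and takes essentially the same route as the paper's: both identify $V_P(\Omega_X)\cong\mathfrak{m}_P/\mathfrak{m}_P^2$ as $kG_P$-modules so that its character is $\chi_P$, pass to $\chi_P^m$ for the tensor power via Lemma \ref{lem:localprops}(2), invoke the duality relation $n_{d,Q,I}=n_{e_Q-d,Q,I^\vee}$ (your $n_{j,Q,I}=n_{(-j)\bmod e_Q,Q,I^\vee}$), and finish by reducing exponents modulo $e_Q$. The only difference is presentational---you manipulate inner products where the paper expands and matches coefficients of $\chi_P$-powers---and your extra care about why the identification yields $\chi_P$ rather than $\chi_P^{-1}$ is a point the paper asserts without comment.
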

\begin{proof}
If $P\in \pi^{-1}(Q)$, then $V_P(\Omega_X)\cong \mathfrak{m}_P/\mathfrak{m}_P^2$ as $kG_P$-modules, and so the character of $V_P(\Omega_X)$ is the fundamental character $\chi_P$. Observing that $n_{d,Q,I}=n_{e_Q-d,Q,I^{\vee}}$, one writes
\[
V_P\big(\Omega_X^{\otimes m}\big )\otimes {\rm Res}^G_{G_P}(I)=
\sum_{d=1}^{e_Q}n_{d,Q,I}\chi_P^{d+m}=
\sum_{d=1}^{e_Q}n_{e_Q-d,Q,I^\vee}\chi_P^{d+m}=
\sum_{d=0}^{e_Q-1}n_{d,Q,I^\vee}\chi_P^{e_Q-d+m}=
\sum_{d=0}^{e_Q-1}n_{d,Q,I^\vee}\chi_P^{m-d},
\]
and the result follows as the remainder of the division of $m-d$ by $e_Q$ is exactly $e_Q\left\{\frac{m-d}{e_Q}\right\}$. It should be noted that the latter takes values between $0$ and $e_Q-1$, even though $1\leq d\leq e_Q$.
\end{proof}

\vspace{3mm}

%\begin{lemma}\label{lem:nomega}
%For $Q\in Y$, $0\leq d\leq e_Q-1$ and $m\in\mathbb{Z}_{\geq 1}$, let $d'= 1+e_Q\left\langle \frac{m-d-1}{e_Q}\right\rangle$, where $\langle \cdot \rangle$ denotes the fractional part of an integer. lf $I^\vee$ denotes the dual of $I\in{\rm Irr}(G)$, then
%\[
%\left(1-\frac{d'}{e_Q}\right)n_{d',Q,I}\big(\Omega_X^{\otimes m}\big)=\left(
%1-\frac{1}{e_Q}-\left\langle\frac{m-d-1}{e_Q}\right\rangle
%\right)
%n_{d,Q,I^{\vee}}
%\]
%\end{lemma}
%\begin{proof}
%By the discussion preceeding  \cite[Theorem 3.8]{MR589254}, one has that $n_{d',Q,I}\big(\Omega_X^{\otimes m}\big)=n_{d'',Q,I}=n_{d,Q,I^\vee}$, where $d''=e_Q-d$ and $d'=1+e_Q\left\langle \frac{d''+m-1}{e_Q}\right\rangle.$
%Eliminating $d''$ allows us to rewrite $n_{d',Q,I}\big(\Omega_X^{\otimes m}\big)=n_{d,Q,I^\vee}$ with
%\[
%d'=1+e_Q\left\langle \frac{e_Q-d+m-1}{e_Q}\right\rangle=
%1+e_Q\left\langle \frac{m-d-1}{e_Q}\right\rangle,
%\]
%and thus $\left(1-\frac{d'}{e_Q}\right)n_{d',Q,I}\big(\Omega_X^{\otimes m}\big)=\left(
%1-\frac{1}{e_Q}-\left\langle\frac{m-d-1}{e_Q}\right\rangle
%\right)
%n_{d,Q,I^{\vee}}$.
%\end{proof}

\subsection{Decomposition of the kernel bundle term}
We proceed with applying the results of the previous section to determine the $kG$-structure of $[H^0\big(\bigwedge^{p}\mathcal{M}_{\Omega_X}\otimes \Omega_X^{\otimes 2}\big)]$. First we show that this can be obtained directly from Theorem (\ref{th:EL}).
\begin{proposition}\label{prop:vanish}
$
H^1\big(\bigwedge^p\mathcal{M}_{\Omega_X}\otimes \Omega_X^{\otimes 2}\big)=0
$ if $p\neq g-1$, and $
H^1\big(\bigwedge^{g-1}\mathcal{M}_{\Omega_X}\otimes \Omega_X^{\otimes 2}\big)=k
$.
\end{proposition}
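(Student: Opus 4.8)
The plan is to transfer the computation of this first cohomology group to that of a space of global sections by means of Serre duality, and then to collapse the resulting twists using the determinant of the Lazarsfeld bundle together with the elementary vanishing $H^0(\bigwedge^j\mathcal{M}_{\Omega_X})=0$ for $j\geq 1$, which is already invoked in the proof of Corollary \ref{cor:bettivanish}. Since the statement concerns the cohomology group itself and not its $kG$-structure, ordinary Serre duality on the curve $X$ suffices.

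First I would record the numerical data of $\mathcal{M}_{\Omega_X}$. As $\dim_k V=g$, the defining short exact sequence $0\to\mathcal{M}_{\Omega_X}\to V\otimes\mathcal{O}_X\to\Omega_X\to 0$ shows that $\mathcal{M}_{\Omega_X}$ has rank $g-1$, and taking top exterior powers yields $\det(\mathcal{M}_{\Omega_X})\otimes\Omega_X\cong\det(V\otimes\mathcal{O}_X)\cong\mathcal{O}_X$, so that $\det\mathcal{M}_{\Omega_X}\cong\Omega_X^{-1}$. The perfect pairing $\bigwedge^p\mathcal{E}\otimes\bigwedge^{r-p}\mathcal{E}\to\det\mathcal{E}$ for a rank-$r$ bundle then supplies the identification $(\bigwedge^p\mathcal{M}_{\Omega_X})^\vee\cong\bigwedge^{g-1-p}\mathcal{M}_{\Omega_X}\otimes(\det\mathcal{M}_{\Omega_X})^{-1}\cong\bigwedge^{g-1-p}\mathcal{M}_{\Omega_X}\otimes\Omega_X$.

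Next comes the duality step. Applying Serre duality in the form $H^1(\mathcal{F})\cong H^0(\mathcal{F}^\vee\otimes\Omega_X)^\vee$ to $\mathcal{F}=\bigwedge^p\mathcal{M}_{\Omega_X}\otimes\Omega_X^{\otimes 2}$ and substituting the identification above, the canonical twists cancel and I obtain $H^1\big(\bigwedge^p\mathcal{M}_{\Omega_X}\otimes\Omega_X^{\otimes 2}\big)\cong H^0\big(\bigwedge^{g-1-p}\mathcal{M}_{\Omega_X}\big)^\vee$. The conclusion is then read off by cases: for $0\leq p\leq g-2$ the index $g-1-p$ lies in $\{1,\dots,g-1\}$ and the cited vanishing forces $H^1=0$; for $p>g-1$ the bundle $\bigwedge^p\mathcal{M}_{\Omega_X}$ is already zero since $\mathcal{M}_{\Omega_X}$ has rank $g-1$; and for $p=g-1$ the index is $0$, so $H^0(\bigwedge^0\mathcal{M}_{\Omega_X})=H^0(\mathcal{O}_X)=k$ yields $H^1=k$.

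The manipulations here are essentially formal once the isomorphisms are in place; the only point requiring care is the bookkeeping of the determinant twist in the wedge-power duality and the verification that the elementary vanishing $H^0(\bigwedge^j\mathcal{M}_{\Omega_X})=0$ is available across the full range $1\leq j\leq g-1$ rather than merely for small $j$. I expect this last point, rather than the duality computation, to carry the real content of the argument.
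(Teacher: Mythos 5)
Your proof is correct, but it follows a genuinely different route from the paper's. The paper twists the defining sequence $0\to\mathcal{M}_{\Omega_X}\to V\otimes\mathcal{O}_X\to\Omega_X\to 0$ by $\Omega_X^{\otimes 2}$, takes the long exact cohomology sequence (using $H^1(\Omega_X^{\otimes 2})=0$), and identifies $\dim_k H^1\big(\bigwedge^p\mathcal{M}_{\Omega_X}\otimes\Omega_X^{\otimes 2}\big)$ with $\dim_k K_{p-1,3}(X,\Omega_X)$ via Theorem \ref{th:main}; the conclusion then follows from the known vanishing of Koszul cohomology of canonical curves \cite[3.6]{MR3729076} recalled before Corollary \ref{cor:bettivanish}. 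You instead combine Serre duality with the perfect pairing $\bigwedge^p\mathcal{M}_{\Omega_X}\otimes\bigwedge^{g-1-p}\mathcal{M}_{\Omega_X}\to\det\mathcal{M}_{\Omega_X}\cong\Omega_X^{-1}$, reducing everything to the groups $H^0\big(\bigwedge^{j}\mathcal{M}_{\Omega_X}\big)$; all the intermediate identifications (rank, determinant, cancellation of twists) check out. The concern you flag at the end is unfounded: for any globally generated line bundle $L$, $H^0\big(\bigwedge^{j}\mathcal{M}_L\big)\cong\ker d_{j,0}$ is the kernel of the comultiplication $\bigwedge^j V\to\bigwedge^{j-1}V\otimes V$, which is injective for every $j\geq 1$ (it is dual to the surjective wedge product map), so the vanishing holds across the full range $1\leq j\leq g-1$; this is precisely the content of the reference invoked in Corollary \ref{cor:bettivanish}. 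As for what each approach buys: yours is more self-contained, since it avoids importing the Koszul vanishing statement — whose standard proof, via Green's duality theorem, is essentially the same Serre-duality computation you perform directly — whereas the paper's argument is shorter given the machinery already established (Theorem \ref{th:main} and Proposition \ref{prop:contrprop}) and makes explicit that this $H^1$ \emph{is} the Koszul group $K_{p-1,3}(X,\Omega_X)$, which fits the thematic thread of the section.
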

\begin{proof}
As in the proof of Theorem \ref{th:main}, the short exact sequence $0\rightarrow\mathcal{M}_{\Omega_X}\rightarrow H^0(\Omega_X)\otimes\mathcal{O}_X\rightarrow \Omega_X\rightarrow 0$ gives rise, for any $p\in\mathbb{Z}_{\geq 0}$, to a short exact sequence
\[
0\rightarrow 
\bigwedge^{p}\mathcal{M}_{\Omega_X}\otimes {\Omega_X}^{\otimes 2}
\rightarrow
\bigwedge^{p}V\otimes {\Omega_X}^{\otimes 2}
\rightarrow
\bigwedge^{p-1}\mathcal{M}_{\Omega_X}\otimes {\Omega_X}^{\otimes 3}
\rightarrow 0.
\]
Noting that $H^1(\Omega^{\otimes 2})=0$, the induced long exact sequence in cohomology in ${\rm Vect}_k$ takes the form
\[
0\rightarrow 
H^0\big(\bigwedge^{p}\mathcal{M}_{\Omega_X}^{\otimes 2}\otimes {\Omega_X}\big)
\rightarrow
\bigwedge^{p}V\otimes H^0\big({\Omega_X}^{\otimes 2}\big)
\rightarrow
H^0\big(\bigwedge^{p-1}\mathcal{M}_{\Omega_X}\otimes {\Omega_X}^{\otimes 3}\big)
\rightarrow
H^1\big(\bigwedge^{p}\mathcal{M}_{\Omega_X}\otimes {\Omega_X}^{\otimes 2}\big)
\rightarrow 0.
\]
Theorem \ref{th:main} implies that $\dim_k H^1\big(\bigwedge^{p}\mathcal{M}_{\Omega_X}\otimes {\Omega_X}^{\otimes 2}\big)=\dim_k K_{p-1,3}(X,\Omega_X)$ and the result follows, see the discussion preceding Corollary \ref{cor:bettivanish}. 
\end{proof}
Next, we compute the rank and the degree of $\bigwedge^{p}\mathcal{M}_{\Omega_X}\otimes {\Omega_X}^{\otimes 2}$.
\begin{proposition}\label{prop:rkdeg}
\[
{\rm rk}\big(\bigwedge^p\mathcal{M}_{\Omega_X}\otimes \Omega_X^{\otimes 2}\big)
=
\binom{g-1}{p}
\text{ and }
\deg( \bigwedge^p M_{\Omega_X} \otimes \Omega_X^q)=
2(2g-2-p)\binom{g-1}{p}.
\]
\end{proposition}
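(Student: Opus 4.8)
The plan is to reduce both quantities to numerical invariants of the kernel bundle $\mathcal{M}_{\Omega_X}$, compute those from the defining short exact sequence, and then apply two standard vector-bundle formulas followed by a single binomial simplification. Throughout I take $q=2$, as in the surrounding discussion, so $\Omega_X^q=\Omega_X^{\otimes 2}$ has degree $2(2g-2)=4g-4$.

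First I would read off the rank and degree of $\mathcal{M}_{\Omega_X}$ from the short exact sequence
\[
0\rightarrow \mathcal{M}_{\Omega_X}\rightarrow V\otimes\mathcal{O}_X\overset{{\rm ev}}{\rightarrow} \Omega_X\rightarrow 0.
\]
Since $\dim_k V=h^0(\Omega_X)=g$, the bundle $V\otimes\mathcal{O}_X$ has rank $g$ and degree $0$; additivity of rank and degree across the sequence then gives ${\rm rk}(\mathcal{M}_{\Omega_X})=g-1$ and $\deg(\mathcal{M}_{\Omega_X})=-\deg(\Omega_X)=2-2g$.

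Next I would invoke the standard identities for a vector bundle $\mathcal{E}$ of rank $r$ on a curve: ${\rm rk}\big(\bigwedge^p\mathcal{E}\big)=\binom{r}{p}$ and, from the splitting-principle computation $c_1\big(\bigwedge^p\mathcal{E}\big)=\binom{r-1}{p-1}c_1(\mathcal{E})$, the degree formula $\deg\big(\bigwedge^p\mathcal{E}\big)=\binom{r-1}{p-1}\deg\mathcal{E}$. Applying these with $r=g-1$ and $\deg\mathcal{M}_{\Omega_X}=2-2g$ yields ${\rm rk}\big(\bigwedge^p\mathcal{M}_{\Omega_X}\big)=\binom{g-1}{p}$ and $\deg\big(\bigwedge^p\mathcal{M}_{\Omega_X}\big)=\binom{g-2}{p-1}(2-2g)$. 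Tensoring by the line bundle $\Omega_X^{\otimes 2}$ preserves rank, which immediately gives the first claim ${\rm rk}\big(\bigwedge^p\mathcal{M}_{\Omega_X}\otimes\Omega_X^{\otimes 2}\big)=\binom{g-1}{p}$, and shifts the degree by ${\rm rk}\cdot\deg(\Omega_X^{\otimes 2})$, so
\[
\deg\big(\textstyle\bigwedge^p\mathcal{M}_{\Omega_X}\otimes\Omega_X^{\otimes 2}\big)
=\binom{g-2}{p-1}(2-2g)+\binom{g-1}{p}(4g-4).
\]

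Finally I would simplify using the identity $\binom{g-2}{p-1}=\frac{p}{g-1}\binom{g-1}{p}$. Factoring $2(g-1)\binom{g-1}{p}$ out of the right-hand side collapses the bracket to $2-\frac{p}{g-1}=\frac{2(g-1)-p}{g-1}$, giving exactly $2(2g-2-p)\binom{g-1}{p}$. No step is genuinely difficult; the only points requiring care are citing the correct exterior-power degree formula $\deg\big(\bigwedge^p\mathcal{E}\big)=\binom{r-1}{p-1}\deg\mathcal{E}$ (a consequence of the splitting principle) and keeping the two-term binomial bookkeeping honest.
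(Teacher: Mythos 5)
Your proof is correct and follows essentially the same route as the paper's: additivity of rank and degree across the evaluation sequence to get ${\rm rk}(\mathcal{M}_{\Omega_X})=g-1$ and $\deg(\mathcal{M}_{\Omega_X})=2-2g$, then the standard exterior-power formulas ${\rm rk}\big(\bigwedge^p\mathcal{E}\big)=\binom{r}{p}$ and $\deg\big(\bigwedge^p\mathcal{E}\big)=\binom{r-1}{p-1}\deg\mathcal{E}$, the rank-times-degree rule for tensoring with the line bundle $\Omega_X^{\otimes 2}$, and the binomial identity $\binom{g-2}{p-1}=\frac{p}{g-1}\binom{g-1}{p}$ to reach $2(2g-2-p)\binom{g-1}{p}$. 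No gaps; the steps match the paper's proof essentially line for line.
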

\begin{proof}
Both ${\rm rk}$ and ${\rm deg}$ are additive in short exact sequences. As ${\rm rk}\big(\Omega_X\big)=1$ and ${\rm deg}\big(\Omega_X\big)=2g-2$, $0\rightarrow \mathcal{M}_{\Omega_X}\rightarrow H^0(\Omega_X)\otimes\mathcal{O}_X\rightarrow {\Omega}_X\rightarrow 0$ gives ${\rm rk}\big(\mathcal{M}_{\Omega_X}\big)=g-1\text{ and }{\rm deg}\big(\mathcal{M}_{\Omega_X}\big)=-2g+2$. Thus
\[
{\rm rk}\big(\bigwedge^p\mathcal{M}_{\Omega_X}\big)=\binom{{\rm rk}(\mathcal{M}_{\Omega_X})}{p}
=\binom{g-1}{p}
,\;
{\rm deg}\big(\bigwedge^p\mathcal{M}_{\Omega_X}\big)
=
\binom{{\rm rk}(\mathcal{M}_{\Omega_X})-1}{p-1}{\rm deg}\big(\mathcal{M}_{\Omega_X}\big)
=
\binom{g-2}{p-1}(2-2g).
\]
Finally,
\begin{align*}
{\rm rk}\big(\bigwedge^p\mathcal{M}_{\Omega_X}\otimes \Omega_X^{\otimes 2}\big)
&=
{\rm rk}\big(\bigwedge^p\mathcal{M}_{\Omega_X}\big)\cdot{\rm rk}\big(\Omega_X^{\otimes 2}\big)=\binom{g-1}{p}\text{ and }\\
{\rm deg}\big(\bigwedge^p\mathcal{M}_{\Omega_X}\otimes \Omega_X^{\otimes 2}\big)
&=
{\rm rk}\big(\bigwedge^p\mathcal{M}_{\Omega_X}\big)\cdot{\rm deg}\big(\Omega_X^{\otimes 2}\big)
+
{\rm deg}\big(\bigwedge^p\mathcal{M}_{\Omega_X}\big)\cdot{\rm rk}\big(\Omega_X^{\otimes 2}\big)
\\
&=
(4g-4)\binom{g-1}{p}-(2g-2)\binom{g-2}{p-1}=
4(g-1)\binom{g-1}{p}-2p\binom{g-1}{p}.
\end{align*}
\end{proof}

It remains to compute the $n_{d,Q,I}(\bigwedge^p\mathcal{M}_{\Omega_X}\otimes \Omega_X^{\otimes 2})$; by Lemma \ref{lem:nomega}, it suffices to express them in terms of the $n_{d,Q,I}(\Omega_X^{\otimes m})$ for some $m\in\mathbb{Z}_{\geq 0}$. With this objective in mind, recall from \cite[23.39]{MR1153249} that the representation ring of $G_P$ over $k$ is endowed with a $\lambda$-ring structure, with $\lambda^n(W)=\bigwedge^nW$. If $\lambda_t(V)=\sum_Fn \bigwedge^n V t^n$ denotes the respective generating function and $0\rightarrow W_1\rightarrow W_2\rightarrow W_3\rightarrow 0$ is a short exact sequence of $kG_P$-modules then by \cite[12.5]{MR1038525} one has that
\[
\lambda_t(W_2)=\lambda_t({W_1})\lambda_t(W_3)\text{ and }
\left(\lambda_t(W_3)\right)^{-1}=\sigma_{-t}(W_3):=\sum_n(-1)^n{\rm Sym}^n(W_3)t^n.
\]
This has two implications that are of interest to this paper. First, that expanding $\lambda_{t}(W_1)=\lambda_t(W_2)\sigma_{-t}(W_3)$ and equating coefficients yields the following identity of virtual representations
\begin{equation}\label{eq:binomial1}
\bigwedge^p [W_1]= \sum_{i=0}^p (-1)^{i}\bigwedge^{p-i}[W_2]\cdot [{\rm Sym}^i(W_3)].
\end{equation}
Second, that for non-negative integers $a_0,\ldots,a_{e_Q-1}$,
\[
\lambda_t\left(\sum_{d=0}^{e_Q-1}a_d\chi_P^d\right)
=\prod_{d=0}^{e_Q-1}(1+\chi_P^d t)^{a_d}
=\sum_{0\leq k_0,\ldots k_{e_Q-1}\leq a_d}
\left(\prod_{d=0}^{e_Q-1} \binom{a_d}{k_d}\right) \chi^{\sum d\cdot k_d} t^{\sum k_d},
\]
and thus, extracting the coefficient of $t^r$,
\begin{equation}\label{eq:multinomial}
\bigwedge^{r}\left(\sum_{d=0}^{e_Q-1}a_d\chi_P^d\right)
=\sum_{\substack{k_0+\cdots+k_{e_Q-1}=r \\0\leq k_d\leq a_d}}
\left(\prod_{d=0}^{e_Q-1} \binom{a_d}{k_d}\right) \chi^{\sum d\cdot k_d}.
\end{equation}

\begin{proposition}\label{prop:binomial3}
If $Q\in Y,\;0\leq d\leq e_Q-1,\;I\in{\rm Irr}(G)$ and $p\in\mathbb{Z}_{\geq 0}$, then
\[
n_{d,Q,I}\big(\bigwedge^p\mathcal{M}_{\Omega_X}\otimes \Omega_X^{\otimes 2}\big)
=\sum_{i=0}^p
\sum_{k=0}^{e_Q-1}
(-1)^i
\langle
\chi_P^{d-k},
\bigwedge^{p-i}H^0(\Omega_X)
\rangle
\;
n_{k,Q,I}(\Omega_X^{\otimes (i+2)}).
\]
\end{proposition}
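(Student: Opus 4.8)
The plan is to reduce the statement to a character computation on the completed fiber over a point $P\in\pi^{-1}(Q)$ and then to expand the relevant exterior power fiberwise by means of the $\lambda$-ring identity \eqref{eq:binomial1}. Concretely, I would unwind Definition \ref{def:localmult} and apply Lemma \ref{lem:localprops}(2) to factor the completed fiber, writing
\[
n_{d,Q,I}\big(\bigwedge^p\mathcal{M}_{\Omega_X}\otimes \Omega_X^{\otimes 2}\big)
=
\big\langle \chi_P^d,\; \bigwedge^p V_P(\mathcal{M}_{\Omega_X})\otimes V_P(\Omega_X^{\otimes 2})\otimes {\rm Res}^G_{G_P}(I)\big\rangle.
\]
The core of the argument is then the computation of $\bigwedge^p V_P(\mathcal{M}_{\Omega_X})$ as a virtual $kG_P$-character. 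To this end I would apply the functor $V_P(-)$ to the defining short exact sequence $0\to\mathcal{M}_{\Omega_X}\to V\otimes\mathcal{O}_X\to\Omega_X\to 0$. By Lemma \ref{lem:localprops}(1) this produces a short exact sequence of $kG_P$-modules; Lemma \ref{lem:localprops}(3) together with $V_P(\mathcal{O}_X)=k$ identifies its middle term with ${\rm Res}^G_{G_P}(V)$, while the proof of Lemma \ref{lem:nomega} identifies $V_P(\Omega_X)$ with the one-dimensional fundamental character $\chi_P$.

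Feeding $W_1=V_P(\mathcal{M}_{\Omega_X})$, $W_2={\rm Res}^G_{G_P}(V)$ and $W_3=V_P(\Omega_X)$ into \eqref{eq:binomial1}, and using that ${\rm Sym}^i$ of the one-dimensional $V_P(\Omega_X)$ has character $\chi_P^i$, gives
\[
\bigwedge^p V_P(\mathcal{M}_{\Omega_X})=\sum_{i=0}^p(-1)^i \bigwedge^{p-i}{\rm Res}^G_{G_P}(V)\cdot \chi_P^i.
\]
Tensoring with $V_P(\Omega_X^{\otimes 2})$, whose character is $\chi_P^2$, converts each $\chi_P^i$ into $\chi_P^{i+2}$; the conceptually important move here is to re-read $\chi_P^{i+2}$ as the character of $V_P(\Omega_X^{\otimes(i+2)})$, since this is exactly what lets the final answer be phrased in terms of the quantities $n_{k,Q,I}(\Omega_X^{\otimes m})$ that Lemma \ref{lem:nomega} can evaluate.

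Substituting back and using linearity of the inner product leaves a double sum of terms $\big\langle \chi_P^d,\; \bigwedge^{p-i}{\rm Res}^G_{G_P}(V)\otimes V_P(\Omega_X^{\otimes(i+2)})\otimes {\rm Res}^G_{G_P}(I)\big\rangle$. Since $G_P$ is cyclic, every irreducible character is a power $\chi_P^k$ with $0\le k\le e_Q-1$, and the convolution rule $\langle\chi_P^d, A\otimes B\rangle=\sum_{k=0}^{e_Q-1}\langle\chi_P^k,A\rangle\langle\chi_P^{d-k},B\rangle$ splits each summand as $\langle \chi_P^k,\bigwedge^{p-i}V\rangle\cdot n_{d-k,Q,I}(\Omega_X^{\otimes(i+2)})$. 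A final reindexing $k\mapsto d-k$ modulo $e_Q$, which is legitimate because both factors depend on their indices only through powers of $\chi_P$ and hence only modulo $e_Q$, rewrites the sum in the claimed form.

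I expect the main obstacle to lie not in any single deep step but in the careful fiberwise bookkeeping: verifying that $V_P(-)$ genuinely returns a short exact sequence of $kG_P$-modules with the stated identifications of its three terms, and tracking the $\chi_P$-twists so that the powers of $\Omega_X$ recombine precisely as $\Omega_X^{\otimes(i+2)}$ before the cyclic convolution and the reindexing are applied.
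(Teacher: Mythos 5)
Your proposal is correct and follows essentially the same route as the paper's proof: apply Lemma \ref{lem:localprops}(1) to the defining short exact sequence of $\mathcal{M}_{\Omega_X}$ to get a short exact sequence of $kG_P$-modules, expand $\bigwedge^p V_P(\mathcal{M}_{\Omega_X})$ via eq.~(\ref{eq:binomial1}) using that $V_P(\Omega_X)$ is one-dimensional (so ${\rm Sym}^i$ becomes $\chi_P^i$), twist by $V_P(\Omega_X^{\otimes 2})$, and finish with the cyclic convolution of characters over $G_P$. Your write-up is in fact slightly more explicit than the paper's at two points --- identifying the middle fiber as ${\rm Res}^G_{G_P}(V)$ via Lemma \ref{lem:localprops}(3), and justifying the reindexing $k\mapsto d-k$ modulo $e_Q$ --- but these are details the paper leaves implicit, not a different argument.
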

\begin{proof}
Part (1) of Lemma \ref{lem:localprops} applied to the short exact sequence $0\rightarrow \mathcal{M}_{\Omega_X}\rightarrow H^0(\Omega_X)\otimes\mathcal{O}_X\rightarrow {\Omega}_X\rightarrow 0$ in ${\rm QCoh}_G(X)$ gives rise to the short exact sequence $0\rightarrow V_P(\mathcal{M}_{\Omega_X})\rightarrow H^0(\Omega_X)\rightarrow V_{P}(\Omega_X)\rightarrow 0$ of $kG_P$-modules. As $V_{P}(\Omega_X)$ is 1-dimensional,  ${\rm Sym}^i(V_{P}(\Omega_X))\cong V_{P}(\Omega_X)^{\otimes i}$ and so eq. (\ref{eq:binomial1}) gives
\[
\bigwedge^p [V_P(\mathcal{M}_{\Omega_X})]
=
\sum_{i=0}^p (-1)^i\bigwedge^{p-i}[H^0(\Omega_X)]\cdot[ V_P(\Omega_X)]^{i}.
\]
Combining this with part (2) of Lemma \ref{lem:localprops} yields
\[
\big[V_P\big(\bigwedge^p\mathcal{M}_{\Omega_X}\otimes \Omega_X^{\otimes 2}\big)\big]
=
\bigwedge^p [V_P(\mathcal{M}_{\Omega_X})]\cdot [V_P(\Omega_X)]^{ 2}
=
\sum_{i=0}^p (-1)^i \bigwedge^{p-i}[H^0(\Omega_X)]\cdot [ V_P(\Omega_X)]^{ i+2},
\]
and thus, as in the discussion preceding Lemma \ref{lem:nomega},
\begin{align*}
n_{d,Q,I}\big(\bigwedge^p\mathcal{M}_{\Omega_X}\otimes \Omega_X^{\otimes 2}\big)
&=
\sum_{i=0}^p(-1)^i
\langle\chi_P^d,\bigwedge^{p-i}H^0(\Omega_X)\otimes V_P(\Omega_X^{\otimes (i+2)})\otimes I
\rangle
\\
&=\sum_{i=0}^p
\sum_{k=0}^{e_Q-1}
(-1)^i
\langle
 \chi_P^{d-k},
\bigwedge^{p-i}H^0(\Omega_X)
\rangle
\;
\langle \chi_P^k,
 V_P(\Omega_X^{\otimes (i+2)})\otimes I
\rangle\\
&=\sum_{i=0}^p
\sum_{k=0}^{e_Q-1}
(-1)^i
\langle
\chi_P^{d-k},
\bigwedge^{p-i}H^0(\Omega_X)
\rangle
\;
n_{k,Q,I}(\Omega_X^{\otimes (i+2)}).
\end{align*}

\end{proof}

To give an explicit formula for the local multiplicities of $\bigwedge^{p-i}H^0(\Omega_X)$, we recall that by \cite[3.9]{MR589254} 
\begin{equation}\label{eq:ChW}
[H^0\big(\Omega_X\big)]=\sum_{I\in{\rm Irr}(G)} m_I[I],
\text{ where }
m_I=(g_Y-1)\dim I+\sum_{Q\in Y}\sum_{d=0}^{e_Q-1}\left\langle \frac{-d}{e_Q}\right\rangle n_{d,Q,I}+\delta_{I,I_{\rm triv}}
\end{equation}
where $\delta_{I,I_{\rm triv}}=1$ if $I=I_{\rm triv}$ and $0$ otherwise.
\begin{lemma}\label{lem:localwedge}
For $Q\in Y,\;0\leq d\leq e_Q-1,\;r\in\mathbb{Z}_{\geq 0}$ and $\{m_I:\;I\in {\rm Irr}(G)\}$ as in eq. (\ref{eq:ChW}),
\[
\langle 
\chi_P^{d}, \bigwedge^{r}H^0(\Omega_X)
\rangle
=
\sum_{\substack{ 0\leq k_j\leq \sum_{I}m_I n_{j,Q,I}  \\ \sum k_j=r \\ \sum jk_j\equiv d\mod e_Q}}
\prod_{j=0}^{e_Q-1}
\left(\!\begin{array}{c} \displaystyle\sum_{I\in{\rm Irr}(G)}m_I\cdot n_{j,Q,I} \\ k_j \end{array}\!\right).
\]
\end{lemma}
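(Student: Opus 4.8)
The plan is to push the entire computation down to the cyclic decomposition group $G_P$, where the $\lambda$-ring identity (\ref{eq:multinomial}) applies verbatim, and then to account carefully for the reduction of characters modulo $e_Q$. Since exterior powers are defined functorially on underlying vector spaces, they commute with restriction along $G_P\hookrightarrow G$; hence
\[
{\rm Res}^G_{G_P}\big(\bigwedge^r H^0(\Omega_X)\big)=\bigwedge^r\big({\rm Res}^G_{G_P}H^0(\Omega_X)\big),
\]
and the pairing $\langle\chi_P^d,\bigwedge^r H^0(\Omega_X)\rangle$ can be computed entirely inside ${\rm Rep}_k(G_P)$.

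First I would decompose the restricted module into the irreducible characters $\chi_P^j$ of the cyclic group $G_P$. Writing ${\rm Res}^G_{G_P}H^0(\Omega_X)=\sum_{j=0}^{e_Q-1}a_j\chi_P^j$, the coefficients are $a_j=\langle\chi_P^j,{\rm Res}^G_{G_P}H^0(\Omega_X)\rangle$. Combining the global decomposition $[H^0(\Omega_X)]=\sum_{I}m_I[I]$ from (\ref{eq:ChW}) with the definition $n_{j,Q,I}=\langle\chi_P^j,{\rm Res}^G_{G_P}(I)\rangle$ of Definition \ref{def:localmult}, restricting termwise and pairing with $\chi_P^j$ gives $a_j=\sum_{I\in{\rm Irr}(G)}m_I\,n_{j,Q,I}$.

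Next I would feed these values of $a_j$ into identity (\ref{eq:multinomial}), expanding $\bigwedge^r\big(\sum_j a_j\chi_P^j\big)$ as a sum indexed by tuples $(k_0,\ldots,k_{e_Q-1})$ with $\sum_j k_j=r$ and $0\le k_j\le a_j$, each contributing $\prod_j\binom{a_j}{k_j}$ copies of the character $\chi_P^{\sum_j j\,k_j}$.

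The main obstacle, and the only step that is not purely formal, is that (\ref{eq:multinomial}) records each summand as $\chi_P^{\sum_j j\,k_j}$ with an unreduced exponent, whereas $G_P$ is cyclic of order $e_Q$, so $\chi_P^{e_Q}=\chi_P^0$ and distinct exponents can name the same irreducible. To isolate the multiplicity of a fixed $\chi_P^d$ one must therefore collect precisely those tuples whose exponent agrees with $d$ modulo $e_Q$. This replaces the naive equality $\sum_j j\,k_j=d$ by the congruence $\sum_j j\,k_j\equiv d\pmod{e_Q}$ in the summation range; substituting $a_j=\sum_I m_I\,n_{j,Q,I}$ then yields the stated formula.
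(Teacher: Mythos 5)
Your proof is correct and takes essentially the same route as the paper: restrict to $G_P$, identify the coefficients $a_j=\sum_{I}m_I\,n_{j,Q,I}$ by combining eq. (\ref{eq:ChW}) with Definition \ref{def:localmult}, and apply eq. (\ref{eq:multinomial}), collecting exponents modulo $e_Q$. Your careful justification of the reduction step is precisely what the paper compresses into the phrase ``imposing the relation $\sum_j jk_j\equiv d \bmod e_Q$ \ldots to extract the coefficient of $\chi_P^d$.''
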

\begin{proof}
The restriction functor commutes with direct sums and exterior powers, and so
\begin{align*}
[{\rm Res}^G_{G_P}\big(\bigwedge^{r}H^0(\Omega_X)\big)]
&=
\bigwedge^{r}\sum_{I\in{\rm Irr}(G)} m_I{\rm Res}^G_{G_P}\big([I]\big)
=
\bigwedge^{r}\sum_{I\in{\rm Irr}(G)} m_I\sum_{d=0}^{e_Q-1}n_{d,Q,I}\chi_P^d\\
&=
\bigwedge^{r}\sum_{d=0}^{e_Q-1}\left(\sum_{I\in{\rm Irr}(G)} m_I\cdot n_{d,Q,I}\right)\chi_P^d.
\end{align*}
The result follows by imposing the relation $\sum_{j}j k_j\equiv d\;{\rm mod}\; e_Q$ to eq. (\ref{eq:multinomial}) to extract the coefficient of $\chi_P^d$.
\end{proof}

For the reader’s convenience, and since the formula below is the main result of the section, we recall within the statement the notation for all of its constituent components.

\begin{theorem}\label{th:mainwedge}
Let $g_X$ be the genus of $X$ and $g_Y$ the genus of $Y=X/G$. For each $Q\in Y$, let $e_Q$ be the ramification index of $P\in\pi^{-1}(Q)$, let $G_P$ be its decomposition group, $\chi_P$ be the fundamental character of $\mathfrak{m}_P/\mathfrak{m}_P^2$ and let $n_{k,Q,I}=\langle \chi_P^k,I\rangle$, for $I\in{\rm Irr}(G), \;0\leq k\leq e_Q-1$. Then
\begin{multline*}
[H^0\big(\bigwedge^p\mathcal{M}_{\Omega_X}\otimes\Omega_X^{\otimes 2}\big)]
=
\binom{g_X-1}{p} \left( \frac{2(g_X-1-p)}{|G|}
+
g_Y-1\right)
[kG]\\
  +
\sum_{I \in \operatorname{Irr}(G)} \sum_{Q \in Y} \sum_{i=0}^p (-1)^i \sum_{d=0}^{e_Q-1} 
\langle \chi_P^{d}, \bigwedge^{p-i}H^0(\Omega_X) \rangle 
\sum_{k=0}^{e_Q-1} \left\{ \frac{d+i+1-k}{e_Q} \right\} n_{k,Q,I} [I].
\end{multline*}
\end{theorem}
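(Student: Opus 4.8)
The plan is to reduce the computation of \([H^0(\bigwedge^p\mathcal{M}_{\Omega_X}\otimes\Omega_X^{\otimes 2})]\) to an equivariant Euler characteristic and then evaluate the latter with the Ellingsrud--L\o nsted formula. First I would invoke Proposition \ref{prop:vanish}: since \(H^1(\bigwedge^p\mathcal{M}_{\Omega_X}\otimes\Omega_X^{\otimes 2})=0\) for \(p\neq g_X-1\), one has \([H^0(\bigwedge^p\mathcal{M}_{\Omega_X}\otimes\Omega_X^{\otimes 2})]=\chi_G(\bigwedge^p\mathcal{M}_{\Omega_X}\otimes\Omega_X^{\otimes 2})\) throughout the relevant range, the single exceptional value \(p=g_X-1\) contributing an extra \([I_{\mathrm{triv}}]\) that I would record separately. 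The problem then becomes the explicit evaluation of the right-hand side of Theorem \ref{th:EL} for \(\mathcal{E}=\bigwedge^p\mathcal{M}_{\Omega_X}\otimes\Omega_X^{\otimes 2}\), whose every ingredient has already been assembled: the rank and degree are supplied by Proposition \ref{prop:rkdeg}, and the local multiplicities \(n_{d,Q,I}(\mathcal{E})\) by Proposition \ref{prop:binomial3}.

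For the coefficient of \([kG]\) I would substitute \(\mathrm{rk}(\mathcal{E})=\binom{g_X-1}{p}\) and \(\deg(\mathcal{E})=2(2g_X-2-p)\binom{g_X-1}{p}\) into \(\tfrac{1}{|G|}\deg(\mathcal{E})+(1-g_Y)\mathrm{rk}(\mathcal{E})\). This yields \(\binom{g_X-1}{p}\big(\tfrac{2(2g_X-2-p)}{|G|}+1-g_Y\big)\), which is \emph{not} literally the coefficient in the statement. The difference is exactly \(\binom{g_X-1}{p}\sum_{Q}(1-\tfrac{1}{e_Q})\), a Riemann--Hurwitz amount that vanishes in the unramified (in particular trivial-group) case, so the stated split between the \([kG]\)-term and the \([I]\)-sum is a deliberate reorganization rather than the bare output of Theorem \ref{th:EL}. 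I would make this precise by rewriting each Ellingsrud--L\o nsted weight \(1-\tfrac{d}{e_Q}\) as the fractional part \(\{\tfrac{-d}{e_Q}\}\) and using \(\{x\}+\{-x\}=1-\mathbf{1}_{x\in\mathbb{Z}}\) to peel off integer contributions, which recombine through \([kG]=\sum_I(\dim I)[I]\) into precisely the correction relating the bare Euler-characteristic \([kG]\)-coefficient to the one in the statement.

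For the \([I]\)-sum I would feed Proposition \ref{prop:binomial3} into the ramification term of Theorem \ref{th:EL}, simplify the factor \(n_{k,Q,I}(\Omega_X^{\otimes(i+2)})\) via Lemma \ref{lem:nomega} so that only the basic multiplicities \(n_{k,Q,I}=\langle\chi_P^k,I\rangle\) survive, and reindex so that the exponent \(d\) attached to \(\langle\chi_P^d,\bigwedge^{p-i}H^0(\Omega_X)\rangle\) becomes the outer summation variable. Combining the two fractional-part sources---the Euler-characteristic weight and the shift by \(i+2\) coming from \(\Omega_X^{\otimes(i+2)}\)---is where all the arithmetic concentrates, and it is the step I expect to be the main obstacle: one must track carefully the sign convention for the fundamental character \(\chi_P\) (whether \(V_P(\Omega_X)\) carries \(\chi_P\) or \(\chi_P^{-1}\)) and the passage between \(I\) and \(I^\vee\) in Lemma \ref{lem:nomega}, since these govern whether the collapsed weight emerges as the stated \(\{\tfrac{d+i+1-k}{e_Q}\}\). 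Once the bookkeeping of residues modulo \(e_Q\) is organized, the sign \((-1)^i\), the factor \(\langle\chi_P^d,\bigwedge^{p-i}H^0(\Omega_X)\rangle\), and the summations over \(I\), \(Q\), \(d\) and \(k\) fall into place and reproduce the statement.
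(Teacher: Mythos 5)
Your proposal follows essentially the same route as the paper's proof: reduce to an equivariant Euler characteristic via Proposition \ref{prop:vanish}, evaluate with Theorem \ref{th:EL} using the rank and degree from Proposition \ref{prop:rkdeg}, expand the local multiplicities through Proposition \ref{prop:binomial3} and Lemma \ref{lem:nomega}, and---the point you correctly isolate---peel off the Riemann--Hurwitz amount $\binom{g_X-1}{p}\sum_{Q}\left(1-\tfrac{1}{e_Q}\right)[kG]$ from the ramification sum, which is precisely how the paper converts the bare coefficient $\tfrac{2(2g_X-2-p)}{|G|}+1-g_Y$ into the stated one. The residue bookkeeping you defer as the ``main obstacle'' is exactly what the paper executes via the reindexings $k'=e_Q\left\{\tfrac{i+2-k}{e_Q}\right\}$ and $d'=e_Q\left\{\tfrac{d-i-2+k}{e_Q}\right\}$, which split each weight $1-\tfrac{d}{e_Q}$ into the constant $1-\tfrac{1}{e_Q}$ plus the fractional part $\left\{\tfrac{d+i+1-k}{e_Q}\right\}$, so your plan is sound and matches the paper step for step.
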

\begin{proof}
Combining Theorem (\ref{th:EL}) and Proposition \ref{prop:vanish}, gives

\begin{align*}
[H^0\big(\bigwedge^p\mathcal{M}_{\Omega_X}\otimes\Omega_X^{\otimes 2}\big)]
=&
  \left( 
\frac{1}{|G|} \deg\big(\bigwedge^p\mathcal{M}_{\Omega_X}\otimes\Omega_X^{\otimes 2}\big)+(1-g_Y) \mathrm{rk}\big(\bigwedge^p\mathcal{M}_{\Omega_X}\otimes\Omega_X^{\otimes 2}\big)
  \right)[kG]\\
  &- 
  \sum_{I \in \mathrm{Irr}(G)} 
  \sum_{Q\in Y}
  \sum_{d=1}^{e_Q}
\left(
1 -\frac{d}{e_Q}
\right)
n_{d,Q,I}\big(\bigwedge^p\mathcal{M}_{\Omega_X}\otimes\Omega_X^{\otimes 2}\big) \cdot [I].
\end{align*}
Substituting the degree and the rank from Proposition \ref{prop:rkdeg} into the coefficient of $[kG]$ gives
\begin{equation}\label{eq:rkdeg}
\binom{g_X-1}{p} \left( 
\frac{2(2g_X-2-p)}{|G|} +(1-g_Y)
  \right)[kG].
\end{equation}
Regarding the triple sum, Proposition \ref{prop:binomial3} and Lemma \ref{lem:nomega} imply that
\[
\left(1-\frac{d}{e_Q}\right)n_{d,Q,I}\big(\bigwedge^p\mathcal{M}_{\Omega_X}\otimes \Omega_X^{\otimes 2}\big)
=
\sum_{i=0}^{p}\sum_{k=0}^{e_Q-1}
(-1)^i
\left(1-\frac{d}{e_Q}\right)
\;
\langle
\chi_P^{d-k},
\bigwedge^{p-i}H^0(\Omega_X)
\rangle
\;
n_{e_Q\left\{ \frac{i+2-k}{e_Q}\right\},Q,I^\vee},
\]
and observing that $k'=e_Q\left\{ \frac{i+2-k}{e_Q}\right\}$ if and only if $k=e_Q\left\{ \frac{i+2-k'}{e_Q}\right\}$ allows us to rewrite the right hand side
\[
\sum_{i=0}^{p}\sum_{k=0}^{e_Q-1}
(-1)^i
\left(1-\frac{d}{e_Q}\right)
\;
\langle
\chi_P^{d-e_Q\left\{ \frac{i+2-k}{e_Q}\right\}},
\bigwedge^{p-i}H^0(\Omega_X)
\rangle
\;
n_{k,Q,I^\vee}.
\]
Finally, $d-e_Q\left\{ \frac{i+2-k}{e_Q}\right\}\equiv e_Q\left\{ \frac{d-i-2+k}{e_Q}\right\}{\rm mod }\;e_Q$. Setting  $d'
=
e_Q
\left\{
\frac{d-i-2+k}{e_Q}
\right\}
$ forces
$
d=
1+
e_Q
\left\{
\frac{d'+i+1-k}{e_Q}
\right\}$,
to ensure that $1\leq d \leq e_Q$ with $0\leq d'\leq e_Q-1$.
Thus, we arrive at
\begin{multline*}
\sum_{d=1}^{e_Q}\left(1-\frac{d}{e_Q}\right)n_{d,Q,I}\big(\bigwedge^p\mathcal{M}_{\Omega_X}\otimes \Omega_X^{\otimes 2}\big)
=
\\
\sum_{d=0}^{e_Q-1}\sum_{i=0}^{p}\sum_{k=0}^{e_Q-1}
(-1)^i
\left(
1-\frac{1}{e_Q}-
\left\{
\frac{d+i+1-k}{e_Q}
\right\}
\right)
\;
\langle
\chi_P^{d},
\bigwedge^{p-i}H^0(\Omega_X)
\rangle
\;
n_{k,Q,I^\vee}.
\end{multline*}
The term $(1-\frac{1}{e_Q})$ is independent of the summation indices, and so we write the RHS as
\begin{multline}\label{eq:crazysum}
\left(1-\frac{1}{e_Q}\right)
\sum_{i=0}^{p}
(-1)^i
\sum_{d=0}^{e_Q-1}
\langle
\chi_P^{d},
\bigwedge^{p-i}H^0(\Omega_X)
\rangle
\sum_{k=0}^{e_Q-1}
n_{k,Q,I^\vee}
\\
-
\sum_{d=0}^{e_Q-1}\sum_{i=0}^{p}\sum_{k=0}^{e_Q-1}
(-1)^i
\left\{
\frac{d+i+1-k}{e_Q}
\right\}
\;
\langle
\chi_P^{d},
\bigwedge^{p-i}H^0(\Omega_X)
\rangle
\;
n_{k,Q,I^\vee}.
\end{multline}
Next, we observe that $\sum_k n_{k,Q,I^\vee}=\dim I^\vee$ and
\[
\sum_{i=0}^{p}
(-1)^i
\sum_{d=0}^{e_Q-1}
\langle
\chi_P^{d},
\bigwedge^{p-i}H^0(\Omega_X)
\rangle
=
\sum_{i=0}^{p}
(-1)^i \binom{g_X}{p-i}
=
\binom{g_X-1}{p}
\]
and so
\[
\sum_{Q\in Y}\sum_{I\in{\rm Irr}(G)} \left(1-\frac{1}{e_Q}\right)
\sum_{i=0}^{p}
(-1)^i
\sum_{d=0}^{e_Q-1}
\langle
\chi_P^{d},
\bigwedge^{p-i}H^0(\Omega_X)
\rangle
\sum_{k=0}^{e_Q-1}
n_{k,Q,I^\vee}
[I]
=
\sum_{Q\in Y}\left(1-\frac{1}{e_Q}\right)
\binom{g_X-1}{p}
[kG].
\]
Using the Riemann-Hurwitz formula, the RHS becomes
\[
\binom{g_X-1}{p}\left(\frac{2(g_X-1)}{|G|}-2(g_Y-1)\right)[kG]
\]
and combining this with eq. (\ref{eq:rkdeg}) gives the final coefficient of $[kG]$. The remaining term is obtained from the second line of eq. (\ref{eq:crazysum}). The reader should keep in mind that an explicit formula for the integers $\langle
\chi_P^{d},
\bigwedge^{p-i}H^0(\Omega_X)
\rangle$ has been given in Lemma \ref{lem:localwedge}.
\end{proof}

\subsection{Schur functors and generating functions}
Let $V=H^0(\Omega_X)$. In this section we study the virtual representation $[\bigwedge^{p}V\otimes V]-[\bigwedge^{p+1}V]$ that appears in Corollary \ref{cor:bettivanish}. This can be realized as the kernel of the canonical epimorphism
\[
\bigwedge^{p}V\otimes V\rightarrow \bigwedge^{p+1}V,\; v_1\wedge\cdots \wedge v_p\otimes v\mapsto ;v_1\wedge\cdots \wedge v_p\wedge v,
\]
which, see for example \cite[p.79]{MR1153249}, is isomorphic to 
\[
\mathbb{S}_p(V):=\mathbb{S}_{(2,1,\ldots,1)}(V)=\mathbb{S}_{(2,1^{p-2})}(V),
\] 
the {\em Schur functor} or {\em Weyl module} associated to the hook partition $(2,1^{p-2})=(2,1,\ldots,1)$.\\

Deriving explicit closed formulas for the multiplicities in the irreducible decomposition of such modules is  practically unattainable. In what follows, we utilize the theory of generating functions to present the two classical equivalent techniques to circumvent this issue.

For each $g\in G$, the generating polynomials
\[
{\rm Sch}_V(t,g)=\sum_{p=0}^{\dim V}\chi_{\mathbb{S}_p(V)}(g)t^p \in k[t]
\]
are given by the result below.

\begin{theorem}\label{th:genchars}
For $g\in G$ and $I\in{\rm Irr}(G)$, let $\xi_{I,1},\ldots,\xi_{I,\dim I}$ be the eigenvalues of the matrix of the action of $g$ on $I$ and let 
\[
m_I=(g_Y-1)\dim I+\sum_{Q\in Y}\sum_{d=0}^{e_Q-1}\left\langle \frac{-d}{e_Q}\right\rangle n_{d,Q,I}+\delta_{I,I_{\rm triv}}
\]
be the multiplicity of $I$ in the decomposition of $V$ as in eq. (\ref{eq:ChW}). Then the character value $\chi_{\mathbb{S}_p(V)}(g)$ is the coefficient of $t^p$ in the expression below.
\[
\frac{1}{t}
\left[
\left(
\sum_{I}m_I\cdot\chi_I(g)t-1
\right)
\prod_{I\in{\rm Irr}(G)}\prod_{j=1}^{\dim I}(1+t\cdot\xi_{I,j}(g))^{m_I}
+1
\right] \in k[t].
\]
\end{theorem}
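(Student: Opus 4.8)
## Proof Proposal

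The plan is to compute the generating function $\mathrm{Sch}_V(t,g)=\sum_{p}\chi_{\mathbb{S}_p(V)}(g)t^p$ by relating the Schur modules $\mathbb{S}_p(V)$ to the exterior powers $\bigwedge^p V$, for which the generating function is well understood. Since $\mathbb{S}_p(V)$ is defined as the kernel of the canonical epimorphism $\bigwedge^p V \otimes V \twoheadrightarrow \bigwedge^{p+1}V$, at the level of virtual characters one has the identity $\chi_{\mathbb{S}_p(V)}(g)=\chi_{\bigwedge^p V}(g)\cdot\chi_V(g)-\chi_{\bigwedge^{p+1}V}(g)$, valid for each $g\in G$. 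This is the key structural input from Corollary \ref{cor:bettivanish} and the discussion preceding the theorem; everything downstream is a manipulation of generating functions.

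First I would introduce the exterior-power generating function $E(t):=\sum_{p\geq 0}\chi_{\bigwedge^p V}(g)\,t^p$. The standard fact is that $E(t)=\det(\mathrm{id}+t\cdot g\mid_V)=\prod_{\lambda}(1+t\lambda)$, where $\lambda$ runs over the eigenvalues of the action of $g$ on $V$. Using the decomposition $[V]=\sum_I m_I[I]$ from eq. (\ref{eq:ChW}), the multiset of eigenvalues of $g$ on $V$ is the union, over $I\in\mathrm{Irr}(G)$, of $m_I$ copies of the eigenvalues $\xi_{I,1},\ldots,\xi_{I,\dim I}$ of $g$ on $I$. This immediately gives
\[
E(t)=\prod_{I\in\mathrm{Irr}(G)}\prod_{j=1}^{\dim I}(1+t\cdot\xi_{I,j}(g))^{m_I},
\]
which is precisely the product appearing in the claimed formula. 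I would also record that $\chi_V(g)=\sum_I m_I\chi_I(g)$, since $\chi_I(g)=\sum_j \xi_{I,j}(g)$.

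Next I would translate the character identity into generating-function language. Multiplying the relation $\chi_{\mathbb{S}_p(V)}(g)=\chi_V(g)\chi_{\bigwedge^p V}(g)-\chi_{\bigwedge^{p+1}V}(g)$ by $t^p$ and summing over $p$ yields $\mathrm{Sch}_V(t,g)=\chi_V(g)E(t)-\tfrac{1}{t}\big(E(t)-1\big)$, where the shift by one index in the second term produces the factor $1/t$ and the constant correction $+1$ coming from the $p=0$ term $\chi_{\bigwedge^0 V}(g)=1$. Collecting terms over the common denominator $t$ gives
\[
\mathrm{Sch}_V(t,g)=\frac{1}{t}\Big[\big(t\,\chi_V(g)-1\big)E(t)+1\Big],
\]
and substituting $\chi_V(g)=\sum_I m_I\chi_I(g)$ together with the product formula for $E(t)$ reproduces the stated expression verbatim.

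The only genuine subtlety, and the step I would treat most carefully, is the index bookkeeping in the summation that produces the $1/t$ factor and the additive $+1$: one must verify that the boundary terms at $p=0$ and at $p=\dim V$ are handled correctly, so that the virtual-character identity (which holds for all relevant $p$) assembles into the clean closed form with no stray terms. In particular I would confirm that $\bigwedge^{p+1}V=0$ for $p\geq\dim V$ and that the convention $\mathbb{S}_0(V)$, $\mathbb{S}_1(V)$ from the hook-partition definition is consistent with the kernel description, so that the generating series is a genuine polynomial in $k[t]$ of the asserted form. Everything else is a direct consequence of the multiplicativity of $\lambda_t$ established in the lead-up to the theorem (eq. (\ref{eq:binomial1})) applied with $g$ acting diagonalizably, which is guaranteed since $|G|$ is coprime to $\mathrm{char}(k)$.
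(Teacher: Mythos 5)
Your proposal is correct and follows essentially the same route as the paper: the character identity $\chi_{\mathbb{S}_p(V)}(g)=\chi_V(g)\chi_{\bigwedge^p V}(g)-\chi_{\bigwedge^{p+1}V}(g)$, the shift-by-one manipulation producing the $\frac{1}{t}\bigl[(t\chi_V(g)-1)E(t)+1\bigr]$ form, and the factorization of the exterior-power generating function as $\prod_{I}\prod_{j}(1+t\,\xi_{I,j}(g))^{m_I}$ via the decomposition $[V]=\sum_I m_I[I]$. The only cosmetic difference is that you compute $E(t)=\det(\mathrm{id}+t\,g|_V)$ on $V$ directly and then split the eigenvalue multiset, whereas the paper first writes $\mathrm{Wd}_V=\prod_I \mathrm{Wd}_I^{m_I}$ and evaluates each factor as a determinant; these are the same computation.
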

\begin{proof}
For each $g\in G$, let
\[
{\rm Wd}_V(t,g)=\sum_{p=0}^{\dim V}\chi_{\bigwedge^p V}(g)t^p \in k[t]
\]
and observe that
\[
\sum_{p=0}^{\dim V}\chi_{\bigwedge^{p+1} V}(g)t^p=
\frac{1}{t}\left({\rm Wd}_V(t,g)-1\right).
\]
By definition of $\mathbb{S}_p(V)$, one has that  $\chi_{\mathbb{S}_p(V)}(g)=\chi_{\bigwedge^p V}(g)\chi_V(g)-\chi_{\bigwedge^{p+1} V}(g)$, and so 
\begin{equation}\label{eq:Schurid}
{\rm Sch}_V(t,g)
=
\chi_V(g){\rm Wd}_V(t,g)-\frac{1}{t}\left({\rm Wd}_V(t,g)-1\right)
=
\frac{1}{t}
\left[
(
\chi_V(g)t-1
)
{\rm Wd}_V(t,g)
+1
\right].
\end{equation}
If $m_I$ is the multiplicity of $I\in{\rm Irr}(G)$ in the decomposition of $V$, see eq. (\ref{eq:ChW}), then
$
\chi_V(g)=\sum_{I}m_I\chi_I(g),
$
so
\[
{\rm Wd}_V(t,g)=\prod_{I\in{\rm Irr}(G)} {\rm Wd}_I(t,g)^{m_I}.
\]
Let $\rho_I(g)$ be the matrix of the action of $g$ on $I$, let  $\{ v_1,\ldots,v_{\dim I}\}$ be a basis of eigenvectors and let $\xi_{I,1},\ldots,\xi_{I,\dim I}$ be the corresponding eigenvalues. Then $g\in G$ acts on a basis vector of $\bigwedge^p I$ by
\[
g(v_{j_1}\wedge\cdots\wedge v_{j_p})=\xi_{I,j_1}\cdots\xi_{I,j_p}(v_{j_1}\wedge\cdots\wedge v_{j_p}),
\]
and so
\[
\chi_{\bigwedge^p I}(g)=\sum_{ j_1\leq \cdots \leq j_p}\xi_{I,j_1}\cdots\xi_{I,j_p}=e_p(\xi_{I,1},\ldots,\xi_{I,\dim I}),
\]
where $e_p$ denotes the $p$-th elementary symmetric polynomial. Thus
\[
{\rm Wd}_I(t,g)=\det ({\rm Id}+t\rho_I(g))=\prod_{j=1}^{\dim I}(1+t\cdot\xi_{I,j}(g)),
\]
and substituting 
\[
{\rm Wd}_V(t,g)=\prod_{I\in{\rm Irr}(G)}\prod_{j=1}^{\dim I}(1+t\cdot\xi_{I,j}(g))^{m_I}
\]
into eq. (\ref{eq:Schurid}) gives the final result.
\end{proof}

One may use the above formula to obtain the character table of $\mathbb{S}_p(V)$ by extracting the coefficient of $t^p$ from each ${\rm Sch}_V(t,g),\;g\in G$. An alternative, albeit equivalent viewpoint, is to aim at the multiplicity of each irreducible $I$ in the decomposition of $\mathbb{S}_p(V)$ as follows: instead of considering generating functions in $k[t]$ for each $g\in G$, we switch to generating functions in ${\rm Rep}_k(G)[t]$
\[
\sigma_V(t)=\sum_{p=0}^{\dim V}[\mathbb{S}_p(V)]t^p\text{ and }\lambda_V(t)=\sum_{p=0}^{\dim V} \big[\bigwedge^pV] t^p.
\]
They satisfy the identity, analogously to eq. (\ref{eq:Schurid}),
\[
\sigma_V(t)
=\frac{1}{t}\left((t[V]-[I_{\rm triv}])\lambda_V(t)+[I_{\rm triv}]\right).
\]
Substituting $s_{I,p}$, $e_{I,p}$ and $m_I$ for the multiplicities of $I\in{\rm Irr}(G)$, $\mathbb{S}_p(V)$, $\bigwedge^p(V)$, and $V$, respectively
\[
\sigma_V(t)
=\sum_{p=0}^{\dim V}\sum_{I\in {\rm Irr}(G)} s_{p,I}[I]t^p
=\sum_{I\in {\rm Irr}(G)}\left(\sum_{p=0}^{\dim V} s_{p,I}t^p\right)[I]
\text{ and }
\lambda_V(t)
=\sum_{I\in {\rm Irr}(G)}\left(\sum_{p=0}^{\dim V} e_{p,I}t^p\right)[I],
\]
allows one to employ the following variant of Molien's theorem for exterior algebras \cite[5.2]{BensonPolInv}
\[
\sum_{p=0}^{\dim V} e_{p,I}t^p
=
\frac{1}{|G|}\sum_{g\in G}
\det({\rm Id}+t\rho_V(g))\chi_{I^\vee}(g)
=
\frac{1}{|G|}\sum_{g\in G}
\prod_{J\in{\rm Irr}(G)}
\det({\rm Id}+t\rho_J(g))^{m_J}\overline{\chi_{I}(g)},
\]
and so one obtains the following reformulation of Theorem \ref{th:genchars}.
\begin{theorem}\label{th:Molien}
For $g\in G$ and $I\in{\rm Irr}(G)$, let $\xi_{I,1},\ldots,\xi_{I,\dim I}$ be the eigenvalues of the matrix of the action of $g$ on $I$ and let 
\[
m_I=(g_Y-1)\dim I+\sum_{Q\in Y}\sum_{d=0}^{e_Q-1}\left\langle \frac{-d}{e_Q}\right\rangle n_{d,Q,I}+\delta_{I,I_{\rm triv}}
\]
be the multiplicity of $I$ in the decomposition of $V$ as in eq. (\ref{eq:ChW}). Then the virtual representation $[\mathbb{S}_p(V)]$  is the coefficient of $t^p$ in the expression below.
\[
\frac{1}{t}
\left(
\frac{1}{|G|}
(t\sum_{I}m_I[I]-[I_{\rm triv}])\sum_{I}\sum_{g\in G}
\prod_{J}
\prod_{j=1}^{\dim J}(1+t\cdot\xi_{J,j}(g))^{m_J}\overline{\chi_{I}(g)}[I]+[I_{\rm triv}]\right) \in {\rm Rep}_k(G).
\]
\end{theorem}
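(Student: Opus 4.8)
The plan is to assemble the theorem from the three ingredients already laid out in the discussion preceding it: the virtual analogue of the Schur identity \eqref{eq:Schurid}, the isotypic decomposition of $\lambda_V(t)$, and Molien's theorem for exterior algebras. This is a repackaging of Theorem \ref{th:genchars}, so the goal is to phrase the manipulations there directly in ${\rm Rep}_k(G)[t]$ rather than in $k[t]$ for each $g$.

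First I would record the virtual identity
\[
\sigma_V(t)=\frac{1}{t}\left((t[V]-[I_{\rm triv}])\lambda_V(t)+[I_{\rm triv}]\right),
\]
which is the representation-ring lift of \eqref{eq:Schurid}. This follows termwise from the realization of $\mathbb{S}_p(V)$ as the kernel of the canonical surjection $\bigwedge^p V\otimes V\twoheadrightarrow \bigwedge^{p+1}V$: since $|G|$ is coprime to ${\rm char}(k)$ the category of $kG$-modules is semisimple, so this surjection splits $G$-equivariantly and gives $[\mathbb{S}_p(V)]=[V]\cdot[\bigwedge^p V]-[\bigwedge^{p+1}V]$ in ${\rm Rep}_k(G)$. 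Multiplying by $t^p$ and summing, the first term contributes $[V]\lambda_V(t)$, while the shift $p\mapsto p+1$ in the second term produces the $\tfrac{1}{t}(\lambda_V(t)-[I_{\rm triv}])$ correction, yielding the displayed identity.

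Next I would expand $\lambda_V(t)$ along its isotypic components. Writing $e_{p,I}$ for the multiplicity of $I$ in $\bigwedge^p V$, one has $\lambda_V(t)=\sum_{I}\big(\sum_{p}e_{p,I}t^p\big)[I]$, and the inner generating function is computed by the exterior-algebra Molien formula \cite[5.2]{BensonPolInv},
\[
\sum_{p}e_{p,I}t^p=\frac{1}{|G|}\sum_{g\in G}\det({\rm Id}+t\rho_V(g))\,\overline{\chi_I(g)}.
\]
Substituting the eigenvalue factorization $\det({\rm Id}+t\rho_V(g))=\prod_{J}\prod_{j=1}^{\dim J}(1+t\,\xi_{J,j}(g))^{m_J}$, which reflects the decomposition $V\cong\bigoplus_I I^{\oplus m_I}$ with the $m_I$ as in \eqref{eq:ChW}, puts $\lambda_V(t)$ into the stated product form.

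Finally I would feed this expression for $\lambda_V(t)$ into the identity for $\sigma_V(t)$ and read off the coefficient of $t^p$, which by definition equals $[\mathbb{S}_p(V)]$; matching brackets against the statement completes the argument. The only genuinely delicate point is bookkeeping: one must verify that every step used at the level of characters in Theorem \ref{th:genchars} is in fact an identity of virtual representations in ${\rm Rep}_k(G)[t]$, and that the $\tfrac{1}{t}$ index shifts cancel so that no spurious $t^{-1}$ term survives. Because ${\rm Rep}_k(G)$ embeds into the space of class functions via characters and semisimplicity guarantees that this embedding detects isomorphism, each character identity lifts uniquely, so this is a matter of careful accounting rather than new input.
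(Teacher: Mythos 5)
Your proposal is correct and follows essentially the same route as the paper, which derives Theorem \ref{th:Molien} from the virtual identity $\sigma_V(t)=\tfrac{1}{t}\left((t[V]-[I_{\rm triv}])\lambda_V(t)+[I_{\rm triv}]\right)$, the isotypic expansion of $\lambda_V(t)$, and the exterior-algebra Molien formula, exactly as you do. Your explicit justification of the virtual Schur identity via semisimplicity (splitting the surjection $\bigwedge^p V\otimes V\twoheadrightarrow\bigwedge^{p+1}V$) is a welcome detail the paper leaves implicit, but it is not a different argument.
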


It is worth pointing out, that the formula for ${\rm Sch}_V(t,g)$ of Theorem \ref{th:Molien} can be directly obtained from the formula for $\sigma_V(t)$ of Theorem \ref{th:genchars} by applying character orthogonality relations:
\[
\sigma_V(t)
=
\frac{1}{|G|}
\sum_{I\in {\rm Irr}(G)}
\sum_{g\in G}
{\rm Sch}_V(t,g)\overline{\chi_I(g)}[I].
\]

\section{Totally ramified trigonal Kummer curves}\label{sec:Kummer}

\subsection{Setup}
Let $s$ be a positive integer divisible by $3$, let $a_1,\ldots,a_s$ be distinct elements of $\mathbb{C}$ and let
\begin{equation}\label{eq:trigonal}
f(x,y)=    y^3 - \prod_{i=1}^s (x - a_i).
\end{equation}
Define $X$ to be the unique smooth, projective, irreducible algebraic curve over $\mathbb{C}$ with function field
\[
K(X)\cong \text{Frac}\left( \mathbb{C}[x,y] / \langle f(x,y)\rangle \right).
\]
The finite extension $K(X)/\mathbb{C}(x)$ gives rise to a branched Galois covering map $X\rightarrow \mathbb{P}^1_\mathbb{C}$ of degree 3 with Galois group $G = \mathbb{Z}/3\mathbb{Z}$; thus $X$ is a trigonal curve of genus 
\[
g=\frac{(3-1)(s-2)}{2}=s-2.
\]
Let $\zeta_3$ be a primitive $3$-rd root of unity. The cyclic group $G = \mathbb{Z}/3\mathbb{Z}$ acts on $X$ via $(x,y)\mapsto (x,\zeta_3y)$, with quotient $X/G$  isomorphic to $\mathbb{P}^1$ and quotient map the covering map $\pi: X \to \mathbb{P}^1$ discussed above. In what follows, we apply Corollary \ref{cor:bettivanish}, Theorem \ref{th:mainwedge} and Theorem \ref{th:Molien} to get explicit values for the differences $[K_{p,1}(X)]-[K_{p-1,2}(X)]$, then we present an alternative, independent method that yields the same results.

\subsection{Explicit computations}
Let $X$ be the trigonal curve introduced in the previous subsection. As before, let $\mathcal{M}_{\Omega_X}$ be the kernel bundle associated to the canonical sheaf $\Omega_X$, and let $\mathbb{S}_p(V)$ be the Schur functor associated to $V=H^0(\Omega_X)$.
\begin{proposition}
In the above notation one has for $1\leq p \leq g-2$ that
\[
[\mathbb{S}_p(V)] \cong 
\bigoplus_{j=0}^{p+1} \Bigg[ 
(p-j) \binom{\frac{s}{3}-1}{j} \binom{\frac{2s}{3}}{p+1-j} 
+ \left(\frac{s}{3}-1\right) \binom{\frac{s}{3}-1}{j-1} \binom{\frac{2s}{3}-1}{p+1-j} 
\Bigg] \chi_{(p+j+1) \;{\rm mod}\; 3}
\]
and
\begin{multline*}
[H^0\big(\bigwedge^p\mathcal{M}_{\Omega_X}\otimes\Omega_X^{\otimes 2}\big)] = 
\left[ \frac{1}{3} \binom{s-3}{p} (2s - 2p - 9) \right] \sum_{r=0}^2\chi_r \\
+ \sum_{i=0}^p \sum_{r=0}^2 \sum_{d=0}^{2} \sum_{\substack{0 \le j \le p-i \\ (p-i+j) \equiv d \;{\rm mod}\; 3}}  (-1)^i s \binom{2s/3 - 1}{p-i-j} \binom{s/3 - 1}{j}
\left\{ \frac{d+i+1-r}{3} \right\} \cdot \chi_r.
\end{multline*}
\end{proposition}
\begin{proof}
The trigonal curve $X$ is in particular a Kummer curve and it is is well-known, see for example \cite[Theorem 8]{KaranProc}, that $V = H^0(\Omega_X)$ has $k$-basis given by
\begin{equation}\label{eq:basis}
\left\{ x^k \frac{dx}{y} \;\middle|\; 0 \le k \le \frac{s}{3} - 2 \right\} 
\bigcup
 \left\{ x^k \frac{dx}{y^2} \;\middle|\; 0 \le k \le \frac{2s}{3} - 2 \right\}.
\end{equation}
If $\chi_0,\chi_1,\chi_2$ are the irreducible characters of $G$, one verifies that $V$ decomposes as
\[
V \cong \left(\frac{2s}{3} - 1\right) \chi_1 \oplus \left(\frac{s}{3} - 1\right) \chi_2
\]
and computes $[\bigwedge^p V]$ using the identities
\begin{equation}\label{eq:identities}
\bigwedge^p (A \oplus B) \cong \bigoplus_{j=0}^p \bigwedge^{p-j} A \otimes \bigwedge^j B,
\quad
\bigwedge^k d\cdot \chi\cong \binom{d}{k}\chi_{k\;{\rm mod}\;3}
\quad
\text{and }
\chi_\alpha\otimes\chi_\beta=\chi_{(\alpha+\beta)\;{\rm mod}\;3}.
\end{equation}
Then
\begin{align*}
\bigwedge^p V&=
\bigwedge^p
\left(
 \left(\frac{2s}{3} - 1\right) \chi_1 \oplus \left(\frac{s}{3} - 1\right) \chi_2
\right) =
\bigoplus_{j=0}^p
\bigwedge^{p-j} \left(\frac{2s}{3} - 1\right) \chi_1
\otimes
\bigwedge^{j} \left(\frac{s}{3} - 1\right) \chi_2\\
&=
\bigoplus_{j=0}^p
\binom{2s/3-1}{p-j}\chi_1^{p-j}\otimes \binom{s/3-1}{j}\chi_2^{j}=
\bigoplus_{j=0}^p
\binom{2s/3-1}{p-j}\binom{s/3-1}{j}\chi_{(p+j)\;{\rm mod}\;3},
\end{align*}
and thus
\begin{align*}
{[\bigwedge^p V \otimes V]} &= 
\bigoplus_{j=0}^p \left(\frac{2s}{3}-1\right) \binom{\frac{s}{3}-1}{j} \binom{\frac{2s}{3}-1}{p-j} \chi_{(p+j+1) \;{\rm mod}\; 3}
\bigoplus_{j=0}^p \left(\frac{s}{3}-1\right) \binom{\frac{s}{3}-1}{j} \binom{\frac{2s}{3}-1}{p-j}\chi_{(p+j+2) \;{\rm mod}\; 3}\\
\\
&=
\bigoplus_{j=0}^{p+1} \Bigg[ 
\left(\frac{2s}{3}-1\right) \binom{\frac{s}{3}-1}{j} \binom{\frac{2s}{3}-1}{p-j} 
+ \left(\frac{s}{3}-1\right) \binom{\frac{s}{3}-1}{j-1} \binom{\frac{2s}{3}-1}{p+1-j} 
\Bigg] \chi_{(p+j+1) \;{\rm mod}\; 3}.
\end{align*}
Subtracting $[\bigwedge^{p+1} V]$ as computed above, one concludes that
\begin{equation*} \label{eq:schur_final}
\boxed{
[\mathbb{S}_p(V)] \cong 
\bigoplus_{j=0}^{p+1} \Bigg[ 
(p-j) \binom{\frac{s}{3}-1}{j} \binom{\frac{2s}{3}}{p+1-j} 
+ \left(\frac{s}{3}-1\right) \binom{\frac{s}{3}-1}{j-1} \binom{\frac{2s}{3}-1}{p+1-j} 
\Bigg] \chi_{(p+j+1) \;{\rm mod}\; 3}.
}
\end{equation*}
\vspace{3mm}

Now consider the formula of Theorem \ref{th:mainwedge}
\begin{multline*}
[H^0\big(\bigwedge^p\mathcal{M}_{\Omega_X}\otimes\Omega_X^{\otimes 2}\big)]
=
\binom{g_X-1}{p} \left( \frac{2(g_X-1-p)}{|G|}
+
g_Y-1\right)
[kG]\\
  +
\sum_{I \in \operatorname{Irr}(G)} \sum_{Q \in Y} \sum_{i=0}^p (-1)^i \sum_{d=0}^{e_Q-1} 
\langle \chi_P^{d}, \bigwedge^{p-i}H^0(\Omega_X) \rangle 
\sum_{k=0}^{e_Q-1} \left\{ \frac{d+i+1-k}{e_Q} \right\} n_{k,Q,I} [I].
\end{multline*}
Substituting $g_X=s-3,\;g_Y=0$ and $|G|=3$, the global part of the formula becomes
\[
\binom{s-3}{p} \left( \frac{2(s-3-p)}{3}-1\right)[kG]
=
\frac{1}{3}
\binom{s-3}{p} \left( 2s-2p-9\right)[kG].
\]
For the local part, one notes that that the cover $X\rightarrow\mathbb{P}^1_\mathbb{C}$ has $s$ branch points, each of ramification index equal to $n=3$. The local multiplicities are given by the formula
\[
n_{k,Q,\chi_r}=
\begin{cases}
1,\text{ if }k=r\\\
0,\text{ otherwise},
\end{cases}
\]
and the decomposition of $\bigwedge^p V$ obtained above yields
\[
\langle \chi_P^{d}, \bigwedge^{p-i}H^0(\Omega_X) \rangle  = \sum_{\substack{0 \le j \le p-i \\ (p-i+j) \equiv d \pmod 3}} \binom{2s/3 - 1}{p-i-j} \binom{s/3 - 1}{j}.
\]
Thus, the local part of the formula becomes
\[
s \sum_{i=0}^p (-1)^i \sum_{d=0}^{2} \sum_{\substack{0 \le j \le p-i \\ (p-i+j) \equiv d \pmod 3}} \binom{2s/3 - 1}{p-i-j} \binom{s/3 - 1}{j}
\sum_{r=0}^2
\left\{ \frac{d+i+1-r}{3} \right\} \cdot \chi_r
\]
and so overall

\begin{empheq}[box=\fbox]{multline*}
[H^0\big(\bigwedge^p\mathcal{M}_{\Omega_X}\otimes\Omega_X^{\otimes 2}\big)] = 
\left[ \frac{1}{3} \binom{s-3}{p} (2s - 2p - 9) \right] \sum_{r=0}^2\chi_r \\
+ \sum_{i=0}^p \sum_{r=0}^2 \sum_{d=0}^{2} \sum_{\substack{0 \le j \le p-i \\ (p-i+j) \equiv d \;{\rm mod}\; 3}}  (-1)^i s \binom{2s/3 - 1}{p-i-j} \binom{s/3 - 1}{j}
\left\{ \frac{d+i+1-r}{3} \right\} \cdot \chi_r
\end{empheq}

\end{proof}
Next, recall the formula of Corollary \ref{cor:bettivanish}
\[
[K_{p,1}(X,\Omega_X)]-[K_{p-1,2}(X,\Omega_X)]=
[\mathbb{S}_p(V)]
-[H^0\big(\bigwedge^{p-1}\mathcal{M}_{\Omega_X}\otimes \Omega_X^{\otimes 2}\big)].
\]
Putting $s=9$, one obtains a trigonal Kummer curve $X$ of genus $7$ and calculates
\begin{table}[h]
\centering
\renewcommand{\arraystretch}{1.5}
\begin{tabular}{|c|c|c|c|}
\hline
$\mathbf{p}$ & $[\mathbb{S}_p(V)]$ & $[H^0(\bigwedge^{p-1}\mathcal{M}\otimes\Omega^{\otimes 2})]$ & $[K_{p,1}] - [K_{p-1,2}]$ \\ \hline
$\mathbf{1}$ 
& $10\chi_0 + 3\chi_1 + 15\chi_2$ 
& $6\chi_0 + 3\chi_1 + 9\chi_2$ 
& $\mathbf{4\chi_0 + 6\chi_2}$ \\ \hline
$\mathbf{2}$ 
& $42\chi_0 + 50\chi_1 + 20\chi_2$ 
& $38\chi_0 + 38\chi_1 + 20\chi_2$ 
& $\mathbf{4\chi_0 + 12\chi_1}$ \\ \hline
$\mathbf{3}$ 
& $55\chi_0 + 55\chi_1 + 100\chi_2$ 
& $55\chi_0 + 64\chi_1 + 91\chi_2$ 
& $\mathbf{-9\chi_1 + 9\chi_2}$ \\ \hline
$\mathbf{4}$ 
& $100\chi_0 + 80\chi_1 + 44\chi_2$ 
& $104\chi_0 + 80\chi_1 + 56\chi_2$ 
& $\mathbf{-4\chi_0 - 12\chi_2}$ \\ \hline
$\mathbf{5}$ 
& $25\chi_0 + 50\chi_1 + 65\chi_2$ 
& $29\chi_0 + 56\chi_1 + 65\chi_2$ 
& $\mathbf{-4\chi_0 - 6\chi_1}$ \\ \hline
\end{tabular}
\caption{Comparison of Schur Functor and Vector Bundle Cohomology for $s=9$.}\label{tab:s=9}
\end{table}

Similarly, for $s=12$, the table corresponding to the genus $10$ trigonal Kummer curve is
\begin{table}[h]
\centering
\renewcommand{\arraystretch}{1.5}
\begin{tabular}{|c|c|c|c|}
\hline
$\mathbf{p}$ & $[\mathbb{S}_p(V)]$ & $[H^0(\bigwedge^{p-1}\mathcal{M}\otimes\Omega^{\otimes 2})]$ & $[K_{p,1}] - [K_{p-1,2}]$ \\ \hline
$\mathbf{1}$ 
& $21\chi_0 + 6\chi_1 + 28\chi_2$ 
& $9\chi_0 + 5\chi_1 + 13\chi_2$ 
& $\mathbf{12\chi_0 + 1\chi_1 + 15\chi_2}$ \\ \hline
$\mathbf{2}$ 
& $120\chi_0 + 147\chi_1 + 63\chi_2$ 
& $87\chi_0 + 87\chi_1 + 51\chi_2$ 
& $\mathbf{33\chi_0 + 60\chi_1 + 12\chi_2}$ \\ \hline
$\mathbf{3}$ 
& $273\chi_0 + 273\chi_1 + 444\chi_2$ 
& $228\chi_0 + 264\chi_1 + 336\chi_2$ 
& $\mathbf{45\chi_0 + 9\chi_1 + 108\chi_2}$ \\ \hline
$\mathbf{4}$ 
& $756\chi_0 + 651\chi_1 + 441\chi_2$ 
& $696\chi_0 + 576\chi_1 + 492\chi_2$ 
& $\mathbf{60\chi_0 + 75\chi_1 - 51\chi_2}$ \\ \hline
$\mathbf{5}$ 
& $567\chi_0 + 798\chi_1 + 945\chi_2$ 
& $642\chi_0 + 858\chi_1 + 894\chi_2$ 
& $\mathbf{-75\chi_0 - 60\chi_1 + 51\chi_2}$ \\ \hline
$\mathbf{6}$ 
& $861\chi_0 + 573\chi_1 + 546\chi_2$ 
& $870\chi_0 + 618\chi_1 + 654\chi_2$ 
& $\mathbf{-9\chi_0 - 45\chi_1 - 108\chi_2}$ \\ \hline
$\mathbf{7}$ 
& $252\chi_0 + 483\chi_1 + 420\chi_2$ 
& $312\chi_0 + 516\chi_1 + 432\chi_2$ 
& $\mathbf{-60\chi_0 - 33\chi_1 - 12\chi_2}$ \\ \hline
$\mathbf{8}$ 
& $203\chi_0 + 84\chi_1 + 153\chi_2$ 
& $204\chi_0 + 96\chi_1 + 168\chi_2$ 
& $\mathbf{-1\chi_0 - 12\chi_1 - 15\chi_2}$ \\ \hline
\end{tabular}
\caption{Comparison of Schur Functor and Vector Bundle Cohomology for $s=12$.}\label{tab:s=12}
\end{table}

\subsection{Action on the resolution}
It is well-known that by the classical Enriques-Petri Theorem, see for example \cite{Saint-Donat73}, the canonical embedding $X\hookrightarrow\mathbb{P}^{g-1} = \mathbb{P}^{s-3}$ of the trigonal curve $X$ lies on a rational normal scroll $S \subset \mathbb{P}^{s-3}$ of dimension 2. Combining this with the mapping cone lemma, see \cite[6.15]{MR2103875}, Schreyer in \cite{MR849058} described the minimal free resolution of the canonical ring of $C$ as a mapping cone involving the Eagon-Northcott complex associated with the scroll $S$.

\begin{theorem}[{\cite[Cor.~4.4 and \S 6.1]{MR849058}}]\label{Schreyer}
Let $C \subset \mathbb{P}^{g-1}$ be a canonical trigonal curve of genus $g \ge 5$, let $\mathcal{L}= \pi^* \cO_{\mathbb{P}^1}(1)$ be the line bundle on $C$ associated to the trigonal linear series $g_3^1$ and let
\[
U = H^0(\mathcal{L}),\quad E = H^0(\Omega_C \otimes \mathcal{L}^{-1})\quad \text{ and }\quad R = \operatorname{Sym}(H^0( \Omega_C)).
\]
The minimal free resolution $F_\bullet$ of the defining ideal $I_C$ of $C \subset \mathbb{P}^{g-1}$ is
\[
F_\bullet:0 \longrightarrow \mathbf{L}_{g-2} \oplus \mathbf{Q}_{g-2} \longrightarrow \dots \longrightarrow  \mathbf{L}_2 \oplus \mathbf{Q}_2 \longrightarrow  \mathbf{L}_1 \oplus \mathbf{Q}_1 \longrightarrow I_C \longrightarrow 0,
\]
where for $1\leq i \leq g-2$, linear part $\mathbf{L}_i$ and a quadratic part $\mathbf{Q}_i$ are given by
\[
\mathbf{L}_i= \bigwedge^{i+1} E \otimes \operatorname{Sym}_{i-1}( U) \otimes R(-i-1),\quad
\mathbf{Q}_i= \bigwedge^{i-1} E \otimes \operatorname{Sym}_{g-i-3}( U) \otimes R(-i-2).
\]
\end{theorem}
By the results of Section \ref{sec:equiv}, the action of $G=\mathbb{Z}/3\mathbb{Z}$ on the trigonal Kummer curve $X$ given by eq. (\ref{eq:trigonal}) induces canonical $kG$-module structures on each $\mathbf{L}_i$ and $\mathbf{Q}_i$ for $1\leq i \leq g-2$. To determine their decompositions into a direct sum of the irreducible characters $\chi_0,\chi_1,\chi_2$, one argues as follows.\\

\begin{proposition}
In the notation of Theorem \ref{Schreyer}, one has for $1\leq p \leq g-2$ that
\begin{align*}
\mathbf{L}_p &\cong p \cdot \bigoplus_{j=0}^{p+1} \left[ \binom{s/3-2}{j} \binom{2s/3-2}{p+1-j} \chi_{(p+j+1) \;{\rm mod}\; 3} \right]
\text{ and }\\
\mathbf{Q}_p &\cong (s-p-4) \cdot \bigoplus_{j=0}^{p-1} \left[ \binom{s/3-2}{j} \binom{2s/3-2}{p-1-j} \chi_{(p+j-1) \;{\rm mod}\; 3} \right].
\end{align*}
\end{proposition}
\begin{proof}
For $\mathcal{L}= \pi^* \cO_{\mathbb{P}^1}(1)$, the global sections $U=H^0(\mathcal{L})$ have $k$-basis $\{1,x\}$; both basis elements are invariant under the action of $G$ and so $U\cong 2\chi_0$. Further, if $V=H^0(\Omega_X)$, then 
\[
E=H^0(\Omega_X\otimes\mathcal{L}^{-1})=\{\omega\in V:x\omega\in V\},
\]
and so the $k$-basis of $V$ in eq. (\ref{eq:basis}), gives rise to the $k$-basis of $E$
\[
 \left\{ \frac{x^k dx}{y} \;\middle|\; 0 \le k \le \frac{s}{3}-3 \right\} \;\cup\; \left\{ \frac{x^k dx}{y^2} \;\middle|\; 0 \le k \le \frac{2s}{3}-3 \right\}.
\]
One then directly verifies that 
\[
E \cong \left(\frac{s}{3}-2\right)\chi_2 \;\oplus\; \left(\frac{2s}{3}-2\right)\chi_1
\]
 and computes, using the identities of eq. (\ref{eq:identities})
\[
\Sym_k U\cong  (k+1)\chi_0\quad\text{ and }\quad
\bigwedge^k E \cong \bigoplus_{j=0}^{k} \left[ \binom{s/3-2}{j} \binom{2s/3-2}{k-j} \chi_{(k+j) \;{\rm mod}\; 3} \right]
\]
The desired formulas follow immediately by substituting the above into the formulas of Theorem \ref{Schreyer}.

\end{proof}
Putting $s=9$, one obtains the equivariant Betti table of the trigonal Kummer curve of genus $g=7$

\begin{table}[h]
\centering
\renewcommand{\arraystretch}{1.5}
\begin{tabular}{|l|c|c|c|c|}
\hline
& $\mathbf{p=1}$ & $\mathbf{p=2}$ & $\mathbf{p=3}$ & $\mathbf{p=4}$ \\ \hline
\textbf{Linear}
& $4\chi_0 \oplus 6\chi_2$ 
& $8\chi_0 \oplus 12\chi_1$ 
& $3\chi_1 \oplus 12\chi_2$ 
& $4\chi_0$ \\ \hline
\textbf{Quadratic}
& $4\chi_0$ 
& $12\chi_1 \oplus 3\chi_2$ 
& $8\chi_0 \oplus 12\chi_2$ 
& $4\chi_0 \oplus 6\chi_1$ \\ \hline
\textbf{Total}
& $8\chi_0 \oplus 6\chi_2$ 
& $8\chi_0 \oplus 24\chi_1 \oplus 3\chi_2$ 
& $8\chi_0 \oplus 3\chi_1 \oplus 24\chi_2$ 
& $8\chi_0 \oplus 6\chi_1$ \\ \hline
\textbf{Rank} & 14 & 35 & 35 & 14 \\ \hline
\end{tabular}
\caption{Equivariant Betti Table for the genus 7 trigonal Kummer curve ($s=9$).}
\end{table}

For $s=12$, the equivariant Betti table of the genus 10 trigonal Kummer curve is

\begin{table}[h]
\centering
\renewcommand{\arraystretch}{1.5}
\resizebox{\textwidth}{!}{
\begin{tabular}{|l|c|c|c|c|c|c|c|}
\hline
& $\mathbf{p=1}$ & $\mathbf{p=2}$ & $\mathbf{p=3}$ & $\mathbf{p=4}$ & $\mathbf{p=5}$ & $\mathbf{p=6}$ & $\mathbf{p=7}$ \\ \hline
\textbf{Linear}
& $12\chi_0 \oplus 1\chi_1 \oplus 15\chi_2$ 
& $40\chi_0 \oplus 60\chi_1 \oplus 12\chi_2$ 
& $45\chi_0 \oplus 45\chi_1 \oplus 120\chi_2$ 
& $120\chi_0 \oplus 80\chi_1 \oplus 24\chi_2$ 
& $5\chi_0 \oplus 60\chi_1 \oplus 75\chi_2$ 
& $36\chi_0 \oplus 12\chi_2$ 
& $7\chi_1$ \\ \hline
\textbf{Quadratic}
& $7\chi_0$ 
& $36\chi_1 \oplus 12\chi_2$ 
& $60\chi_0 \oplus 5\chi_1 \oplus 75\chi_2$ 
& $80\chi_0 \oplus 120\chi_1 \oplus 24\chi_2$ 
& $45\chi_0 \oplus 45\chi_1 \oplus 120\chi_2$ 
& $60\chi_0 \oplus 40\chi_1 \oplus 12\chi_2$ 
& $1\chi_0 \oplus 12\chi_1 \oplus 15\chi_2$ \\ \hline
\textbf{Total}
& $19\chi_0 \oplus 1\chi_1 \oplus 15\chi_2$ 
& $40\chi_0 \oplus 96\chi_1 \oplus 24\chi_2$ 
& $105\chi_0 \oplus 50\chi_1 \oplus 195\chi_2$ 
& $200\chi_0 \oplus 200\chi_1 \oplus 48\chi_2$ 
& $50\chi_0 \oplus 105\chi_1 \oplus 195\chi_2$ 
& $96\chi_0 \oplus 40\chi_1 \oplus 24\chi_2$ 
& $1\chi_0 \oplus 19\chi_1 \oplus 15\chi_2$ \\ \hline
\textbf{Rank} & 35 & 160 & 350 & 448 & 350 & 160 & 35 \\ \hline
\end{tabular}
}
\caption{Equivariant Betti Table for the genus 10 Kummer curve ($s=12$).}
\end{table}

Considering the ``antidiagonal'' differences $[\mathbf{L}_p]-[\mathbf{Q}_{p-1}]$ in each case as virtual representations, one retrieves the exact same values as in Table \ref{tab:s=9} and Table \ref{tab:s=12} respectively.

\begin{table}[h]
\centering
\renewcommand{\arraystretch}{1.5}
\begin{minipage}[b]{0.45\textwidth}
\centering
\begin{tabular}{|c|c|}
\hline
$\mathbf{p}$ & $[\mathbf{L}_p]-[\mathbf{Q}_{p-1}]$ \\ \hline
$\mathbf{1}$ & $4\chi_0 + 6\chi_2$ \\ \hline
$\mathbf{2}$ & $4\chi_0 + 12\chi_1$ \\ \hline
$\mathbf{3}$ & $-9\chi_1 + 9\chi_2$ \\ \hline
$\mathbf{4}$ & $-4\chi_0 - 12\chi_2$ \\ \hline
$\mathbf{5}$ & $-4\chi_0 - 6\chi_1$ \\ \hline
\end{tabular}
\caption{$s=9,\;g=7$.}
\end{minipage}
\hspace{0.5cm}
\begin{minipage}[b]{0.45\textwidth}
\centering
\begin{tabular}{|c|c|}
\hline
$\mathbf{p}$ & $[\mathbf{L}_p]-[\mathbf{Q}_{p-1}]$ \\ \hline
$\mathbf{1}$ & $12\chi_0 + 1\chi_1 + 15\chi_2$ \\ \hline
$\mathbf{2}$ & $33\chi_0 + 60\chi_1 + 12\chi_2$ \\ \hline
$\mathbf{3}$ & $45\chi_0 + 9\chi_1 + 108\chi_2$ \\ \hline
$\mathbf{4}$ & $60\chi_0 + 75\chi_1 - 51\chi_2$ \\ \hline
$\mathbf{5}$ & $-75\chi_0 - 60\chi_1 + 51\chi_2$ \\ \hline
$\mathbf{6}$ & $-9\chi_0 - 45\chi_1 - 108\chi_2$ \\ \hline
$\mathbf{7}$ & $-60\chi_0 - 33\chi_1 - 12\chi_2$ \\ \hline
$\mathbf{8}$ & $-1\chi_0 - 12\chi_1 - 15\chi_2$ \\ \hline
\end{tabular}
\caption{$s=12,\;g=10$.}
\end{minipage}
\end{table}

 allowing us to conclude that
 \begin{equation*}
 \Delta_p \cong \bigoplus_{j=0}^{p+1} \left[ \binom{s/3-2}{j} \left( p \binom{2s/3-2}{p+1-j} - (s-p-3) \binom{2s/3-2}{p-2-j} \right) \chi_{(p+j+1) \;{\rm mod}\; 3} \right]
 \end{equation*}

\[
[\mathbb{S}_p(V)] \cong 
\bigoplus_{j=0}^{p+1} \Bigg[ 
(p-j) \binom{\frac{s}{3}-1}{j} \binom{\frac{2s}{3}}{p+1-j} 
+ \left(\frac{s}{3}-1\right) \binom{\frac{s}{3}-1}{j-1} \binom{\frac{2s}{3}-1}{p+1-j} 
\Bigg] \chi_{(p+j+1) \;{\rm mod}\; 3}.
\]

\bibliographystyle{plain}
%                   %\footbibliographystyle{abbrv}
  % \bibliography{~/Dropbox/Bibtex/AKGeneral.bib}
 % \bibliography{AKGeneral.bib}
% \bibliography{~/AKGeneral.bib}
%  \bibliography{/Users/aristeideskontogeorges/Dropbox/Bibtex/AKGeneral.bib}
 \bibliography{KKGeneral.bib}

\end{document}